    \documentclass[graybox]{SNmult}

   \usepackage{type1cm}        
    \usepackage{multicol}        
    \usepackage[bottom]{footmisc}

   \usepackage{newtxtext}       %
   \usepackage[varvw]{newtxmath}       
   \usepackage{makeidx}    
   \makeindex             

\usepackage[
  paper=a4paper,
  left=80pt,
  right=80pt,
  top=86pt,
  bottom=56pt,
  bindingoffset=0mm,
  includefoot,
  includehead,
  foot=16.5pt,
  head=16.5pt
]{geometry}

\usepackage{graphicx}                       
\usepackage{amsmath} 
\usepackage{amsthm}
\usepackage{nicefrac}
\definecolor{linkcolor}{HTML}{0D6A9E}
\definecolor{3blue}{HTML}{0072B2}
\definecolor{3green}{HTML}{009E73}
\definecolor{3ochre}{HTML}{E69F00}
\definecolor{3yellow}{HTML}{F0E442}
\definecolor{3cyan}{HTML}{56B4E9}
\definecolor{3red}{HTML}{D55E00}
\definecolor{3pink}{HTML}{CC79A7}
\definecolor{2blue}{HTML}{1A85FF}
\definecolor{2red}{HTML}{D41159}
\usepackage{graphicx}
\graphicspath{ {./gfx/} }
\usepackage{caption}
\usepackage{tensor}
\usepackage{tikz-cd} 
\tikzcdset{arrow style=tikz} 

\usepackage{hyperref}
\hypersetup{
	colorlinks=true,
	linktoc=all,
	linkcolor=linkcolor,
	citecolor=linkcolor,
	filecolor=magenta,      
	urlcolor=linkcolor,
	pdftitle={Chewing gums, snakes and candle cakes.},
	pdfpagemode=FullScreen,
}
\usepackage{cleveref}
\newtheoremstyle{komait}
{\topsep}   
{\topsep}   
{\itshape}  
{0pt}       
{\bfseries\sffamily} 
{.}         
{5pt plus 1pt minus 1pt} 
{}          
\newtheoremstyle{komanormal}
{\topsep}   
{\topsep}   
{\rmfamily}  
{0pt}       
{\bfseries\sffamily} 
{.}         
{5pt plus 1pt minus 1pt} 
{}          
\theoremstyle{komait}

\newcommand{\tcr}{\textcolor{red}}
\newcommand{\tcb}{\textcolor{blue}}

\newcommand{\PGL}{\mathsf{PGL}}

\newcommand{\integer}{\mathbb{Z}}

\newcommand{\real}{\mathbb{R}}

\newcommand{\im}{\text{Im}}
\newcommand{\opn}{\operatorname}

\begin{document}


\makeatletter
\renewcommand\normalsize{\@setfontsize\normalsize{11pt}{13.6pt}}
\makeatother
\normalsize


\title*{Chewing gums, snakes and candle cakes.}
\author{Benedetta Facciotti\orcidID{0009-0006-4111-3193}\\ Marta Mazzocco \orcidID{0000-0001-9917-2547}\\ Nikita Nikolaev\orcidID{0000-0002-7042-0479}}
\institute{Benedetta Facciotti \at UPC, Av. Dr. Marañón 44-50,  Barcelona, \email{benedetta.facciotti@upc.edu}
\and Marta Mazzocco \at ICREA and UPC, Av. Dr. Marañón 44-50, Barcelona, \email{marta.mazzocco@upc.edu} 
\and Nikita Nikolaev \at University of Birmingham, Watson Building,  Edgbaston, \email{n.nikolaev@bham.ac.uk}}
%
%
\maketitle

\abstract*{The aim of these lecture notes, based on lectures given by the second author at the CIME school in Cetraro, is to illustrate a range of ideas surrounding higher Teichm\"uller spaces of Riemann surfaces with marked boundaries through explicit and computationally tractable examples. After reviewing the classical Teichm\"uller space of hyperbolic Riemann surfaces with boundary and its combinatorial description in terms of Thurston shear coordinates on a fat-graph, we explain how the bordered cusped Teichm\"uller space arises as a confluent limit when two boundary components in the Riemann surface collide via the so-called chewing-gum move giving rise to a candle cake. We then revisit these constructions from the Fock–Goncharov perspective, explaining snake calculus for transport matrices in $\mathbb PSL_n(\mathbb R)$ and explain how the chewing gum move is the inverse of amalgamation. Rather than focusing on formal proofs, our goal is to illustrate the underlying theorems and constructions in a concrete and intuitive way.
\keywords{Riemann surfaces with marked boundary $\cdot$ Shear coordinates $\cdot$  Higher Teichm\"uller theory $\cdot$ Fock–Goncharov coordinates}}

\abstract{The aim of these lecture notes, based on lectures given by the second author at the CIME school in Cetraro, is to illustrate a range of ideas surrounding higher Teichm\"uller spaces of Riemann surfaces with marked boundaries through explicit and computationally tractable examples. After reviewing the classical Teichm\"uller space of hyperbolic Riemann surfaces with boundary and its combinatorial description in terms of Thurston shear coordinates on a fat-graph, we explain how the bordered cusped Teichm\"uller space arises as a confluent limit when two boundary components collide via a chewing-gum move. We then revisit these constructions from the Fock–Goncharov perspective, explaining snake calculus for transport matrices in $\mathbb PSL_n(\mathbb R)$ and recover the bordered cusped theory as the inverse of amalgamation. Rather than focusing on formal proofs, our goal is to illustrate the underlying theorems and constructions in a concrete and intuitive way.
\keywords{Riemann surfaces with marked boundary $\cdot$ Shear coordinates $\cdot$  Higher Teichm\"uller theory $\cdot$ Fock–Goncharov coordinates}}

\section{Introduction}

In these lecture notes a \textbf{surface with boundary} is a compact, connected, oriented, smooth real two-dimensional
manifold with finitely many
boundary components each of which is diffeomorphic to a circle. 

Given a surface with boundary 
$S$, we may ask ourselves in how many ways we can turn it into a Riemann surface with boundary (see beginning of \Cref{se:RS}). More precisely, in how many ways we can choose a pair $(X,f)$ where $X$ is a Riemann surface with boundary and $f: S\to X$ is an orientation preserving homeomorphism. The set of all these choices is called moduli space of surfaces with boundary. 
The \textbf{Teichm\"uller space} $\mathcal T(S)$ is the
moduli space of complex structures on $S$ modulo diffeomorphisms isotopic to the
identity. Namely, the points in the Teichm\"uller space are equivalence classes of pairs $(X,f)$, where two pairs $(X_1,f_1)$, $(X_2,f_2)$ are equivalent iff there exists a bi-holomorphic map
$$
h: X_1\to X_2, \quad \text{such that}\quad h\circ f_1= f_2.
$$
When $S$ has negative Euler characteristic, 
the Teichm\"uller space is equivalently defined as
$$
\mathcal T(S)=\opn{Hom}'(\pi_1(S),\mathbb PSL_2(\mathbb R))/\mathbb PSL_2(\mathbb R).
$$
where $\opn{Hom}'$ denotes 
the space of discrete faithful representations.

This manifestation of the Teichm\"uller space is particularly useful because it allows on one side a combinatorial coordinatization in terms of Thurston shear coordinates \cite{Penn1,Fock1,Kashaev,ChF1}, and, on the other side,
provides a link with a very rich generalization in which $\mathbb PSL_2(\mathbb R)$ is replaced by a simple real Lie group $G$ of
higher rank. Based on the work of many  mathematicians \cite{hit,lab,FockG,Gold,BIW},
in \cite{Wien}, \textbf{higher Teichm\"uller spaces} $\mathcal T(S,G)$ are defined as connected components of the
representation variety 
$$
\opn{Hom}'(\pi_1(S), G)/G
$$
consisting entirely of discrete and faithful representations. 

In these lecture notes, we discuss the combinatorial description of the Teichm\"uller space in the special case of the Riemann sphere with three boundaries $S_{0,3}$ and show how such combinatorial description links to the so called Fock Goncharov variables. We will then generalize this example by replacing the group $\mathbb PSL_2(\mathbb R)$ with $\mathbb PSL_3(\mathbb R)$, hence moving to the higher Teichm\"uller space. 

Another generalization of the notion of Teichm\"uller space is to allow marked points on the boundary of $S$. We call \textbf{surface with marked boundary} a pair $(S,P)$ where $S$ is a surface with boundary and
$P$ is a finite subset $P \subset \partial S$ of marked points, defined up to isotopy on the boundary $\partial S$ - this means
marked points can slide continuously along the boundary without crossing each other. In  \cite{ChM}, we introduced the notion of \textbf{bordered cusped Teichm\"uller space}
$$
\opn{Hom}'(\pi_1(S,P), \mathbb PSL_2(\mathbb R))/U_P,
$$
where $U_P:=\sqcup_{p\in P} U_p$, $U_p$ being unipotent radicals of Borel subgroups in $\mathbb PSL_2(\mathbb R)$. We showed that the bordered cusped Teichm\"uller space arises
as a \textit{confluent} version of the standard Teichm\"uller space when two boundaries collide giving rise to a merged boundary with marked points on it.
In these notes we will show how this confluence works when colliding two holes in a Riemann sphere with three boundaries $S_{0,3}$ to produce a cylinder with two marked points on one boundary. 

The bordered cusped Teichm\"uller space admits a generalization to $\mathbb P SL_n(\mathbb C)$, which was introduced in \cite{CMR,CMR1} and called \textbf{decorated character variety}. This is defined as a connected component consisting entirely of discrete and faithful representations of
$$
\opn{Hom}'(\pi_1(S,P), \mathbb P SL_n(\mathbb C))/U_P,
$$
where $U_P:=\sqcup_{p\in P} U_p$, $U_p$ being unipotent  radicals of Borel subgroups in $\mathbb P SL_n(\mathbb C)$.
In \cite{FMN}, we showed that the coordinate ring of the decorated character variety
can be identified with the \textbf{moduli space
	of pinnings} introduced by Goncharov and Shen \cite{GS}. 

In these lecture notes, we will explain this result in the case of the cylinder with two marked points on one boundary for the groups $\mathbb P SL_2(\mathbb C)$ and $\mathbb P SL_3(\mathbb C)$.

These lecture notes use an intentionally informal style to make the ideas easier to follow. Likewise, in all of the next sections, references are chosen for their clarity of presentation rather than for being the original sources of the results. We have also designed examples to make the text more accessible, and we encourage readers who find the theoretical sections challenging to look at the examples first and then return to the theory.

\vskip 1cm\noindent\textbf{Acknowledgments.} The authors thank Leonya Chekhov and Davide Dal Martello for many helpful discussions. Sections \ref{se:teich} and \ref{se:confl} are based on joint work by M.M. with L. Chekhov \cite{ChM} and with V. Rubtsov \cite{CMR}. Section \ref{se:FG} was developed by all three authors jointly and is mostly based on the papers of Fock Goncharov and Shen \cite{FockG}, \cite{GS} and on their interpretation as presented in \cite{DMM} (in particular, subsections \ref{suse:proj-bases} and \ref{suse:calc} in these notes are  taken from subsection 3.1.1 of \cite{DMM}).
This work was funded by 
the Leverhulme Trust Research Project Grant RPG-2021-047 and by the Proyecto de Generaci\'on de Conocimiento, PID2024-155686NB-I00 of the Spanish Ministry of Science, Innovation and Universities.

\section{Riemann surfaces with boundary}\label{se:RS}

In what follows, we will denote by $\mathbb H$ and $\partial \mathbb H$ the hyperbolic plane and its boundary in $\mathbb P^1$ respectively, i.e.
\begin{align*}
	\mathbb H &:= \{z \in \mathbb C| \operatorname{Re}(z) > 0\},\\
	\partial \mathbb H &:= \{z \in\mathbb C|\operatorname{Im}(z) = 0\} \cup \{ \infty \}.
\end{align*}
The upper half plane $\mathbb H$  is endowed with the hyperbolic metric (see Appendix \ref{app:hyp-geo}).

A Riemann surface is a one dimensional holomorphic manifold and, for the purpose of these lecture notes, a Riemann surface with boundary is a Riemann surface from which we have removed a finite disjoint union of open discs. More formally,
a Riemann surface with boundary, $\Sigma$ is a pair $(S, (U_i,\phi_i)_{i\in I})$ where $S$ is a connected, oriented, smooth 2-manifold with boundary $\partial S,$ and $(U_i,\phi_i)_{i\in I}$ is a collection of holomorphically compatible charts on $S$ of two types
\begin{itemize}
	\item an interior chart is a bijection $\phi_i:U_i\to V_i$, where 
	$U_i$ is a  set in the interior of $S$, and
	$V_i$ is an open subset of $\mathbb C$;
	\item a boundary chart is a bijection
	$\phi_i:U_i\to V_i$, where $U_i$ is a  set in $S$ and $V_i$ is an open set in $\overline{\mathbb H}\subset\mathbb C$ in the subset topology.
\end{itemize}
The complex structure on $\Sigma$ is by definition the one given by the charts $(U_i,\phi_i)_{i\in I}$.

Let us denote by $\Sigma_{g,s}$ a Riemann surface of genus $g$ with $s$ boundaries. 
Given a point $x_0\in\Sigma$, the universal cover $\widetilde\Sigma_{g,s}$ of $\Sigma_{g,s}$ based at $x_0$ is 
the set of homotopy classes of paths with starting point $x_0$. This set is automatically a simply connected 
space with a covering map 
$p:\widetilde\Sigma_{g,s}\to \Sigma_{g,s}$ defined as 
$p([\gamma])=\gamma[1]$. Thanks to the covering map, one can identify homotopy classes in 
$\widetilde\Sigma_{g,s}$ with their end points, hence $\widetilde\Sigma_{g,s}$ is endowed with a Riemann surface structure.

The covering map $p$ coincides with the quotient map $\widetilde\Sigma_{g,s}\to \widetilde\Sigma_{g,s}\slash_{\textrm{Deck}(\Sigma_{g,s})}$, 
where ${\textrm{Deck}}(\Sigma_{g,s})$ is the group of deck transformations. 
The group ${\textrm{Deck}}(\Sigma_{g,s})$  acts freely and properly discontinuously, and is isomorphic to the fundamental group of $\Sigma_{g,s}$  
\begin{equation}
	\label{eq:fund-g}
	\pi_1(\Sigma_{g,s},x_0)=\langle a_j,b_j,c_k | a_1b_1a_1^{-1} b_1^{-1} \dots  a_g b_g a_g^{-1} b_g^{-1}c_1 \dots c_s\simeq x_0\rangle_{1\leq i\leq g, \, 1\leq k\leq s},
\end{equation}
with $\simeq x_0$ denoting homotopy equivalence to a loop that is contractible to the point $x_0$.

The uniformization theorem states that every simply connected Riemann surface is conformally equivalent\footnote{Two Riemann surfaces
	$\Sigma_{g,s}$ and $\Sigma'_{g',s'}$, are conformally equivalent if there exists a bi-holomorphic map
	$
	f:\Sigma_{g,s}\to \Sigma_{g,s}'$,
	that locally preserves angles.} to one of three Riemann surfaces: the upper half plane $\mathbb H$, the complex plane $\mathbb C$, or the Riemann sphere $\mathbb P^1$. Hyperbolic Riemann surfaces are those whose universal cover is conformally equivalent to $\mathbb H$. As a consequence, a hyperbolic Riemann surface is conformally equivalent to
the quotient of the free and holomorphic action of a discrete group $\Delta_{g,s}\subset\mathbb P SL(2,\mathbb R)$ isomorphic to  $\pi_1(\Sigma_{g,s},x_0)$ on $\mathbb H$:
\begin{equation}\label{eq:unif}
	\Sigma_{g,s}\sim \mathbb H\slash{\Delta_{g,s}}.
\end{equation}

Thanks to the conformal equivalence, we can pull back the metric of $\mathbb H$ to $\Sigma_{g,s}$, therefore a hyperbolic Riemann surface admits a (unique!) Riemannian metric of constant negative curvature $-1$. Note that due to this, the boundaries of the Riemann surfaces will be infinitely distant from it, because the absolute $\partial\mathbb H$ is infinitely distant from $\mathbb H$. 

In conclusion, we may think of the Teichm\"uller space as the set of all possible choices for the group ${\Delta_{g,s}}$ that give rise to the same topological surface. Further reading on the Teichm\"uller space as the space of metrics of constant curvature modulo diffeomorphisms isotopic to the identity can be found in chapter V, section 4 of \cite{Nash}.

Let us illustrate these ideas in the example of a Riemann sphere with three boundaries $S_{0,3}$. We will need to use a few facts about the upper half plane $\mathbb H$, its metric and the action of $\mathbb PSL_2(\mathbb R)$ on it. We have collected these useful facts in Appendix \ref{app:hyp-geo}. We will denote by  $\gamma$ a generic matrix in $SL_2(\mathbb R)$ and by $\gamma(z)$ the corresponding fractional linear transformation:
$$
\gamma=\left(\begin{array}{cc}
	a & b \\
	c& d
\end{array}\right),\quad \gamma(z):=\frac{a z+b}{c z+ d}, \quad a d - b c=1.
$$

\begin{example}\label{ex:3holedsphere}
	Consider the following three hyperbolic elements in $\mathbb PSL_2(\mathbb R)$
	$$
	\gamma_1(z):= \frac{z}{3}-2,\quad 
	\gamma_2(z):=\frac{6 z}{1+3 z},\quad
	\gamma_3(z):=-\frac{ z+2}{3 z+4},
	$$
	then $\gamma_1\circ\gamma_2\circ \gamma_3 (z)=z,$ so that the group $\Delta_{0,3}:=\langle\gamma_1,\gamma_2,\gamma_3| \gamma_1\circ\gamma_2\circ \gamma_3 =\operatorname{id}\rangle$ is isomorphic to the fundamental group of $\Sigma_{0,3}$, the Riemann sphere with three open discs removed.
	
	Let us study the action of
	$\Delta_{0,3}$ on $\mathbb H$ in order to understand the quotient $ \mathbb H\slash_{\Delta_{0,3}}$. For each $\gamma_i$, $i=1,2,3$ we denote by $q^{(i)}_1,q^{(i)}_2$ its fixed points:
	$$
	q^{(1)}_1=\infty, \, q^{(1)}_2=-3,\qquad
	q^{(2)}_1=0,\, q^{(2)}_2=\frac{5}{3},\qquad
	q^{(3)}_1=-1,\, q^{(3)}_2=-\frac{2}{3}
	$$
	In \Cref{fig:inv-axis} we show the invariant axes of the three generators of $\Delta_{0,3}$.
	\begin{figure}[!htb]
		\centering
		\includegraphics[width=0.7\textwidth]{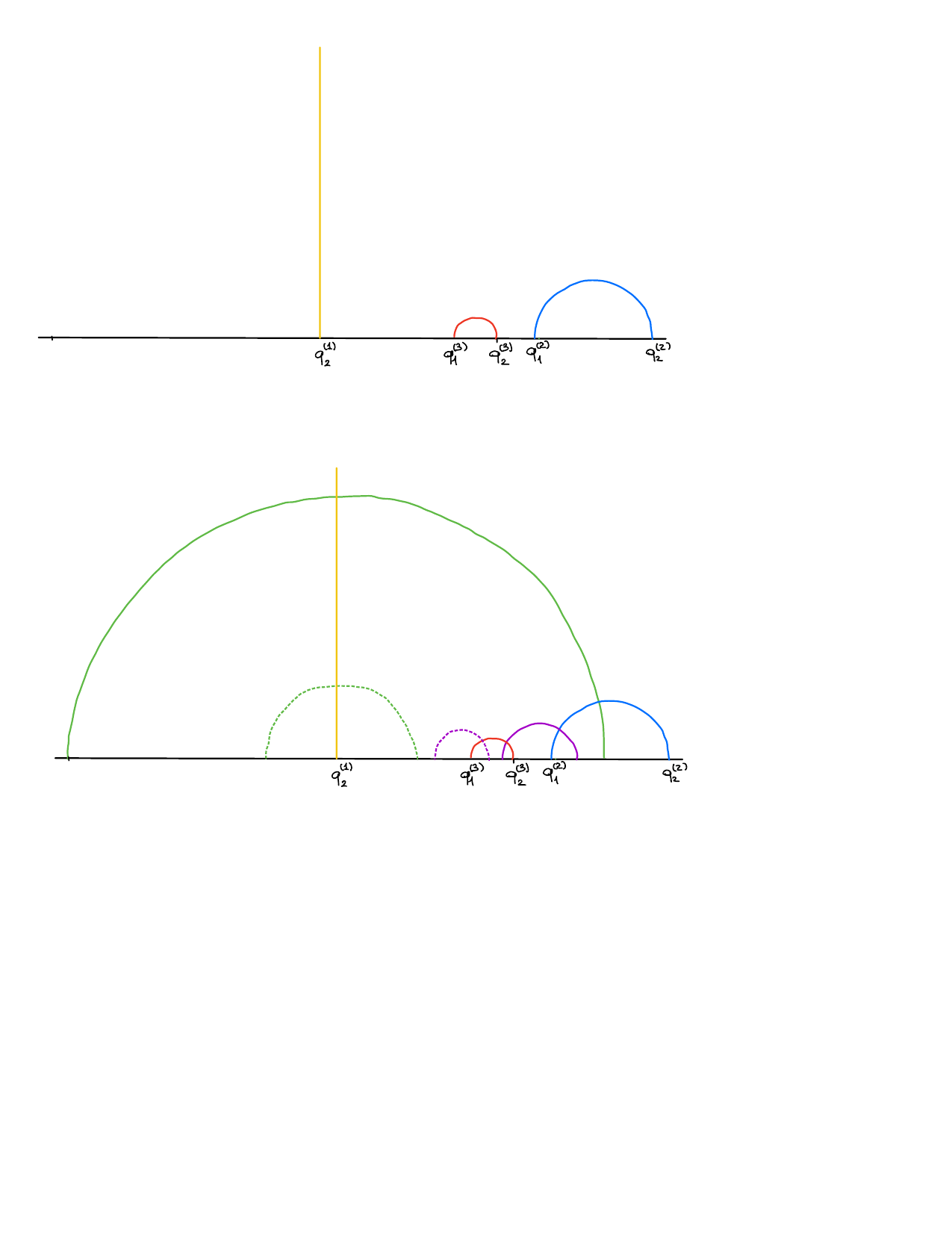}
		\caption{The invariant axis of $\gamma_1$ in yellow, of $\gamma_2$ in blue and of $\gamma_3$ in red.}\label{fig:inv-axis}
	\end{figure}
	
	We now want to draw the fundamental domain of the group $\Delta_{0,3}$. To this aim, we select the unique geodesic $g_{12}$ orthogonal to the $\gamma_1$ and the $\gamma_2$ invariant axes and its image under $\gamma_1$, and the unique geodesic $g_{23}$ orthogonal to the $\gamma_2$ and the $\gamma_3$ invariant axis and its image under $\gamma_3^{-1}$. These geodesics are displayed in \Cref{fig:all-geo}.
	
	\begin{figure}[!htb]
		\centering
		\includegraphics[width=0.8\textwidth]{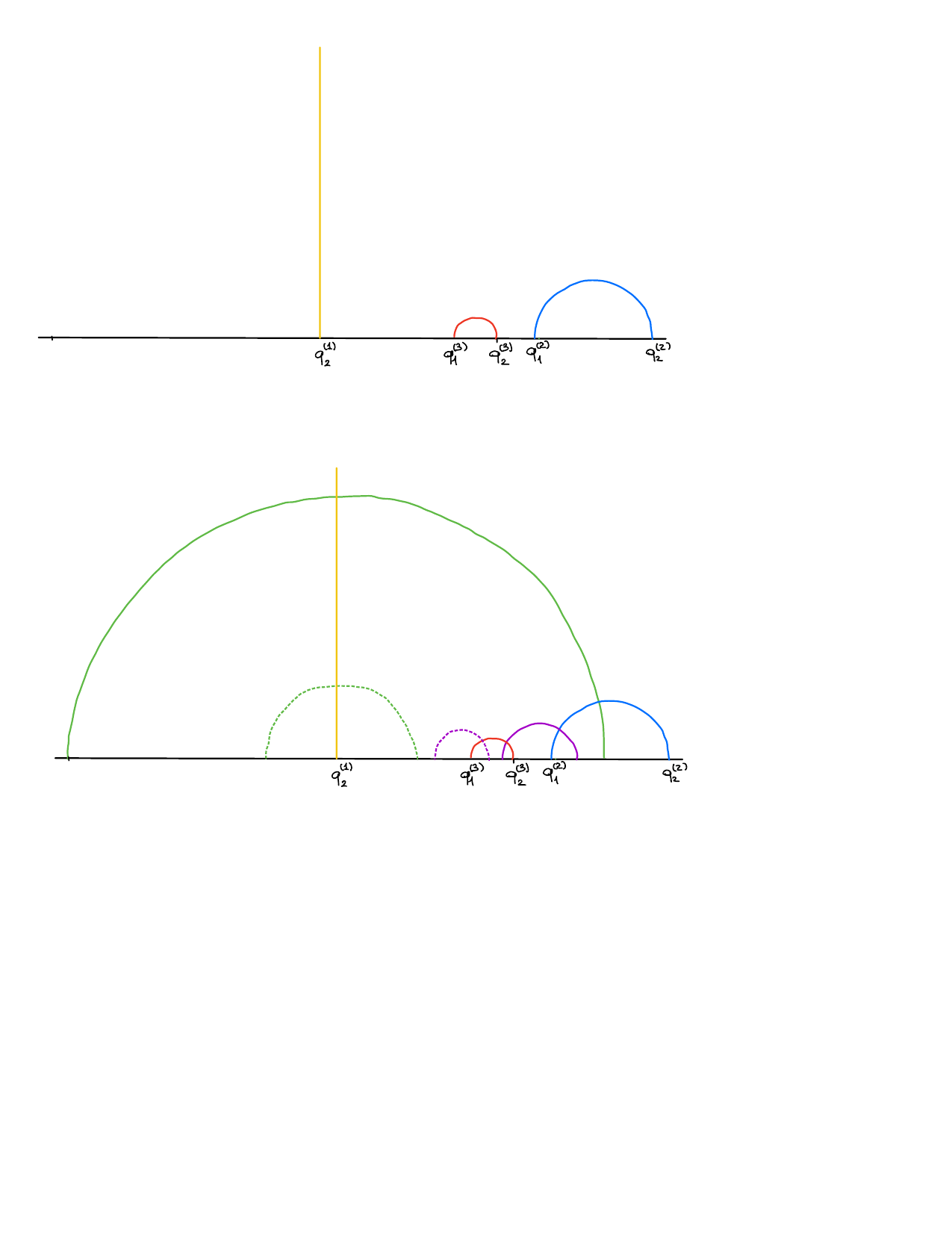}
		\caption{The geodesic $g_{12}$ is drawn in solid green, its image under $\gamma_1$ in dashed green. The geodesic  $g_{23}$ in solid purple, its image under $\gamma_3^{-1}$ in dashed purple.}\label{fig:all-geo}
	\end{figure}
	
	In order to draw the fundamental domain, we only need to consider two of the generators as the third one is determined by the relation $\gamma_1\circ\gamma_2\circ \gamma_3 =\operatorname{id}$. For example, we choose $\gamma_1,\gamma_3$ and set $\gamma_2= \gamma_1^{-1}\circ\gamma_3^{-1}$. Observe that the fundamental domain of $\gamma_1$ is contained in the strip between the two green geodesics in \Cref{fig:all-geo}, while the one of $\gamma_3$ is the whole portion of $\mathbb H$ outside the two purple geodesics (see Examples \ref{ex:gamma1} and \ref{ex:gamma2} in the Appendix). Intersecting these two fundamental domains we obtain the fundamental domain of $\Delta_{0,3}$, which is displayed in \Cref{fig:fun-dom.03}.
	\begin{figure}[!htb]
		\centering
		\includegraphics[width=0.8\textwidth]{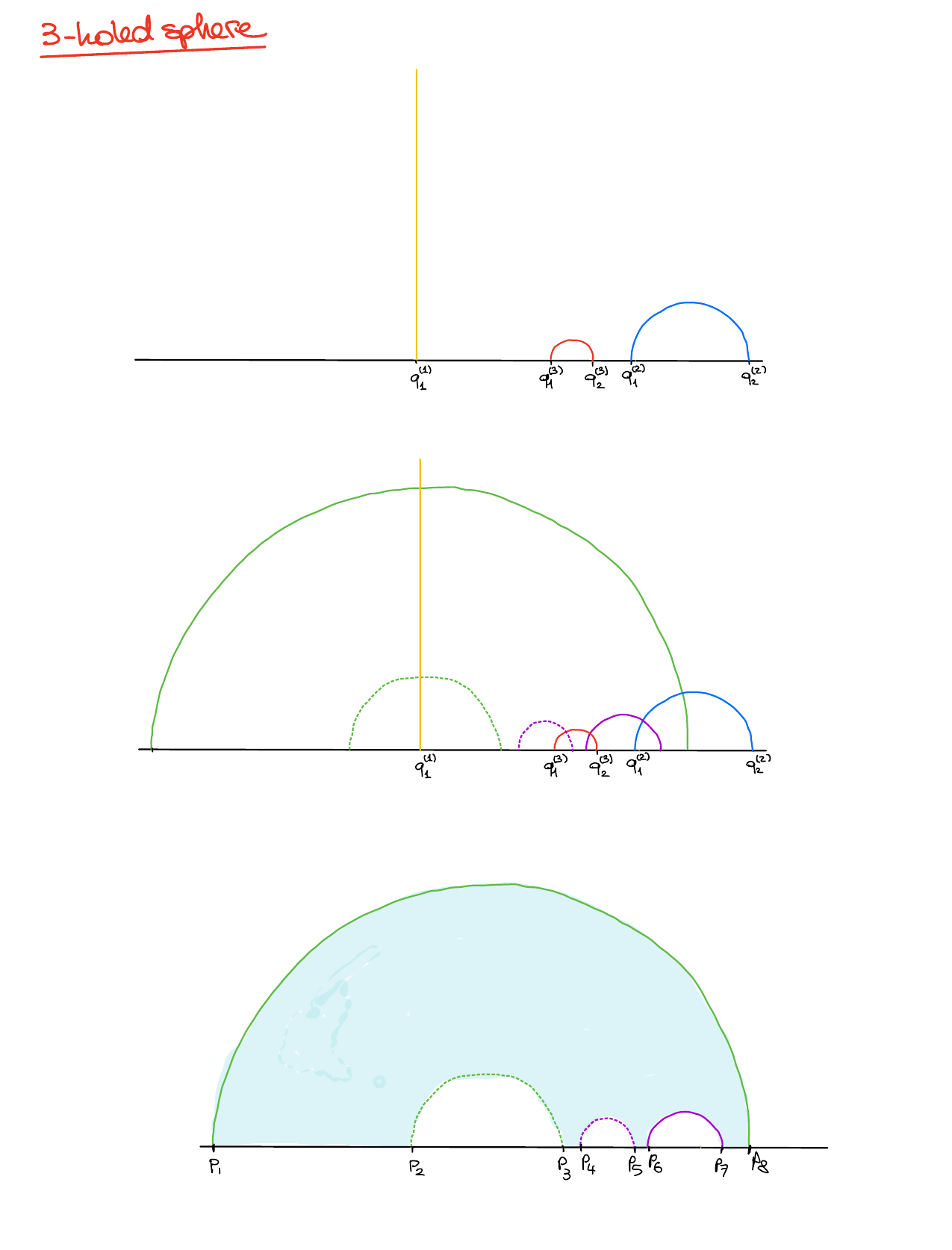}
		\caption{The fundamental domain  of $\Delta_{0,3}$ is the region filled in light blue.}\label{fig:fun-dom.03}
	\end{figure}
	
	Let us now perform the quotient $ \mathbb H\slash{\Delta_{0,3}}$. Act on the fundamental domain displayed in \Cref{fig:fun-dom.03} by $\gamma_1$. This identifies the two green geodesics producing a double funnel in which one side has two wedges removed, see \Cref{fig:double-funnel}.
	\begin{figure}[!htb]
		\centering
		\includegraphics[width=0.7\textwidth]{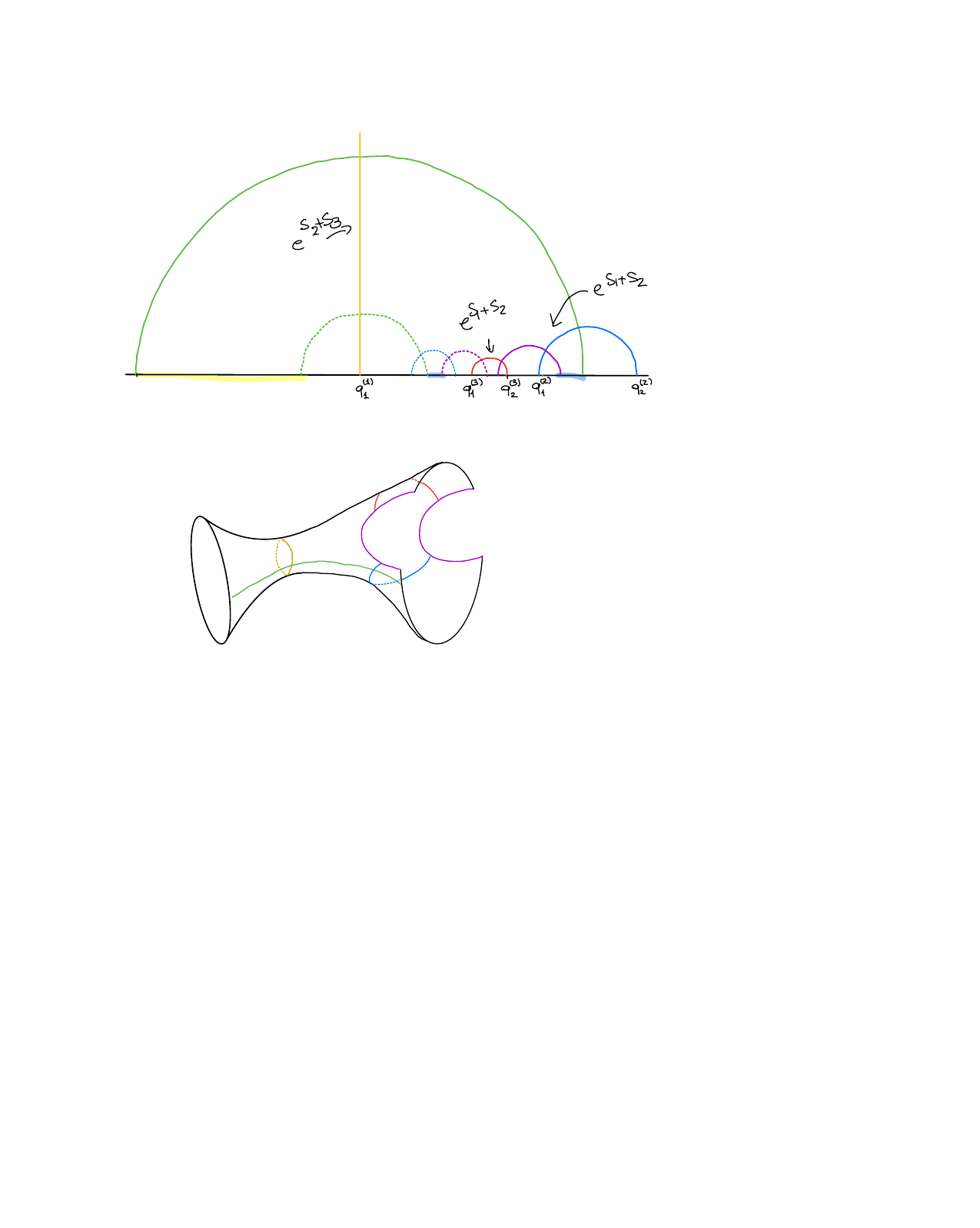}
		\caption{Gluing the fundamental domain along the green geodesic produces a double funnel with two wedges removed (the areas under the two purple geodesics). In this figure we have also drawn the invariant axis of $\gamma_3$ in red, the one of $\gamma_1$ in yellow and the one of $\gamma_2$ in blue.}\label{fig:double-funnel}
	\end{figure}
	
	Act on the double funnel displayed in \Cref{fig:double-funnel} by $\gamma_3$. This identifies the two purple wedges and produces the three-holed sphere in  \Cref{fig:3holed-sphere}.
	\begin{figure}[!htb]
		\centering
		\includegraphics[width=0.6\textwidth]{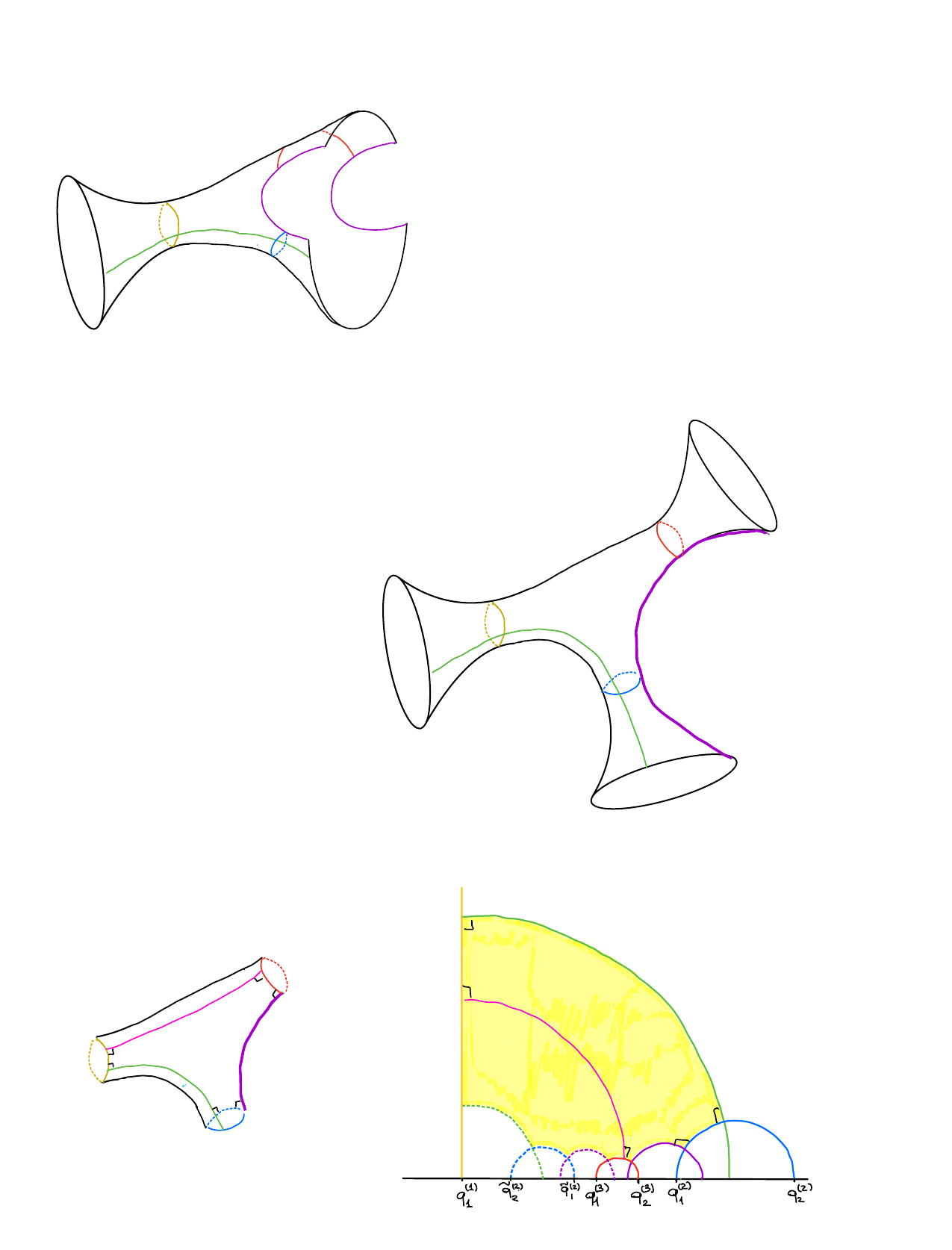}
		\caption{The three holed sphere as quotient of $\mathbb H$ by $\Delta_{0,3}$. The invariant axis of each generator of  $\Delta_{0,3}$ gives a closed geodesic which is in the same conjugacy class as the corresponding hole. The green and purple geodesics correspond to the identified boundaries under the quotient.}\label{fig:3holed-sphere}
	\end{figure}
\end{example}

The above example highlights a few facts that are valid in general. The first fact is that in order to obtain the Riemann sphere with three funnels we considered a group generated by two \textit{hyperbolic elements.}\/ This is because we want the group $\Delta_{0,3}$ to act freely. In order 
for the action of a discretely generated subgroup of $\mathbb PSL_2(\mathbb R)$ to be free, all elements in the group must be either parabolic or hyperbolic (elliptic elements have fixed points in the interior of $\mathbb H$). In general the following result holds:

\begin{lemma}
	Given a Riemann surface of genus $g$ with $s$ boundaries  $\Sigma_{g,s}$, any group $\Delta_{g,s}\subset PSL_2(\mathbb R)$ that is isomorphic to $\operatorname{Deck}(\Sigma_{g,s})$ must be generated by $2 g+s$ hyperbolic or parabolic elements.
\end{lemma}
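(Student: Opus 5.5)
The plan has two independent parts: first show that every nontrivial element of $\Delta_{g,s}$ is hyperbolic or parabolic, then count the generators. Both parts use only the presentation \eqref{eq:fund-g} and the uniformization statement \eqref{eq:unif}.

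\textbf{Step 1 (no elliptic elements).} Since $\Delta_{g,s}\simeq\operatorname{Deck}(\Sigma_{g,s})$, it acts freely and properly discontinuously on the universal cover $\widetilde\Sigma_{g,s}\simeq\mathbb H$. Recall the trichotomy from Appendix \ref{app:hyp-geo}: a nontrivial $\gamma\in\mathbb PSL_2(\mathbb R)$ is elliptic, parabolic or hyperbolic according as $|\operatorname{tr}\gamma|<2$, $=2$ or $>2$. An elliptic element has a fixed point in $\mathbb H$, since the quadratic $cz^2+(d-a)z-b=0$ then has non-real conjugate roots and one of them lies in $\mathbb H$. This contradicts freeness. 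Hence every nontrivial element of $\Delta_{g,s}$, and in particular every generator, is hyperbolic or parabolic.

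\textbf{Step 2 (counting generators).} From \eqref{eq:fund-g}, $\pi_1(\Sigma_{g,s},x_0)$ is generated by the $2g+s$ elements $a_j,b_j,c_k$, subject to one relation. The isomorphism $\pi_1\simeq\operatorname{Deck}\simeq\Delta_{g,s}$ sends these to $2g+s$ generators of $\Delta_{g,s}$. The relation lets us eliminate one of them, for example $c_s$, when $s\ge1$. This is exactly what we did in Example \ref{ex:3holedsphere}, where $\gamma_2=\gamma_1^{-1}\circ\gamma_3^{-1}$. I would read the statement as referring to this natural generating set of $2g+s$ elements, one per handle generator and one per boundary loop, tied together by a single relation. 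I would also remark that for $s\ge1$ the group is free of rank $2g+s-1$; this can be shown by computing the abelianization, or equivalently by noting that $\Sigma_{g,s}$ deformation retracts onto a wedge of $2g+s-1$ circles.

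\textbf{Step 3 (which generators are which).} One can refine the count by looking at the boundary generators $c_k$. If $\Sigma_{g,s}$ is obtained by removing open discs, as in the paper's definition, each $c_k$ is freely homotopic to a boundary curve. It is then hyperbolic, with its invariant axis projecting to the closed geodesic in that class (compare \Cref{fig:3holed-sphere}), or parabolic when the hole degenerates to a puncture. The $a_j,b_j$ are hyperbolic because they are represented by non-peripheral closed geodesics.

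The main obstacle is interpretive rather than technical. As literally phrased, ``must be generated by $2g+s$ elements'' is ambiguous, since the minimal number of generators is $2g+s-1$ when $s\ge1$. I would therefore state clearly that the count refers to the standard generating set in \eqref{eq:fund-g}. The substantive content of the lemma is then Step 1, the absence of elliptic elements, which follows directly from the freeness of the deck action.
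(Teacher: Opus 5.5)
Your proposal is correct and follows the paper's own justification. The paper gives no separate proof; it argues in the remark just before the lemma that a free action excludes elliptic elements, since these fix a point of $\mathbb H$ (Lemma \ref{lm:hyp-par-el}), and it reads the count $2g+s$ off the presentation \eqref{eq:fund-g}. Your Steps 1 and 2 are exactly this. Step 3 and the rank-$(2g+s-1)$ remark are harmless additions, although the abelianization alone only gives the rank, not freeness, whereas eliminating $c_s$ from \eqref{eq:fund-g} gives both. Freeness comes from $\Delta_{g,s}$ being the uniformizing group in \eqref{eq:unif}, not from an abstract isomorphism, and this is also how the paper reads the statement.
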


Another interesting fact that emerged from the study of example \ref{ex:3holedsphere} is that each hyperbolic element $\gamma_i$ fixes a unique geodesic, its invariant axis. The portion of the invariant axis contained in the fundamental domain is called \textbf{bottleneck geodesic}\/ and has finite length $l_{\gamma_i}$. This length characterizes the conjugacy class of $\gamma_i$ as proven in the following:

\begin{lemma}\label{lm:conj-cl}
	The conjugacy classes of hyperbolic elements $\gamma\in\mathbb PSL(2,\mathbb R)$ are in one to one correspondence with 
	closed geodesics of finite length. In particular
	\begin{equation}\label{eq:length-tr}
		e^{\frac{l_\gamma}{2}}+e^{-\frac{l_\gamma}{2}}=\mathrm{Tr}(\gamma).
	\end{equation}
	Moreover, if $x_1,x_2$ denote the fixed points of $\gamma$, then
	\begin{equation}\label{eq:length-cr}
		e^{{l_\gamma}}= \operatorname{cr}(\gamma(z),z;x_1,x_2).
	\end{equation}
\end{lemma}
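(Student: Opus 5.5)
The plan is to diagonalize: every hyperbolic element is conjugate in $\mathbb PSL_2(\mathbb R)$ to a dilation, and trace, cross-ratio and the length of the invariant geodesic are all invariant under conjugation (the last because $\mathbb PSL_2(\mathbb R)$ acts by isometries, Appendix \ref{app:hyp-geo}).

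\textbf{Normal form.} Let $\gamma$ be hyperbolic, with real fixed points $x_1\neq x_2$ on $\partial\mathbb H$. I choose $h\in\mathbb PSL_2(\mathbb R)$ sending $x_1\mapsto 0$ and $x_2\mapsto\infty$, adjusting orientation if necessary. Then $h\gamma h^{-1}$ fixes $0$ and $\infty$, so it is diagonal:
$$\operatorname{diag}(\lambda,\lambda^{-1}),\qquad z\mapsto\lambda^2 z,\qquad \lambda>1$$
(replacing $\lambda$ by $\lambda^{-1}$ and swapping $x_1,x_2$ if needed). Its invariant axis is the imaginary axis. A point $iy$ is moved to $i\lambda^2 y$, at hyperbolic distance $\int_y^{\lambda^2y}dt/t=2\log\lambda$. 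This translation length is what the statement calls $l_\gamma$, the length of the bottleneck geodesic in the quotient $\mathbb H/\langle\gamma\rangle$. Hence $\lambda=e^{l_\gamma/2}$ and $\mathrm{Tr}(\gamma)=\lambda+\lambda^{-1}$, which is \eqref{eq:length-tr} (with the sign of the trace fixed by choosing the positive lift in $\mathbb PSL_2$).

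\textbf{Bijection.} Conjugate hyperbolic elements have the same trace, and by the normal form two hyperbolic elements with the same $|\mathrm{Tr}|$ are both conjugate to the same dilation. So the conjugacy class is determined by $|\mathrm{Tr}|>2$, equivalently by $l_\gamma>0$. Geometrically, the class corresponds to the closed geodesic of length $l_\gamma$ obtained as the image of the axis in $\mathbb H/\langle\gamma\rangle$. Conversely, a closed geodesic of length $l$ arises from a hyperbolic deck transformation translating along a lift of the geodesic by $l$, and is well defined up to conjugacy. I read the statement's "one to one correspondence" in this sense (classes $\leftrightarrow$ lengths $l\in(0,\infty)$, i.e.\ closed geodesics up to isometry), and I would state that interpretation explicitly.

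\textbf{Cross-ratio.} Cross-ratios are invariant under Möbius maps, so I may compute in the normal form. The convention for $\operatorname{cr}$ should be the one in Appendix \ref{app:hyp-geo}; I would use $\operatorname{cr}(a,b;c,d)=\frac{(a-c)(b-d)}{(a-d)(b-c)}$. With $x_1=0$, $x_2=\infty$, $a=\lambda^2z$ and $b=z$, the factors involving $\infty$ cancel and we get $\operatorname{cr}=\lambda^2 z/z=\lambda^2=e^{l_\gamma}$. Note that this is independent of $z$. The only real obstacle is bookkeeping: matching the cross-ratio convention and the ordering of $x_1,x_2$ (attracting versus repelling), since the wrong ordering gives $e^{-l_\gamma}$. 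I would fix the ordering so that $x_1$ is the repelling fixed point, which makes $\lambda>1$ in the computation above.
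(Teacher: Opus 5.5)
Your proposal is correct and follows essentially the same route as the paper. Both arguments conjugate $\gamma$ to the dilation $z\mapsto\lambda^2 z$ and read off $l_\gamma=2\log\lambda$ from the segment between $iy$ and $i\lambda^2 y$, so that $\mathrm{Tr}(\gamma)=\lambda+\lambda^{-1}$; they get the converse by moving a segment to the imaginary axis and taking the dilation that maps one endpoint to the other, and they get the cross-ratio identity from the normal form with a fixed point at $\infty$ (the paper's \eqref{eq:conf-rel1}). Your explicit choices are a genuine improvement on the paper's sketch: taking $x_1$ to be the repelling fixed point and using the positive lift in $\mathbb PSL_2(\mathbb R)$ fix the signs, which the paper leaves implicit, since otherwise one could get $e^{-l_\gamma}$ in the cross-ratio and a negative trace.
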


\begin{proof}
	Given a conjugacy class of hyperbolic element $[\gamma]$, pick the diagonal representative with elements $a,\frac{1}{a}$, $a\in\mathbb R$. Then this acts as a dilation $\gamma(z) = a^2 z$ in $\mathbb H$. As seen in Example \ref{ex:gamma2}, the action of this dilation maps a geodesic centered at $0$ with radius $r$ to geodesics centered at $0$ and with radius $ a^2 r$. The only invariant geodesic is then a vertical segment between $i r$ and $i a^2 r$. The length of this geodesic is $l_\gamma=\ln{a^2}$, therefore proving \eqref{eq:length-tr}. 
	
	Vice-versa, given any geodesic segment of finite length, we can bring it to a vertical one by the action of $\mathbb PSL_2(\mathbb R)$. Then we choose a dilation that keeps the vertical geodesic containing the segment fixed and that maps one extrema to the other. This dilation is unique up to inversion.
	
	The proof of \eqref{eq:length-cr} follows from \eqref{eq:conf-rel1} in the Appendix.
\end{proof}

In Example \ref{ex:3holedsphere}, we saw that for each infinitely far away hole there is a unique closed geodesic in the same conjugacy class of the given hole. 
This is also a general fact which we don't prove:

\begin{lemma}\label{lm:conj-cl-pi1}
	The conjugacy classes in $\pi_1(\Sigma_{g,s})$ are in one to one correspondence with 
	closed geodesics of finite length.
\end{lemma}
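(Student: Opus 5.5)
Since the authors state \Cref{lm:conj-cl-pi1} without proof, here is the argument I would give. It reduces to \Cref{lm:conj-cl} through the uniformization \eqref{eq:unif}, $\Sigma_{g,s}\sim\mathbb H\slash\Delta_{g,s}$, with $\Delta_{g,s}\simeq\pi_1(\Sigma_{g,s},x_0)$.

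The statement has to be read with the usual provisos. The identity class and the peripheral classes of parabolic elements correspond to no closed geodesic. Closed geodesics are taken unoriented up to the inversion $\gamma\leftrightarrow\gamma^{-1}$, exactly as in the proof of \Cref{lm:conj-cl}. A class $[\gamma^k]$ with $k>1$ corresponds to the $k$-fold traversal of the geodesic of $[\gamma]$. I would therefore prove the statement for nontrivial \emph{hyperbolic} conjugacy classes in $\Delta_{g,s}$ and free homotopy classes of closed curves, and state the other cases as remarks.

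\textbf{Construction of the map.} First, conjugacy classes in $\pi_1(\Sigma_{g,s},x_0)$ are exactly free homotopy classes of loops, since changing the base point along a path conjugates the element. Given a hyperbolic $\gamma\in\Delta_{g,s}$, let $A_\gamma\subset\mathbb H$ be its invariant axis, the geodesic joining its two fixed points. Pick $z\in A_\gamma$; the geodesic segment $[z,\gamma(z)]\subset A_\gamma$ has length $l_\gamma$ by \Cref{lm:conj-cl}. Its image under $p$ is a closed geodesic of length $l_\gamma$, and it is freely homotopic to the loop represented by $\gamma$, because any path from $z$ to $\gamma(z)$ projects to a loop in the class of $\gamma$. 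If $\gamma'=h\gamma h^{-1}$ with $h\in\Delta_{g,s}$, then $A_{\gamma'}=h(A_\gamma)$, so $A_{\gamma'}$ projects to the same closed geodesic. Hence the map is well defined on conjugacy classes.

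\textbf{Surjectivity.} Conversely, a closed geodesic $c$ in $\Sigma_{g,s}$ lifts to a complete geodesic $\tilde c\subset\mathbb H$. The deck transformation $\gamma$ corresponding to going once around $c$ preserves $\tilde c$ and translates along it by the length of $c$. It is therefore a hyperbolic element with $A_\gamma=\tilde c$, and it recovers $c$ under the construction above.

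\textbf{Injectivity (main obstacle).} Suppose $\gamma_1,\gamma_2$ are hyperbolic and their axes project to the same closed geodesic. Then there is a deck transformation $h$ with $h(A_{\gamma_1})=A_{\gamma_2}$, so $h\gamma_1h^{-1}$ and $\gamma_2$ share an axis. Two elements of a discrete group acting freely with a common axis lie in the stabiliser of that axis, which is infinite cyclic. Since both elements translate by the primitive length, they agree up to inversion. The remaining point is uniqueness of the geodesic in a free homotopy class: two distinct closed geodesics in the same class would lift, after conjugating, to two distinct geodesics in $\mathbb H$ invariant under the same $\gamma$. This is impossible because a hyperbolic element has exactly two fixed points on $\partial\mathbb H$, hence a unique invariant geodesic. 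I expect this to be the delicate step. In particular, one must make precise that a free homotopy between closed curves lifts to a $\gamma$-equivariant homotopy in $\mathbb H$, which forces the lifts to have the same endpoints $x_1,x_2$ on $\partial\mathbb H$. I would handle this using the fact that a homotopy moves points a bounded hyperbolic distance: on the compact core of $\Sigma_{g,s}$ the homotopy is uniformly bounded, so the two lifts stay a bounded distance apart and must share endpoints at infinity.
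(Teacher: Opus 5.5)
The paper gives no proof of this lemma; it is stated as a general fact the authors do not prove. So there is no argument to compare with. Your route is the standard one and it is correct. It uses the axis map $[\gamma]\mapsto p(A_\gamma)$, uniqueness of the invariant axis of a hyperbolic element, the fact that the stabiliser of an axis in a freely acting discrete group is infinite cyclic, and the deck transformation obtained by lifting one traversal of a closed geodesic. The provisos you impose are genuinely needed: as printed, the statement fails for the trivial class and for parabolic classes, and it says nothing about orientation or multiplicity. What your approach adds is a self-contained argument from the uniformization $\mathbb H\slash\Delta_{g,s}$ and \Cref{lm:conj-cl}.

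The step you single out as delicate is not actually needed. After well-definedness, surjectivity and your cyclic-stabiliser argument, the map $[\gamma]\mapsto p(A_\gamma)$ is already a bijection. Uniqueness of the closed geodesic in a free homotopy class also follows from your surjectivity paragraph, without lifting any homotopy:

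1. Let $c,c'$ be closed geodesics in the same free homotopy class.
2. The deck transformations $\gamma_c,\gamma_{c'}$ obtained by lifting one traversal of each represent that class, so $\gamma_{c'}=h\gamma_c h^{-1}$ for some $h\in\Delta_{g,s}$.
3. Then $\tilde c$ and $h^{-1}(\tilde c')$ are both geodesics invariant under $\gamma_c$.
4. They coincide because a hyperbolic element has a unique invariant geodesic, so $c=c'$.

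If you do keep the equivariant-homotopy route, your boundedness justification is imprecise. Boundedness of the homotopy in $\Sigma_{g,s}$, or on its compact core, does not control distances between the lifts in $\mathbb H$. What works is that $s\mapsto d_{\mathbb H}(\tilde H(s,0),\tilde H(s,1))$ is continuous and periodic under the shift corresponding to $\gamma$, hence bounded.

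There is also a small slip in the injectivity paragraph. ``Both elements translate by the primitive length'' should read ``both translate by the same length''. Under your convention that $[\gamma^k]$ corresponds to a $k$-fold traversal, the elements need not be primitive. What you actually use is that in the stabiliser $\langle\delta\rangle$, elements $\delta^a,\delta^b$ with equal translation length satisfy $b=\pm a$.
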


Thanks to Lemma \ref{lm:conj-cl-pi1}, we can associate a unique 
bottleneck geodesic to each boundary component  in $\Sigma_{g,s}$, and thanks to  Lemma \ref{lm:conj-cl}, a unique conjugacy class of an element in $\Delta_{g,s}$.  A boundary component is called a \textit{hole} if its  corresponding element in $\Delta_{g,s}$ is hyperbolic and a \textit{puncture} if it is parabolic. In the latter case the length $l_\gamma$ defined in \eqref{eq:length-tr} is $0$.
\section{Teichm\"uller space $\mathcal T(\Sigma_{g,s})$}
\label{se:teich}

\subsection{Ideal triangulations}\label{suse:idtr}

The hyperbolic area of a Riemann surface $\Sigma_{g,s}$ with boundary is  infinite. This corresponds to the fact that when the Fuchsian group $\Delta_{g,s}$ contains hyperbolic elements, it has a fundamental domain that contains segments on the absolute (see Examples \ref{ex:gamma1} and \ref{ex:gamma2}). We would instead like to have domains in $\mathbb H$ that can be triangulated by a finite number of ideal triangles (and therefore have finite hyperbolic area as explained in the Appendix). To achieve this, we consider the finite part $\Sigma_{g,s}^f$ of the Riemann surface, which is an open set obtained by removing from  $\Sigma_{g,s}$ all infinite funnels at the bottleneck geodesics. 
Correspondingly, in $\mathbb H$, we remove from the fundamental domain all parts between the absolute and the images of the bottleneck geodesics. Note that by doing so, we will obtain a region $R$ in $\mathbb H$ that still satisfies one of the properties of a fundamental domain, namely $\gamma(R)\cap R = \emptyset\, \forall \gamma\in G\setminus\{\operatorname{id}\}$, but no longer 
covers all of $\mathbb H$, namely it is no longer true that $\forall z\in\mathbb H, \, \exists \gamma\in G$ such that $\gamma(z)\in R$.
In this section we are going to see that despite this, the region $R$ still contains valuable information that we will use to give a combinatorial description of the Teichm\"uller space.

\begin{example}\label{ex:id-tr}
	We now chop off all infinite funnels from the three holed sphere of Example \ref{ex:3holedsphere} to obtain  a pair of pants which has geodesic boundaries given by the three bottle neck geodesics (see left hand side of \Cref{fig:pair-of-pants}). In $\mathbb H$ (see right hand side of \Cref{fig:pair-of-pants}), the portion of the fundamental domain bounded by the bottle-neck geodesics is an octagon $O$ of finite area $2\pi$ (see \eqref{eq:GB}).
	
	\begin{figure}[!htb]
		\centering
		\includegraphics[width=0.9\textwidth]{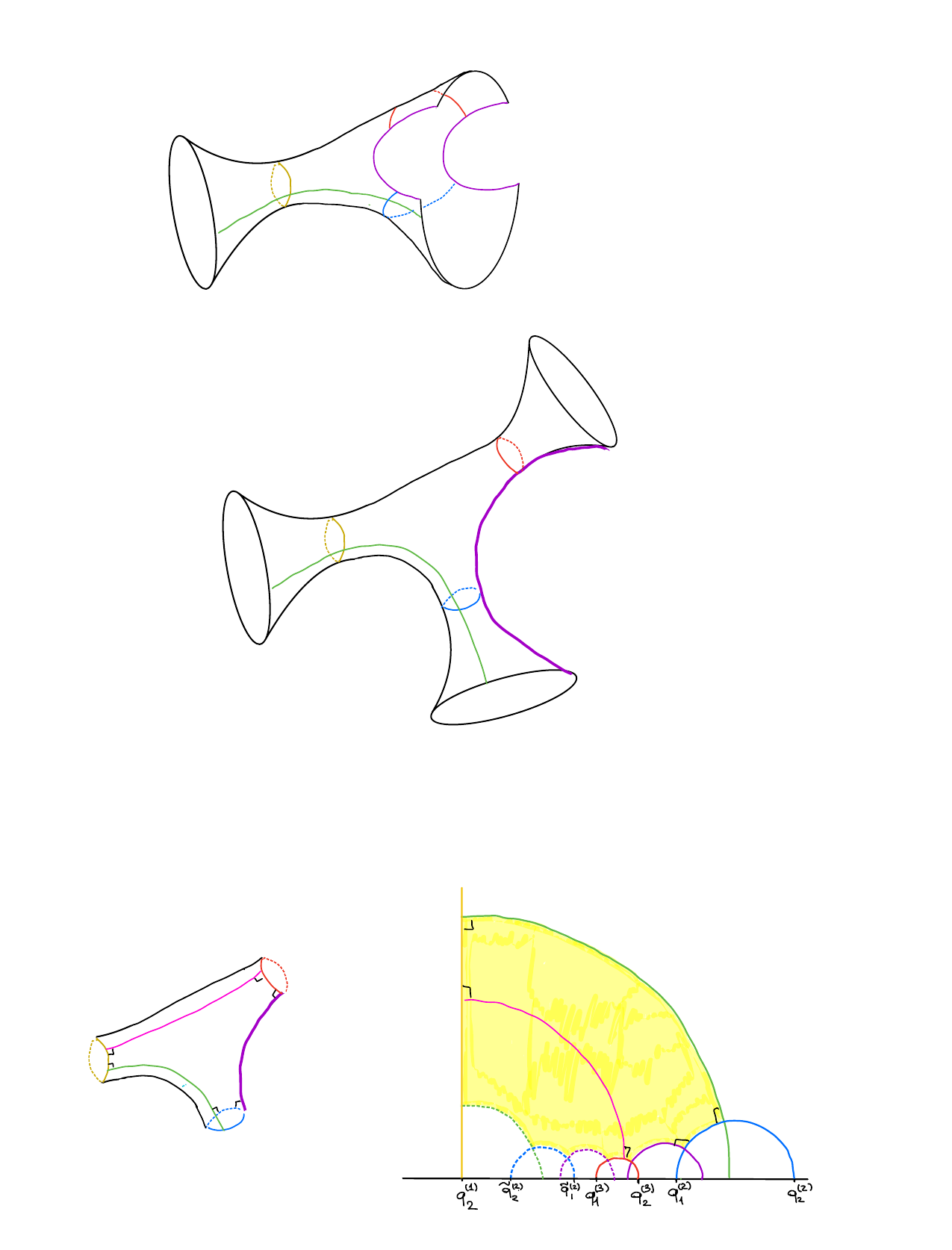}
		\caption{On the left we display the pair of pants cut into two hexagons (on the front, the other on the back). On the right, the corresponding octagon in $\mathbb H$ highlighted in yellow. The dashed blue geodesic is the image of the solid blue one under $\gamma_3^{-1}$. This octagon is also split in two  congruent hexagons.}\label{fig:pair-of-pants}
	\end{figure}
	
	By the Riemann mapping theorem, there exists a conformal transformation mapping the interior of this octagon into the interior of an ideal quadrangle in $\mathbb H$.
	The boundary angles are not preserved because conformality only applies on the interior of the domain. In principle, this conformal transformation could be deduced from Schwarz–Christoffel formula, however the explicit computation can only be carried out in a handful of cases, for example, in the case of a quadrangle, it already involves elliptic integrals, so we won't even attempt to do it in the case of an octagon. 
	
	Let us instead describe this ideal quadrangle in terms of geodesics both on the pair of pants and on $\mathbb H$. On the pair of pants, consider three non-self intersecting geodesics asymptotically winding between two holes (see the black, light green and light blue geodesics on the left hand side of \Cref{fig:ideal}). On $\mathbb H$, two of these geodesics define an ideal quadrangle and the third subdivides this quadrangle into two ideal triangles (see the right hand side of \Cref{fig:ideal}).
	\begin{figure}[!htb]
		\centering
		\includegraphics[width=0.9\textwidth]{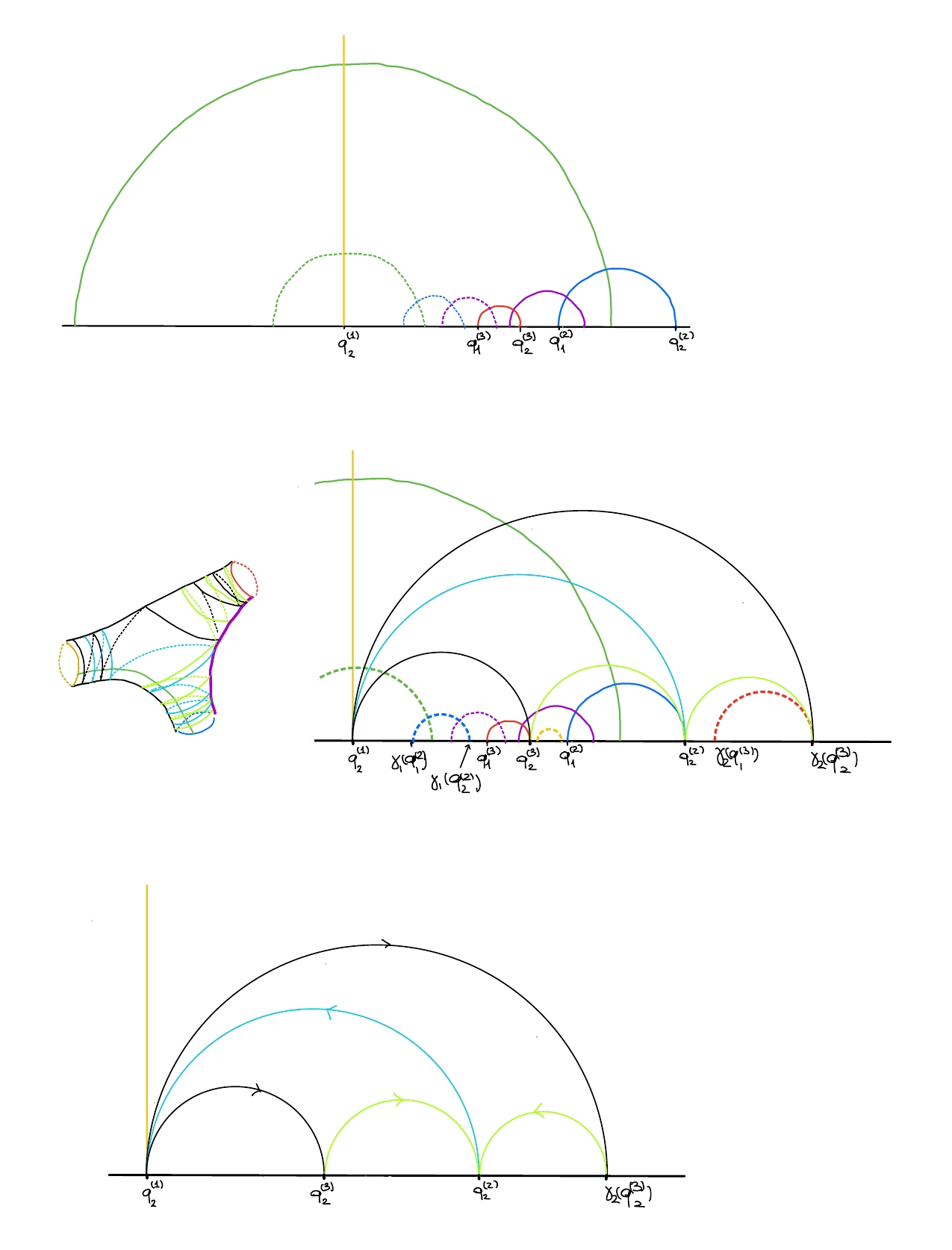}
		\caption{On the left the pair of pants and three infinitely long geodesics triangulating it. On the right their corresponding geodesics in $\mathbb H$.}\label{fig:ideal}
	\end{figure}
	Therefore, we can triangulate the pair of pants by non-self intersecting infinitely winding geodesics.  
\end{example}

In Example \ref{ex:id-tr}, we saw that we can triangulate the pair of pants by non-self intersecting infinitely winding geodesics.  More generally, we give the following:

\begin{definition}
	An ideal triangulation of a Riemann surface $\Sigma_{g,s}$ is a triangulation of its finite part $\Sigma_{g,s}^f$ by non-self-intersecting geodesics which  wind asymptotically around the bottle neck geodesics.
\end{definition}

Any finite portion $\Sigma_{g,s}^f$ of a Riemann surface $\Sigma_{g,s}$ obtained by chopping off the infinite funnels at the bottle neck geodesics admits an ideal triangulation made of non-self intersecting geodesics that wind infinitely towards one or two bottle neck geodesics. We don't prove this fact but it intuitively follows from the fact that we can always subdivide a Riemann surface with boundary into pairs of pants  see \Cref{fig:Riem-dec}.

\begin{figure}[!htb]
	\centering
	\includegraphics[width=0.4\textwidth]{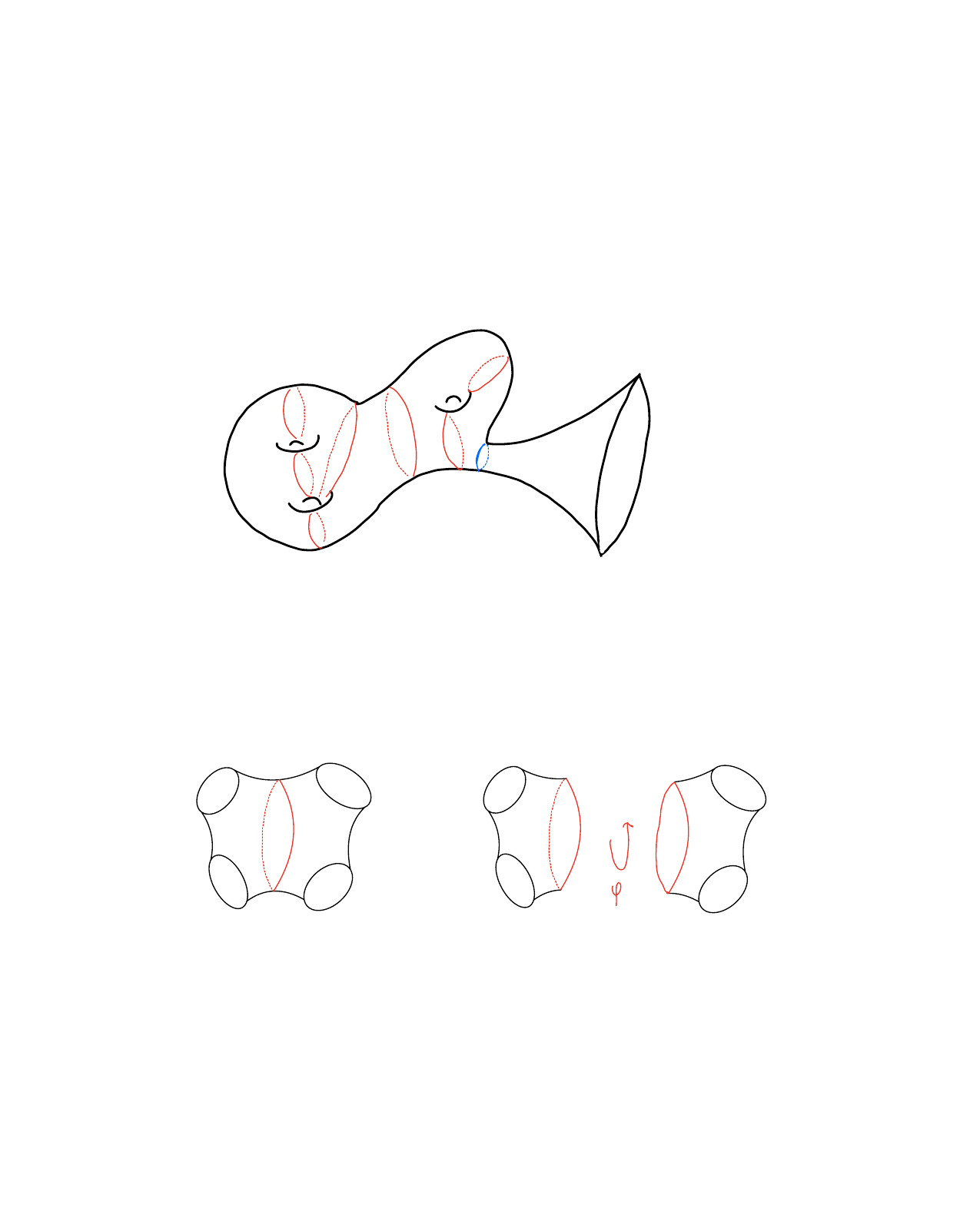}
	\caption{A pair of pants decomposition of $\Sigma_{3,1}$. We have denoted in red the geodesics where we cut the surface and in blue the bottle neck geodesic where we chop off the funnel.}\label{fig:Riem-dec}
\end{figure}   

Therefore,  we can expect to be able to extract the ideal triangulation of $\Sigma_{g,s}^f$ from the triangulations of the pair of pants.

It should be emphasized that such ideal triangulations, when carried through to $\mathbb H$, are not enough to determine $\Delta_{g,s}$ uniquely. 

\begin{example}\label{ex:no-fun-dom}
	Let's try to reconstruct two hyperbolic elements from the ideal quadrangle on the right hand side of \Cref{fig:ideal}. We have re-drawn this quadrangle in 
	\Cref{fig:quadrangle} for convenience.
	
	\begin{figure}[!htb]
		\centering
		\includegraphics[width=0.7\textwidth]{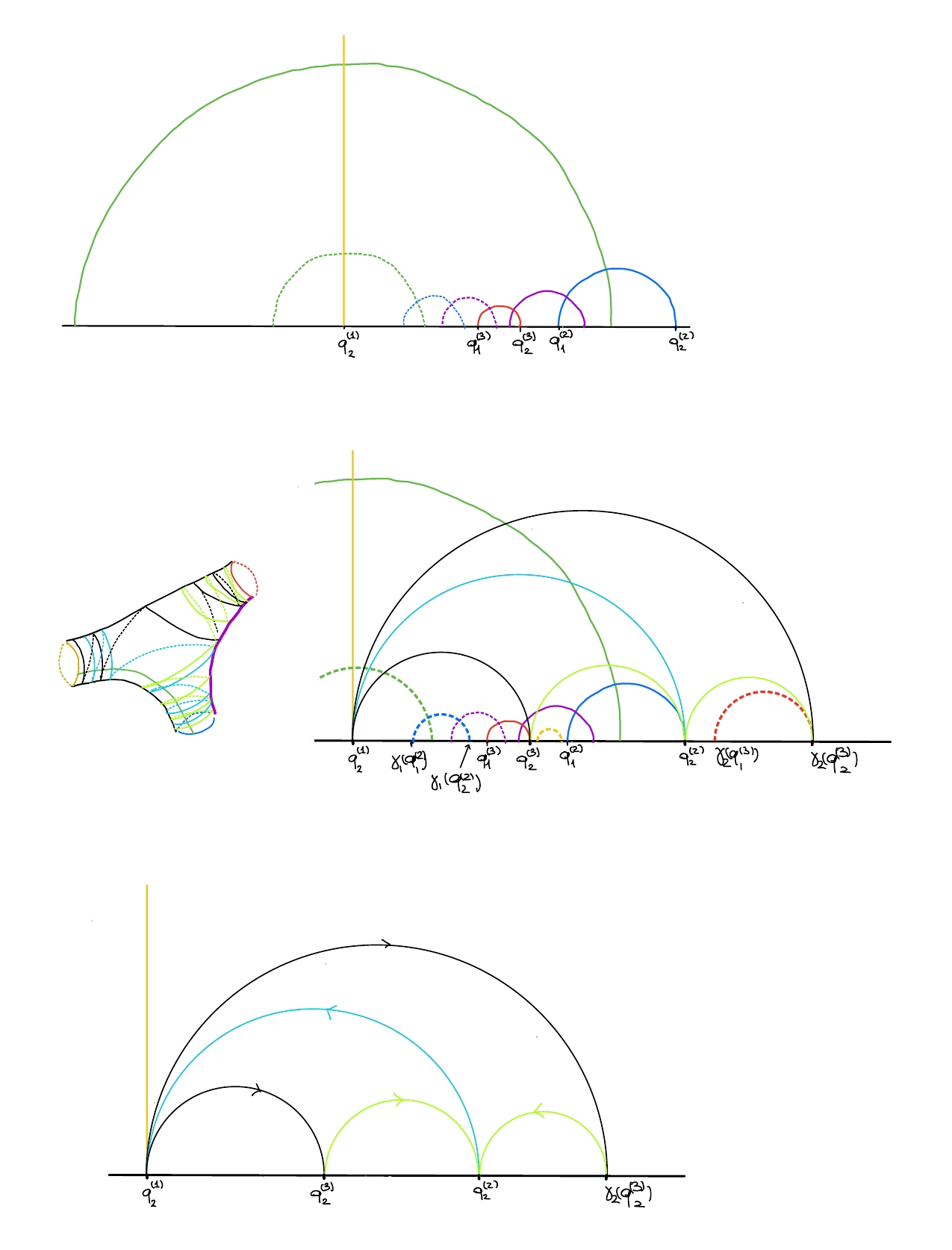}
		\caption{The ideal triangulation of a pair of pants.}\label{fig:quadrangle}
	\end{figure}
	
	We require that one hyperbolic element identifies the black geodesics, and that another hyperbolic element identifies the green ones. However, these data determine $\gamma_1$ and $\gamma_2$ up to two parameters. 
	In order to recover the two hyperbolic elements uniquely, we need to add some information, for example, let us add two more ideal triangles as in \Cref{fig:winding} and require the corresponding further identification of the black geodesics under $\gamma_1$ and green ones under $\gamma_2$. We leave to the reader the exercise of showing that this uniquely reconstructs $\gamma_1,\gamma_2\in\mathbb PSL_2(\mathbb R)$.
	
	\begin{figure}[!htb]
		\centering
		\includegraphics[width=0.7\textwidth]{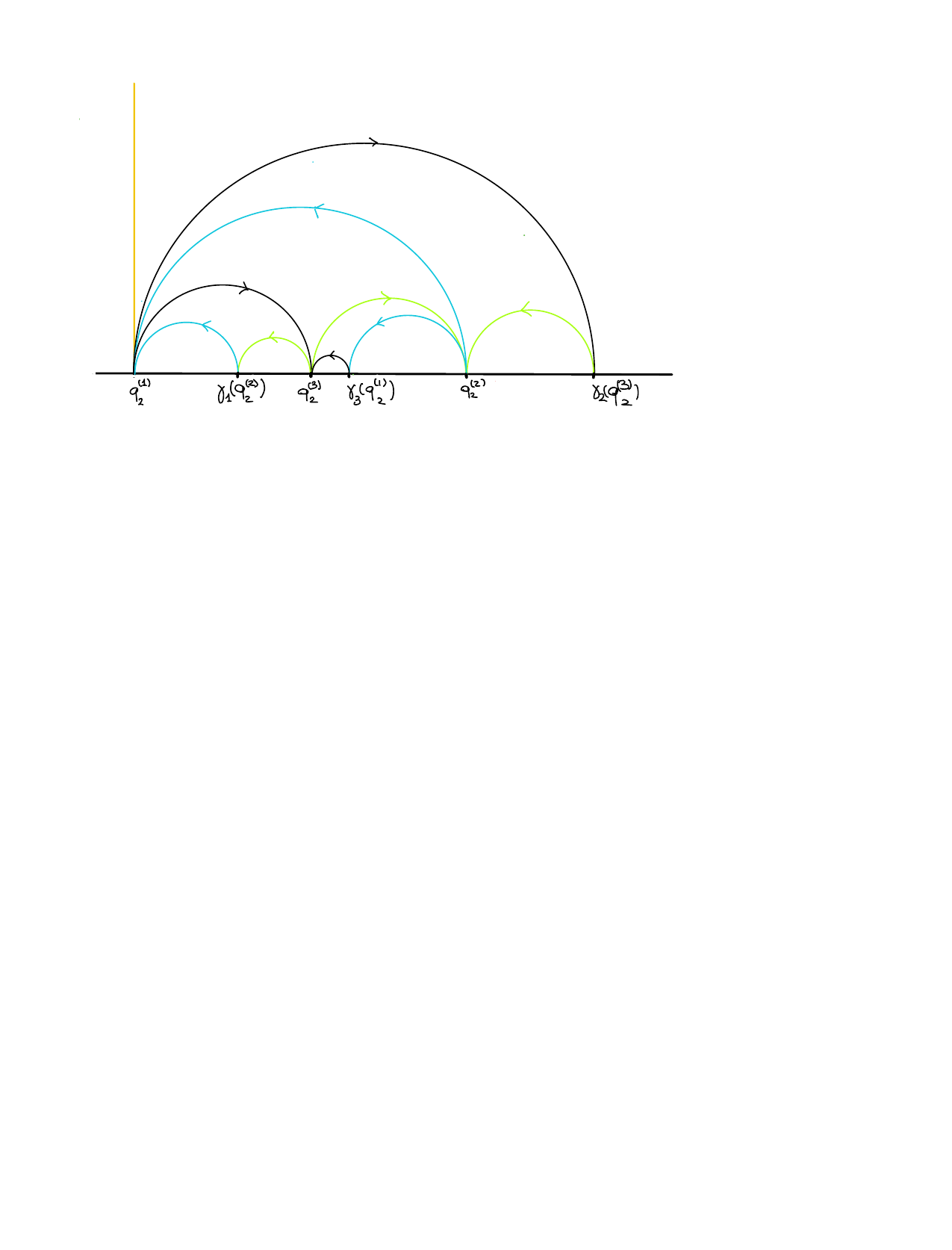}
		\caption{Several identified triangles by the action of the Fuchsian group on the ideal triangulation of a pair of pants.}\label{fig:winding}
	\end{figure}

	This will come in handy in Section \ref{suse:FGr}.
\end{example}

\subsection{Heuristic description of the Teichm\"uller space}\label{se:eu-T}

We now want to describe the Teichm\"uller space as the space of metrics of constant curvature modulo diffeomorphisms isotopic to the identity. Thanks to the fact that, given any Riemann surface with boundary  $\Sigma_{g,s}$, its finite portion $\Sigma_{g,s}^f$  admits an ideal triangulation that subdivides it into pairs of pants, 
to describe the Teichm\"uller space $\mathcal T(\Sigma_{g,s})$,  we need to first describe $\mathcal T(\Sigma_{0,3})$.

First let us show that by fixing the length of each bottleneck geodesic, we fix the metric on the pair of pants uniquely. To this aim we prove the following simple lemma:

\begin{lemma}\label{lm:hexagon}
	Given a hexagon with right internal angles in $\mathbb H$, the lengths of any three non consecutive boundaries fix uniquely the length of the other three. 
\end{lemma}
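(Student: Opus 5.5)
The plan is to show that a right-angled hexagon is determined, up to isometry of $\mathbb H$, by the lengths $a,b,c$ of three alternate sides. All remaining side lengths then follow. I would prove uniqueness by a rigid construction, and existence and the explicit formula by the hyperbolic law of cosines for right-angled hexagons.

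For uniqueness, label the sides cyclically $a, c', b, a', c, b'$, so that $a,b,c$ are pairwise non-adjacent. Use the transitivity of $\mathbb PSL_2(\mathbb R)$ on $\mathbb H$ (Appendix \ref{app:hyp-geo}) to place side $a$ on the imaginary axis, between $i$ and $ie^{a}$. The two sides $c'$ and $b'$ adjacent to $a$ lie on the geodesics orthogonal to the imaginary axis at these endpoints. These are the half-circles centred at $0$ of radii $1$ and $e^{a}$, and they are fixed once $a$ is given. The geodesic carrying side $b$ must be orthogonal to the half-circle carrying $c'$. The geodesic carrying side $c$ must be orthogonal to the half-circle carrying $b'$. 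These two geodesics must also have a common perpendicular, namely the geodesic carrying $a'$, and the segment of that perpendicular between them has length $a'$.

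The constraint that does the work is the following. The geodesic carrying $b$ is orthogonal to the $c'$-line, and the segment of it between the $c'$-line and the $a'$-line has length $b$. Symmetrically for $c$. I would reformulate this through the distance between the two ultraparallel geodesics. Two ultraparallel geodesics have a unique common perpendicular, and their distance is a function of the cross-ratio of their four endpoints, as in the cross-ratio formula \eqref{eq:length-cr}. Concretely, the lines $L_{c'}$ and $L_{b'}$ are at distance $a$. The line $L_{b}$ is perpendicular to $L_{c'}$, and the line $L_c$ is perpendicular to $L_{b'}$. The common perpendicular $L_{a'}$ of $L_b$ and $L_c$ cuts off segments of lengths $b$ and $c$.

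To solve this I would use the classical identity
$$\cosh c=-\cosh a\cosh b+\sinh a\sinh b\cosh c'',$$
for the side $c''$ opposite $c$, and its cyclic permutations. It can be obtained by a direct computation in the hyperboloid model, or in $\mathbb H$ by writing the geodesics as the fixed lines of the reflections they determine. Given $a,b,c>0$, this formula determines $\cosh a'$, $\cosh b'$ and $\cosh c'$ uniquely. Each of these values is greater than $1$, so it determines a unique positive length.

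I expect the main obstacle to be deriving this law of cosines cleanly with only the upper half-plane tools in the Appendix. My first attempt would be the vector/Minkowski approach. Represent each geodesic by a unit spacelike normal $n_i$, with $\langle n_i,n_{i+1}\rangle=0$ for consecutive sides and $\langle n_i,n_{i+2}\rangle=-\cosh(\text{side between})$. The identity then follows from the fact that six vectors in $\mathbb R^{2,1}$ are linearly dependent, giving a vanishing Gram determinant. If that proves cumbersome, the fallback is a rigidity argument: the construction above leaves no freedom once $a,b,c$ are fixed, which already gives the uniqueness claimed in the lemma without an explicit formula.
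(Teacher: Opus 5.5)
Your proof is correct in substance, but it takes a genuinely different route from the paper.

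The paper writes down no formula. It places the \emph{unknown} side lying between $a$ and $b$ on the imaginary axis, starting at $i$. It lays off $a$ along the unit semicircle, and encodes the constraint on $b$ by the Euclidean ray from the origin that stays at hyperbolic distance $b$ from the axis. This leaves a single free parameter, the point $z_1$ on that ray. Erecting the remaining perpendiculars gives a candidate for the third given side. The paper then asserts that requiring this side to have length $c$ fixes $z_1$; that tacitly assumes the length is monotone as $z_1$ slides, which is never checked.

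You instead use the right-angled hexagon law of cosines. In your labelling it reads $\cosh c'=(\cosh c+\cosh a\cosh b)/(\sinh a\sinh b)$; your $c''$ should be $c'$, the side opposite $c$. Every right-angled hexagon satisfies this identity, so uniqueness is immediate and comes with explicit values. This is exactly what the sliding argument leaves unproved. The price is hyperboloid-model machinery outside the upper half-plane toolkit of the notes. Note that the identity is a necessary condition, so it gives uniqueness, not existence as your first paragraph says; existence is not needed here.

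Two steps of your sketch need repair.

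First, a single vanishing Gram determinant does not directly give the identity. Any four of the six normals contain an opposite pair. Its inner product is $-\cosh$ of the distance between opposite sides, which is not among the data. A clean derivation goes as follows.
\begin{itemize}
\item Number the normals $n_1,\dots,n_6$ along $a,c',b,a',c,b'$, all pointing outward.
\item Then $\langle n_2,n_4\rangle=-\cosh b$, $\langle n_4,n_6\rangle=-\cosh c$ and $\langle n_6,n_2\rangle=-\cosh a$. This uniform sign is where the configuration enters: the hexagon lies between each pair of these lines.
\item Each given side's normal is orthogonal to its two neighbours, so $n_1=\pm\,n_6\otimes n_2/\sinh a$ and $n_3=\pm\,n_2\otimes n_4/\sinh b$, where $\otimes$ is the Lorentzian cross product.
\item The identity $\langle x\otimes y,z\otimes w\rangle=\langle x,w\rangle\langle y,z\rangle-\langle x,z\rangle\langle y,w\rangle$ then yields $\cosh c'=|\langle n_1,n_3\rangle|$ in the form above.
\end{itemize}

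Second, your fallback is not an argument. With the \emph{known} side $a$ on the axis you still have two unknowns, the feet $b'$ and $c'$, against two constraints. Saying ``no freedom'' merely restates the lemma. If you want a synthetic proof, the paper's normalisation is the better one because it reduces to one parameter, but it still needs a monotonicity argument.
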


\begin{proof} To follow this proof it is best to look at \Cref{fig:hexagon}.
	\begin{figure}[!htb]
		\centering
		\includegraphics[width=0.8\textwidth]{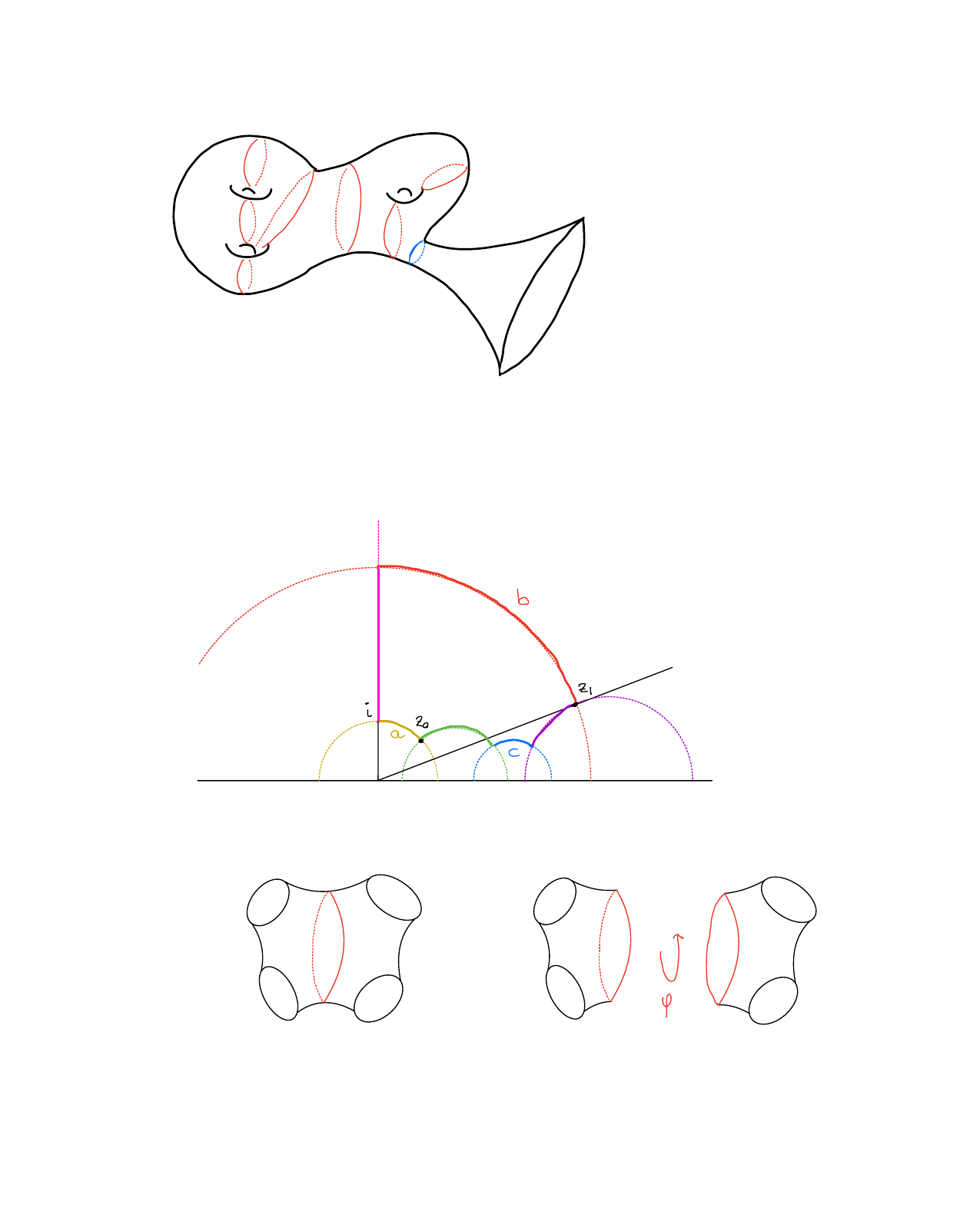}
		\caption{Proof of \Cref{lm:hexagon}.}\label{fig:hexagon}
	\end{figure}   
	
	By the action of $\mathbb PSL(2,\mathbb{R})$, we place the first boundary of un-known length on the imaginary axis, starting at $i$ (dashed magenta). Select the unique geodesic (dashed yellow) orthogonal to the imaginary axis in $i$. Fix  a point $z_0$ on the dashed yellow geodesic in such a way that the segment of extrema $i$ and $z_0$ has known length $a$. Then we take the unique geodesic (dashed green) orthogonal to the yellow geodesic in the point $z_0$. Now take a Euclidean line from the origin (black) such that any point on this line except the origin has given distance $b$ from the imaginary axis.
	For any point $z_1$ on this line, we  consider the unique geodesic orthogonal to the imaginary axis through $z_1$ (in dashed red). This cuts a segment of length $b$ on the red dashed geodesic by construction (the segment is in solid red in the picture). Take the unique geodesic orthogonal to it at $z_1$ (in dashed purple) and the unique geodesic orthogonal to the dashed purple and the dashed green (in dashed blue).
	As we slide the point $z_1$ on the black line, the dashed red, purple and blue geodesics slide. We slide them until the segment on the dashed blue geodesic cut by the intersections with the dashed green and dashed purple has given length $c$. This uniquely fixes the choice of the point on the black line, therefore it fixes the length of the magenta, green and purple segments.
\end{proof}

As a consequence of Lemma \ref{lm:hexagon}, it is clear that the two hexagons in \Cref{fig:pair-of-pants} are congruent (magenta, green and purple boundaries have the same length). Therefore, knowing the length of each boundary geodesic in the pair of pants fixes uniquely each hexagon and therefore the octagon, hence the metric. So a point in $\mathcal T_{0,3}$ is uniquely given by the three lengths of the three bottleneck geodesics.
Therefore
$$
\dim_{\mathbb R}\mathcal T(\Sigma_{0,3})=3.
$$

In general, the finite part of a Riemann surface $\Sigma_{g,s}^f$ is cut into $2 g-2+s$ pairs of pants. When we glue back the Riemann surface from the pairs of pants, at each gluing we will have a freedom of rotating the two geodesics that are glued with respect to each other - we parametrize this freedom by an angle called \textbf{twist}. This proves the following  (by induction):
$$
\dim_{\mathbb R}\mathcal T(\Sigma_{g,s})=6 g-6+ 3s.
$$
Note that some authors fix the lengths of the bottle neck geodesics, namely consider the conjugacy classes of loops corresponding to the holes to be fixed and therefore give the dimension as $6 g-6+ 2s$.

\begin{example}
	In the case of a sphere with $4$ boundaries, we separate it in two pairs of pants (not uniquely!). Then, in order to specify the metric uniquely,  we have to fix the lengths of the four bottle neck geodesics,  the length of the geodesic cutting the sphere with $4$ boundaries into two pair of pants and the corresponding twist. Namely 
	$$
	\dim_{\mathbb R}\mathcal T(\Sigma_{0,4})=6.
	$$  
\end{example}

\subsection{Ribbon graphs and coordinatization of the Teichm\"uller space}\label{se:ribbon-graphs}

A \textbf{ribbon-graph}, called also \textbf{fat-graph}, associated to a Riemann
surface  $\Sigma_{g,s}$ of genus $g$ and with $s$ holes is a connected three-valent
graph drawn without self-intersections on $\Sigma_{g,s}$
with a prescribed cyclic ordering
of labeled edges entering each vertex~\cite{Fock1}~\cite{Fock2}. 
Such a graph is dual to the ideal triangulation associated to $\Sigma_{g,s}^f$.

\begin{example}
	We can carry out this duality on the ideal triangulation in $\mathbb H$ taking into account the fact that the Fuchsian group identifies edges, see \Cref{fig:fat-graph}. 
	
	\begin{figure}[!htb]
		\centering
		\includegraphics[width=0.8\textwidth]{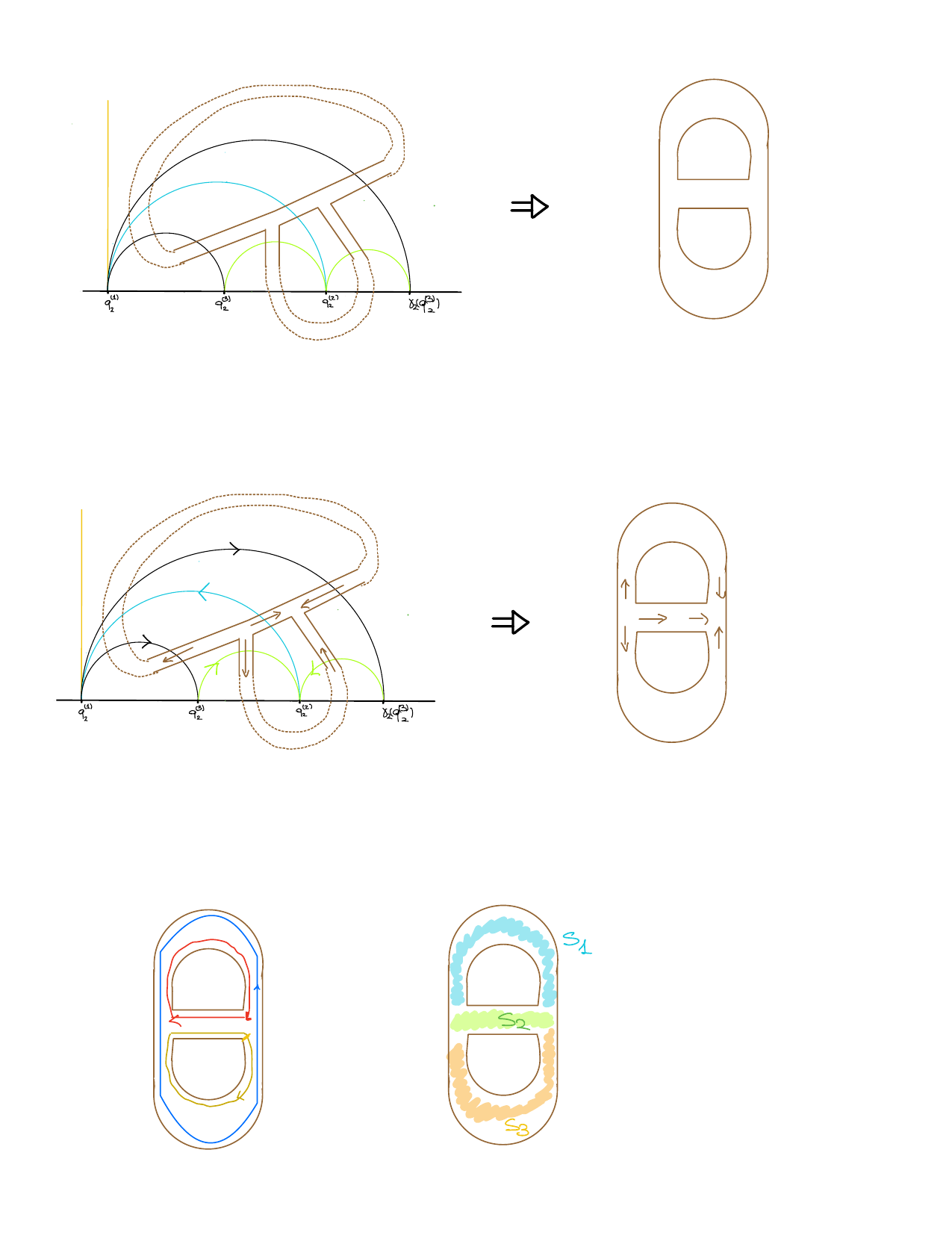}
		\caption{The duality between ideal triangulation (given by the black, blue and green geodesics) and fat-graph in the case of a pair of pants. The dashed part of the fat-graph is due to the pairwise identification of edges.}\label{fig:fat-graph}
	\end{figure}

	This duality allows to  choose an orientation on the edges of the fat-graph compatible with the choice of orientation of the ideal triangulation of $\Sigma_{g,s}^f$, see \Cref{fig:orientation}. 
	
	\begin{figure}[!htb]
		\centering
		\includegraphics[width=0.7\textwidth]{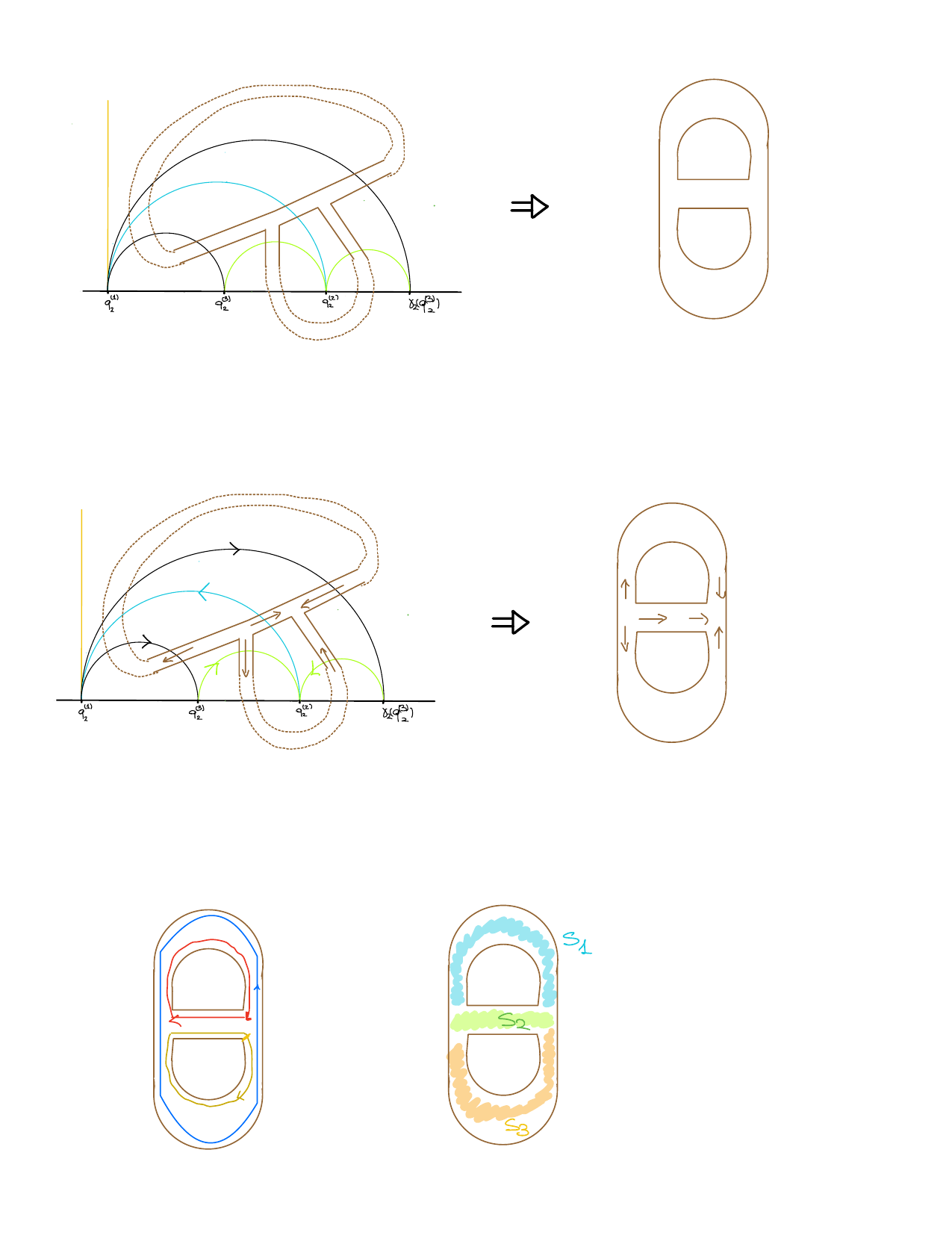}
		\caption{The orientation on the fat-graph is dictated by the orientation on the ideal triangulation. Going along an edge in the ideal triangulation, the transversal edge of the fat-graph  is oriented left to right, so that each vertex has either all incoming or all outgoing edges.}\label{fig:orientation}
	\end{figure}   
	
\end{example}

Given a Riemann surface $\Sigma_{g,s}$ of genus $g$ and $s$ holes, its finite part $\Sigma_{g,s}^f$ obtained by chopping off its infinite funnels at the bottleneck geodesics has hyperbolic area $2\pi(2 g-2 +  s)$. This means that we can triangulate $\Sigma_{g,s}^f$ with $4 g-4 + 2 s$ ideal triangles. Taking the dual of the triangulation we obtain a ribbon graph with $4 g-4 + 2 s$ vertices and $s$ faces. Because the Euler characteristic of $\Sigma_{g,0}$ is $2-2g$, we obtain that the number of edges in the ribbon graph is $6g-6+3 s$.

The advantage of considering fat-graphs is that conjugacy classes of elements of $\pi_1(\Sigma_{g,s})$ correspond to loops in the fat-graph.

The geodesic length functions, which are traces of hyperbolic elements in the Fuchsian group $\Delta_{g,s}$ such that 
$$
\Sigma_{g,s}\sim\mathbb H\slash \Delta_{g,s}
$$
are obtained by decomposing each hyperbolic matrix $\gamma\in \Delta_{g,s}$ into a
product of the so--called \textbf{right, left and edge matrices:}
\begin{equation}
	R:=\left(\begin{array}{cc}1&1\\-1&0\\
	\end{array}\right), \qquad
	L:=\left(\begin{array}{cc}0&1\\-1&-1\\
	\end{array}\right),\qquad
	X({s_i}):=\left(\begin{array}{cc}0&-\exp\left({\frac{s_i}{2}}\right)\\
		\exp\left(-{\frac{s_i}{2}}\right)&0\end{array}\right),
	\label{eq:generators}
\end{equation}
where $s_i$ is a coordinate associated to the $i$-th edge in the fat graph.
Given a loop in the fat-graph, namely a loop in the finite part of the Riemann surface $\Sigma_{g,s}^f$, the corresponding matrix in $\Delta_{g,s}$ is obtained by selecting a starting point at the end of an edge, and then writing from right to left all moves we need to do to complete the loop. This is best understood in an explicit example.

A coordinate $s_i$ is assigned to each of the 
$6g-6+3 s$ edges. The edge coordinates $s_1,\dots, s_{6g-6+3 s}$ coordinatize $\mathcal T_{g,s}$ are called \textbf{Thurston shear coordinates}. In these lecture notes, we don't prove this fact. A general explanation of these coordinates can be found in Section 2 of \cite{Ch-Pen}.

\begin{example}
	In the case of a pair of pants, the fat-graph is displayed on the left of \Cref{fig:loops-fat}.
	\begin{figure}[!h]
		\centering
		\includegraphics[width=0.6\textwidth]{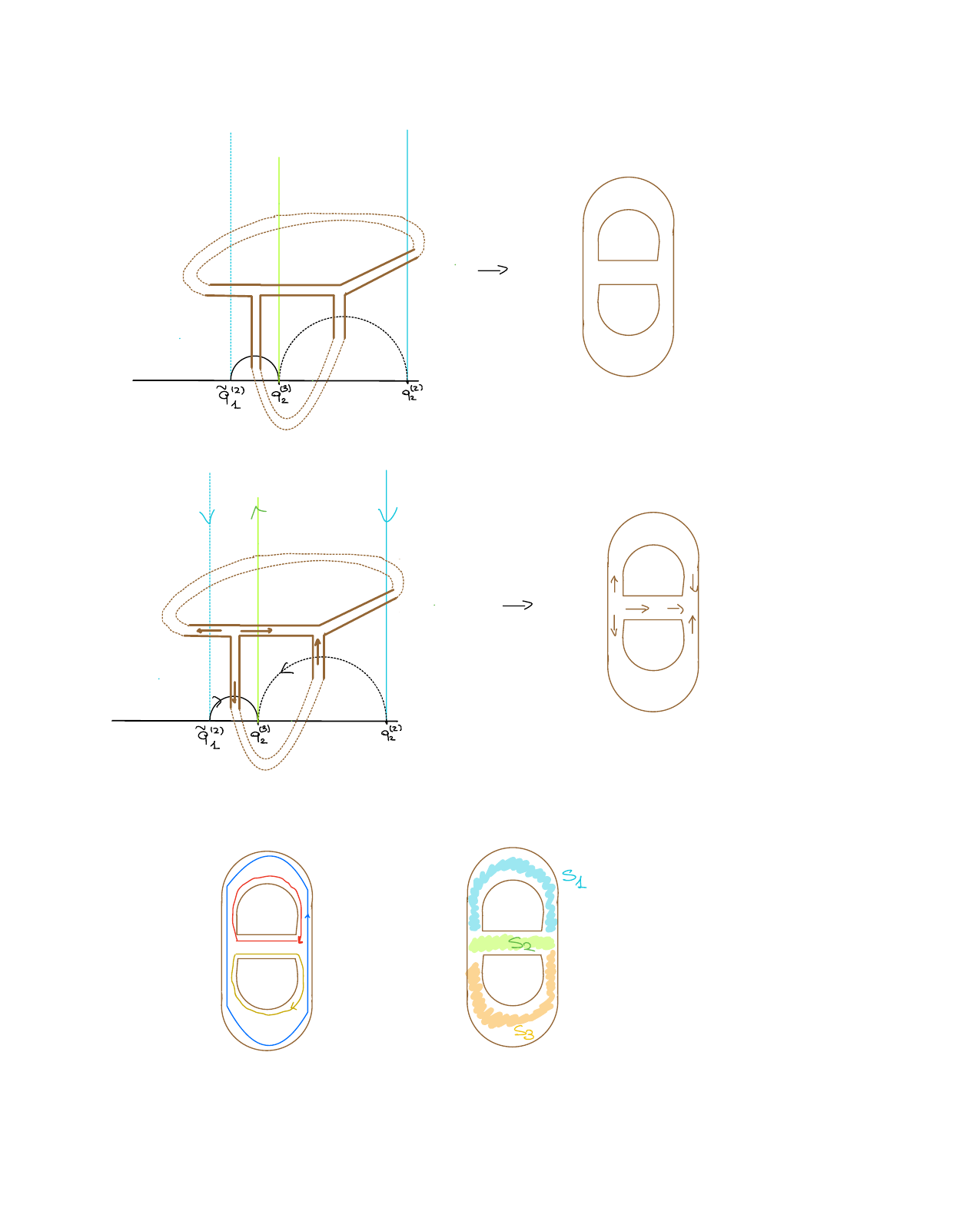}
		\caption{On the left, the three bottle neck geodesics drawn in the fat-graph, on the right the three edges of the fatgraph and their associated coordinates $s_1,s_2,s_3$.}\label{fig:loops-fat}
	\end{figure}   
	
	We label by $s_1,s_2,s_3$ the edges of the fat-graph as in the right-hand-side of \Cref{fig:loops-fat}. Then, we obtain the matrices by choosing a common starting point, for example at the end of edge $s_3$. Then $\gamma_1$ corresponds to going along $s_3$ in the opposite direction, turning right, going along $s_2$ and turning right. Similarly, for $\gamma_2$ and $\gamma_3$. Therefore
	\begin{align}\label{eq:fuchsian3}
		\gamma_1=R X(s_2) R X(s_3)^{-1}=-R X(s_2) R X(s_3),\nonumber\\
		\gamma_2=-X(s_3)RX(s_1)R,\\
		\gamma_3=L X(s_1)R X(s_2) L.\nonumber
	\end{align}
	Note that we have put a minus sign on the right hand side of $\gamma_1$ and $\gamma_2$ to keep in mind the orientation of the loop w.r.t. the fat-graph. However, because we actually work in $\mathbb PSL_2(\mathbb R)$ this overall sign is irrelevant.
	
	Explicitly, formulae \eqref{eq:fuchsian3} give:
	$$
	\gamma_1=\left(\begin{array}{cc}
		- e^{-\frac{s_2+s_3}{2}}   &  e^{\frac{s_3}{2}} \left(e^{-\frac{s_2}{2}}+e^{\frac{s_2}{2}}\right)\\
		0 & - e^{\frac{s_2+s_3}{2}} 
	\end{array}
	\right),\quad
	\gamma_2=\left(\begin{array}{cc}
		- e^{\frac{s_1+s_3}{2}}   &  0\\
		-e^{-\frac{s_3}{2}} \left(e^{-\frac{s_1}{2}}+e^{\frac{s_1}{2}}\right) & - e^{-\frac{s_1+s_3}{2}} 
	\end{array}
	\right),
	$$
	while $\gamma_3=\gamma_2^{-1}\gamma_1^{-1}$.
	The corresponding fractional linear transformations are:
	\begin{align*}
		\gamma_1(z)= e^{-s_2-s_3}  z-1- e^{s_2},\quad
		\gamma_2(z)=\frac{e^{s_1+s_3} z}{e^{s_1} z+z+1  },\\
		\gamma_3(z)=-\frac{1+e^{s_2}(1+z)}{1+e^{s_2}(1+z)+e^{s_1+s_2}(1+z)}.\qquad\qquad\qquad
	\end{align*}
	For $s_1=\log(2)$, $s_2=0$, $s_3=\log(3)$, we obtain the transformations of Example \ref{ex:3holedsphere}.
	The fixed points of these elements are
	\begin{equation}\label{eq:fixpts}
    \begin{split}
		q_1^{(1)}= \infty,\quad
		q_2^{(1)}=-\frac{e^{s_3}(1+e^{s_2})}{e^{s_2+s_3}-1},\\
		q_1^{(2)}=0 ,\quad 
		q_2^{(2)}= \frac{e^{s_1+s_3}-1}{1+e^{s_1}},\\
        q_1^{(3)}=-1 ,\quad 
		q_2^{(3)}=-\frac{1+e^{-s_2}}{1+e^{s_1}}, 
         \end{split}
	\end{equation}
	which, for $s_1,s_2,s_3\in\mathbb R_+$, are qualitatively placed in the same way as in \Cref{fig:all-geo}. Therefore the description given in Examples \ref{ex:3holedsphere} and \ref{ex:id-tr} remains valid also in this case. 
	In particular, the geodesic lengths of the three bottleneck curves are
	$$
	l_{\gamma_1}= e^{\frac{s_2+s_3}{2}}+e^{-\frac{s_2+s_3}{2}},\quad
	l_{\gamma_2}= e^{\frac{s_1+s_3}{2}}+e^{-\frac{s_1+s_3}{2}},\quad
	l_{\gamma_3}= e^{\frac{s_1+s_2}{2}}+ e^{-\frac{s_1+s_2}{2}}.
	$$
\end{example}

\subsection{Summary of coordinatization for the Teichm\"uller space}\label{suse:procesi}

In subsection \ref{se:eu-T}, we gave a heuristic argument to show that the dimension of the Teichm\"uller space
is $6 g-6+ 3 s$. In subsection \ref{se:ribbon-graphs}, we explained how to associate a fat-graph to the finite part of $\Sigma_{g,s}$ and showed that this fat-graph has 
$6g-6+3 s$ edges, each equipped with a shear coordinate
$s_i$. We also gave an explanation of how such coordinates are enough to uniquely determine a metric of constant negative curvature on the finite part of the Riemann surface,
so that  $\mathcal T(\Sigma_{g,s})$ is completely coordinatized in terms of the shear coordinate $s_1,\dots,s_{6g-6+3 s}$. 

In section \ref{se:ribbon-graphs}, we gave an explanation how such coordinates are enough to uniquely determine a metric of constant negative curvature on the finite part of the Riemann surface.

In this subsection, we explain how these coordinates are also natural if we instead think about the  definition of
Teichm\"uller space as the quotient of the representation space of the fundamental group:
$$
\mathcal T(\Sigma_{g,s})=\opn{Hom}'(\pi_1(\Sigma_{g,s}),\mathbb PSL_2(\mathbb R))/\mathbb PSL_2(\mathbb R).
$$
What does it mean to take a coordinate, or more generally, a function on this space? It means that for every generating loop in $\pi_1(\Sigma_{g,s})$, we associate a fractional linear transformation in $\mathbb PSL_2(\mathbb R)$, or in other words a matrix in $SL_2(\mathbb R)$ defined up to global sign,
and then look for functions of these matrices that are invariant under conjugation. Procesi proved that it is enough to understand trace functions in order to understand the whole ring of invariant functions, and that this ring is generated by finitely many trace functions. 

This is in line with Lemma \ref{lm:conj-cl}: conjugacy classes of loops are in $1:1$ correspondence with geodesics of finite length and the trace of the matrix associated to such a conjugacy class by the choice of a homomorphism in the representation space is related to the length of this geodesic by \eqref{eq:length-tr}.

Of course, we need enough traces to generate the entire coordinate ring. Looking at formula \eqref{eq:fund-g}, we see that the fundamental group of $\Sigma_{g,s}$ is generated by $2 g+s-1$ equivalence classes. Taking the traces of the matrices associated to these equivalence classes would give us only $2 g+s-1$, while we need $6g-6+3 s$ traces to generate the coordinate ring. Therefore, we don't only take traces of the single matrices, but also of their products. In these lecture notes we will call these traces Fricke-Vogt coordinates.

This is explained in the next example.

\begin{example}\label{ex:PVI}
	We start by describing the coordinate ring of 
	$$\opn{Hom}'(\pi_1(\Sigma_{0,4}), SL_2(\mathbb K))/SL_2(\mathbb K),
	$$
	where $\mathbb K$ is either $\mathbb R$ or $\mathbb C$. Because the fundamental group of $\Sigma_{0,4}$ is generated by $4$ loops,
	this is equivalent to describing the 
	coordinate ring of the following space
	$$
	\{(M_1,M_2,M_3,M_4)| M_1\in SL_2(\mathbb K), M_1 M_2 M_3 M_4=\mathbb I\}/SL_2(\mathbb K).
	$$
	This is a $6$ dimensional space, because $3$ matrices in $SL_2(\mathbb K)$ depend on $9$ entries, and quotienting by a three dimensional group reduces the dimension to $6$. Consider the following traces, called \textbf{Fricke-Vogt coordinates}:
	$$ 
	x_1=\opn{Tr}(M_2 M_3),\, x_2=\opn{Tr}(M_1 M_3), \, x_3=\opn{Tr}(M_1 M_2),\, G_i=\opn{Tr}(M_i),\,\text{for}\, i=1,\dots,4,
	$$
	then,
	by iterating the so-called \textbf{skein relation:}
	\begin{equation}
		\label{eq:skein}
		\forall A,B\in SL_2(\mathbb K),\quad 
		\opn{Tr}(A B) +   \opn{Tr}(A B^{-1})=    \opn{Tr}(A)    \opn{Tr}(B),
	\end{equation}
	one can prove that 
	$$
	2=\opn{Tr}(M_1 M_2 M_3 M_4)
	$$
	is equivalent to the following the relation due to Fricke (a nice proof can be found in \cite{Magnus})
	\begin{equation}
		\label{eq:Fricke}
		\begin{split}
			&x_1 x_2 x_3 + x_1^2+x_2^2+x_3^2 -(G_4 G_1+G_2 G_3) x_1 -(G_4 G_2+G_1 G_3) x_2\\
			& -(G_4 G_3+G_2 G_1) x_3 +G_1^2 +G_2^2+G_3^2 +G_4^2 + G_1 G_2 G_3 G_4=4.
		\end{split}
	\end{equation}
	Relation \eqref{eq:Fricke} defines a $4$-parameter pencil of affine cubic surfaces  in $\mathbb R^3$ which is isomorphic to 
	$\opn{Hom}'(\pi_1(\Sigma_{0,4}), SL_2(\mathbb K))/ SL_2(\mathbb K)$. Note that for $\mathbb K=\mathbb R$, $G_i, x_j\in \mathbb R$, while 
	for $\mathbb K=\mathbb C$, $G_i, x_j\in \mathbb C$. Note that in the case of the Teichm\"uller space, we have to consider the group $\mathbb PSL_2(\mathbb R)=SL_2(\mathbb R)/\langle \pm\mathbb I\rangle$. In this case, we can still use 
	\eqref{eq:Fricke} as long as we make a choice for the signs once and for all. It turns out that the best choice is to take all traces in $\mathbb R_-$.
	
	Let us now describe $\Sigma_{0,4}$ by 
 the fat-graph in \Cref{fig:4-holed-sphere}.   
	\begin{figure}[!htb]
		\centering
		\includegraphics[width=0.6\textwidth]{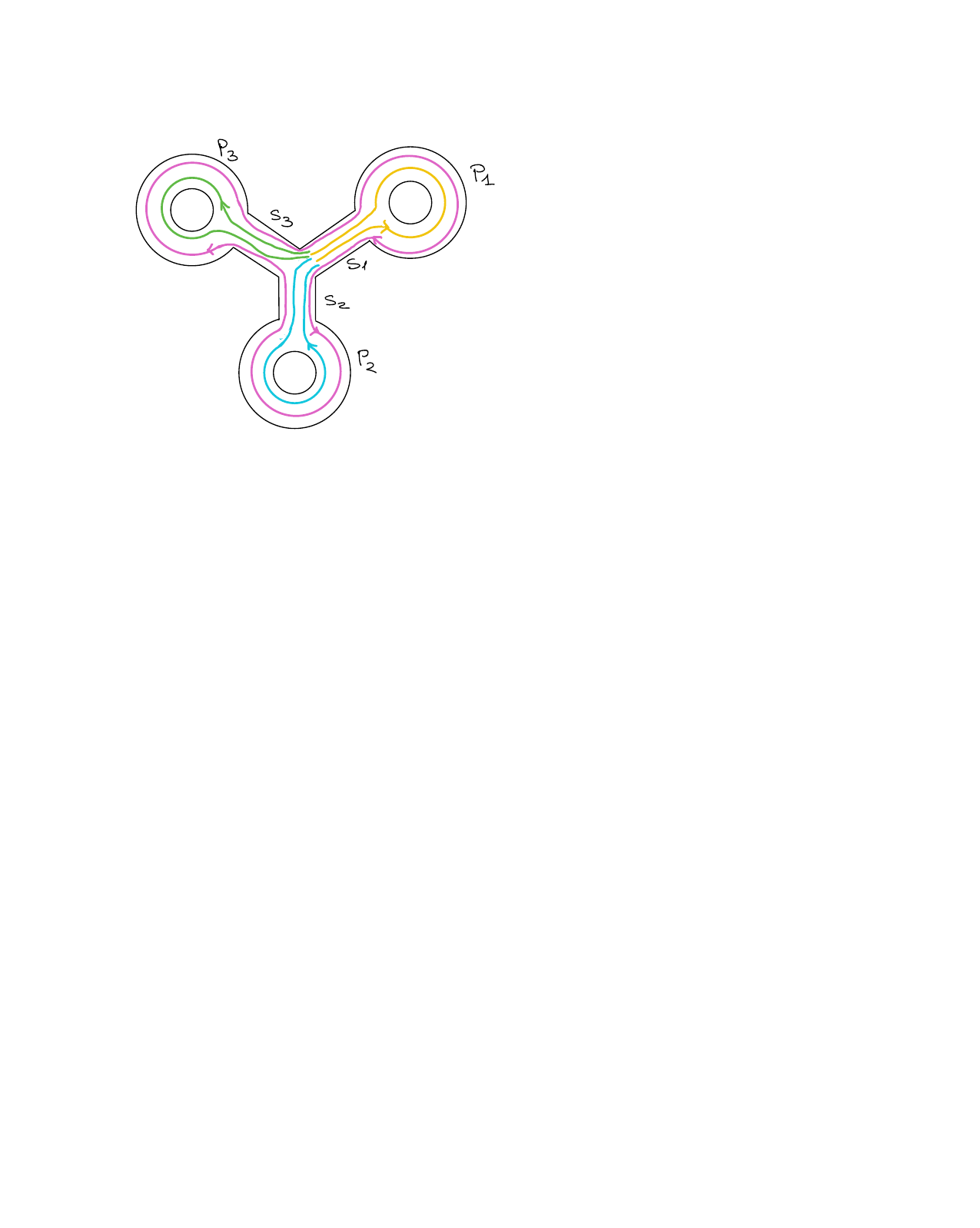}
		\caption{Fat graph of $\Sigma_{0,4}$. The   associated shear coordinates are labeled by $s_1,s_2,s_3$ and $p_1,p_2,p_3$. The fundamental group is generated by the $4$ loops
			$\gamma_1$ in yellow, $\gamma_2$ in cyan, $\gamma_3$ in green and $\gamma_4$ in pink. Note that $\gamma_1\gamma_2\gamma_3\gamma_4=1$.}\label{fig:4-holed-sphere}
	\end{figure}  
	
	We denote by $s_1,s_2,s_3,p_1,p_2,p_3$ the shear coordinates associated to the edges of the fat-graph, where the notation distinguishes between edges that go between different vertices and edges that go back to the same edge. By selecting the origin of all loops to be at the start of the edge labeled by $s_1$, we obtain the matrices corresponding to the loops as 
	\begin{equation}
	    \begin{split}
	\gamma_1=X(s_1)RX(p_1)RX(s_1),\quad 
	\gamma_2=-R X(s_2)RX(p_2)RX(s_2)L,\\
	\gamma_3=-L X(s_3)RX(p_3)RX(s_3)R,\quad \gamma_4= (\gamma_1\gamma_2\gamma_3)^{-1}.
\end{split}
	\end{equation}
	Let us now calculate $x_i$ and $G_j$ for $M_i=\gamma_i$. We get
	\begin{equation}\label{eq:x-s}
		\begin{split}
			x_1= - e^{\hat s_2+\hat s_3}-  e^{-\hat s_2+\hat s_3} - e^{-\hat s_2-\hat s_3}-\left(e^{\frac{p_3}{2}}+e^{-\frac{p_3}{2}}\right)e^{-\hat s_2}-\left(e^{\frac{p_2}{2}}+e^{-\frac{p_2}{2}}\right)e^{\hat s_3}     ,\\
			x_2= - e^{\hat s_3+\hat s_1}-  e^{-\hat s_3+\hat s_1} - e^{-\hat s_3-\hat s_1}-\left(e^{\frac{p_1}{2}}+e^{-\frac{p_1}{2}}\right)e^{-\hat s_3}-\left(e^{\frac{p_3}{2}}+e^{-\frac{p_3}{2}}\right)e^{\hat s_1}     ,\\
			x_3= - e^{\hat s_1+\hat s_2}-  e^{-\hat s_1+\hat s_2} - e^{-\hat s_1-\hat s_2}-\left(e^{\frac{p_2}{2}}+e^{-\frac{p_2}{2}}\right)e^{-\hat s_1}-\left(e^{\frac{p_1}{2}}+e^{-\frac{p_1}{2}}\right)e^{\hat s_2}     ,\\
			G_i= -e^{\frac{p_i}{2}}-e^{-\frac{p_i}{2}},\quad\text{for }\, i=1,2,3,\,\text{ and }\,
			G_4= -e^{\hat s_1+\hat s_2 + \hat s_3}-e^{-\hat s_1-\hat s_2 - \hat s_3},\\
		\end{split}
	\end{equation}
	where for convenience we have put $\hat s_i = s_i+\frac{p_i}{2}$.
	
	Therefore, we see that for $\mathbb K=\mathbb R$, the affine cubic surface \eqref{eq:Fricke} is coordinatized by the shear coordinates. Actually, because all terms on the r.h.s. in \eqref{eq:x-s} are real analytic,
	if we allow the shear coordinates to be any complex numbers, we obtain a coordinatization of the affine cubic surface \eqref{eq:Fricke} also in the case $\mathbb K=\mathbb C$. 
	
	Viceversa, we can interpret \eqref{eq:Fricke} as an equation for $G_4$ with two solutions and then we can show that the Jacobian of $x_1x_2,x_3,G_1,G_2,G_3$ as functions of $\hat s_1, \hat s_2, \hat s_3,p_1,p_2,p_3$ is non zero, therefore we can express the shear coordinates in terms of the Fricke-Vogt coordinates. 
\end{example}

\section{Confluence of holes}\label{se:confl}

In this section, we look at the \textbf{chewing–gum moves} introduced in \cite{ChM}, namely  processes where two boundary components in $\Sigma_{g,s}$ collide. 
As the two boundary components approach, the strip between them, called \textbf{chewing-gum strip}, becomes thinner and thinner and, at the same time, longer and longer (in order to preserve the hyperbolic area).
Upon taking the limit of the length of the chewing-gum strip to
infinity, its width becomes $0$, namely it breaks into two \textbf{bordered cusps} in the resulting surface\footnote{There are also degenerate chewing gum  moves that produce disconnected surfaces, but we won't consider these in these lecture notes.}.
Closed geodesics that were passing along the chewing-gum strip become arcs, namely infinitely
long geodesics that start and terminate at bordered cusps. To make sense of a metric on the resulting surface, each bordered cusp is equipped with a horocycle and the length of each infinite arc is only measured along the portion between the horocycles. The resulting decorated surface is called a \textbf{candle cake}. An idea of this process is depicted in \Cref{fig:chewing}.

\begin{figure}[!htb]
	\centering
	\includegraphics[width=0.8\textwidth]{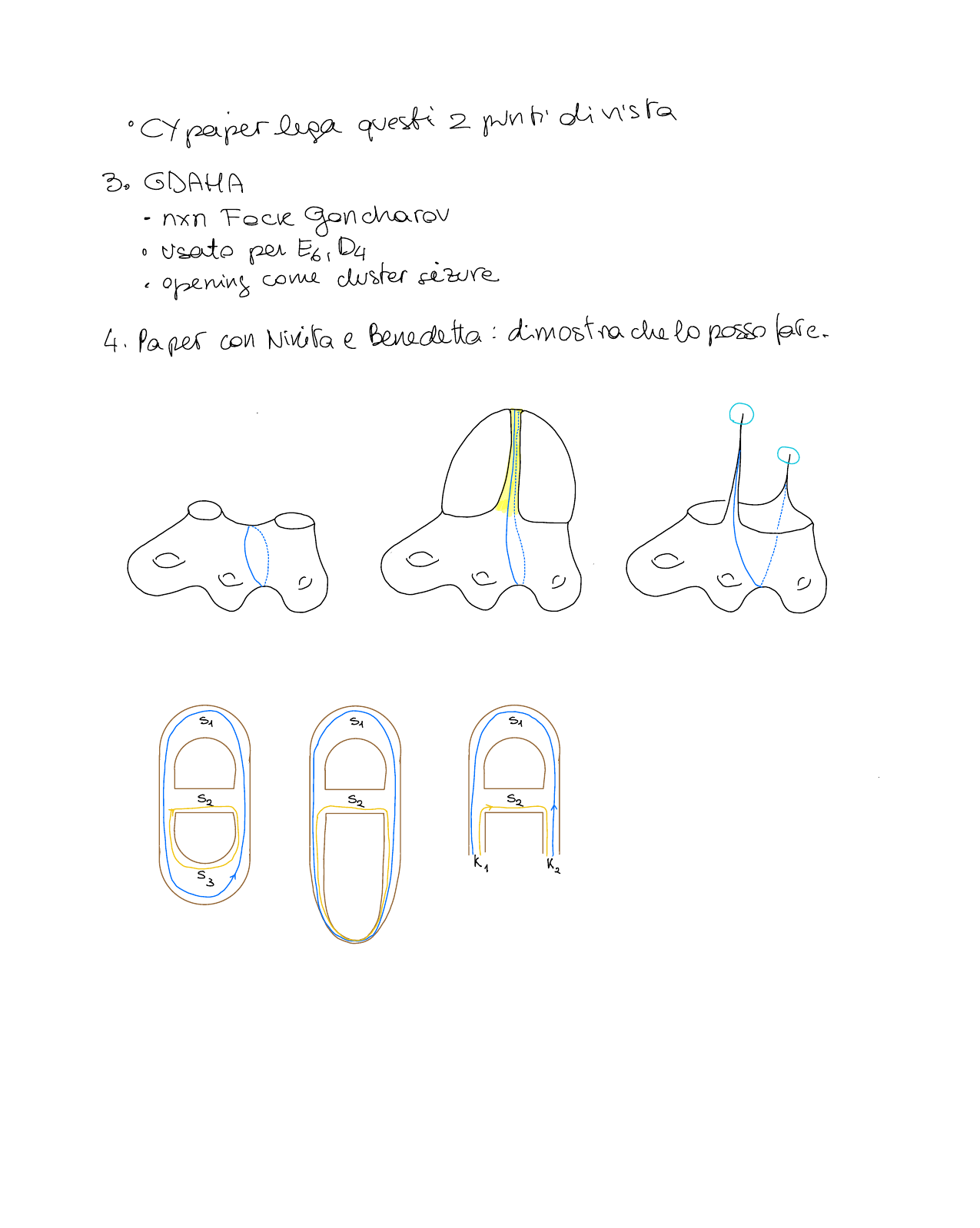}
	\caption{On the left we have the initial surface and a closed blue geodesic. On the center, the boundaries start colliding, so that the strip between them (highlighted in yellow) becomes thinner and longer. The blue geodesic also becomes longer. On the right, at the end of the collision,  the two holes  have merged and created two cusps. The blue geodesic has become an infinite arc.}\label{fig:chewing}
\end{figure}   

We shall denote by $\Sigma_{g,s,m}$ surfaces of genus $g$, $s$ boundaries and $m$ bordered cusps. We call these \textbf{surfaces with marked boundaries}.

To understand what happens to the fat-graph under the collision process we go back to the example of the pair of pants.

\begin{example}\label{ex:s3toinf}
	In subsection \ref{se:ribbon-graphs}, we saw that the Teichm\"uller space of the pair of pants can be described by the fat-graph in \Cref{fig:loops-fat} with shear coordinates $s_1,s_2,s_3$.  To merge two holes, for example the one with the yellow boundary and the one with the blue boundary, we need the bottom part of the fat-graph to become infinitely thin and long as explained above. In other words, the edge labeled by $s_3$ contains our chewing-gum strip. Therefore, in order to perform the chewing-gum move, we break up the edge labeled by $s_3$ as shown in \Cref{fig:chewing-graph}.
	
	\begin{figure}[!htb]
		\centering
		\includegraphics[width=0.8\textwidth]{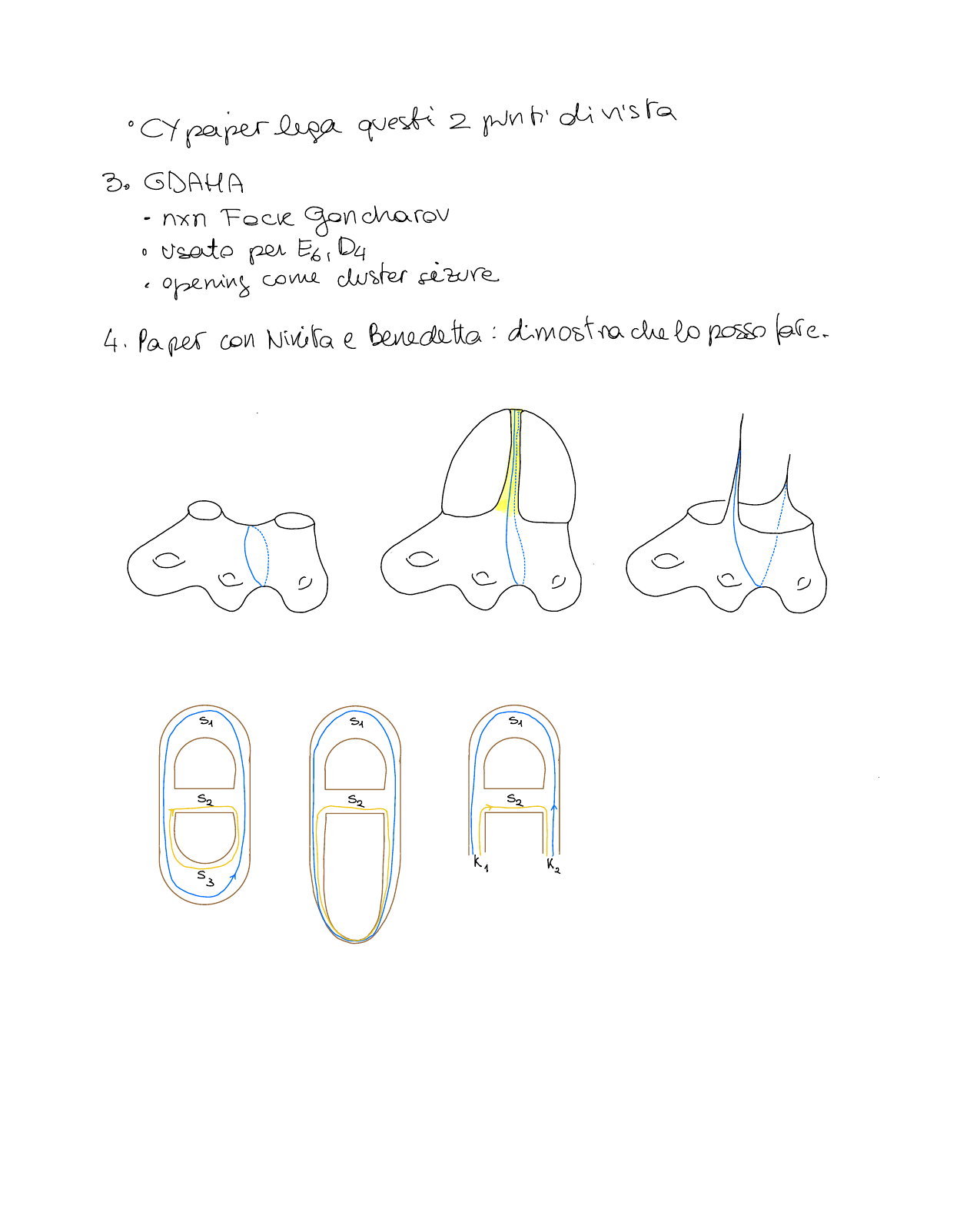}
		\caption{On the left we have the initial fat-graph and two closed geodesics in yellow and blue. On the center, the edge labeled by $s_3$ becomes thinner and longer. The blue geodesic also becomes longer. On the right, at the end of the collision,  the final fat-graph has two bordered cusps. The blue geodesic has become an infinite arc.}\label{fig:chewing-graph}
	\end{figure}   
\end{example}

As illustrated in Example \ref{ex:s3toinf},  after the collision, the ribbon graph breaks, and is replaced by a \textbf{cusped fat graph}.

\begin{definition}\label{def-graph-cusp}
	A connected graph ${\mathcal G}_{g,s,m}$ with a prescribed cyclic ordering of edges
	entering each vertex is called \textit{ cusped fat graph} for the surface $\Sigma_{g,s,m}$, if 
	\begin{itemize}
		\item[(a) ]\, it can be embedded  without self-intersections in $\Sigma_{g,s,m}$;
		\item[(b) ]\, all vertices of ${\mathcal G}_{g,s,m}$ are three-valent except exactly $m$
		one-valent vertices (endpoints of the open edges), which are placed at the corresponding
		bordered cusps;
		\item[(c) ]\, it has exactly $s$ faces.
	\end{itemize}
\end{definition}

We will assign shear coordinates $s_i$ to the internal edges and \textbf{pinning variables} $k_i$ to the open edges. At the moment, we haven't yet justified why these new pinning variables should be introduced, but hopefully, in the next example this will become clearer. 

\begin{example}\label{ex:s3toinf-1}
	As we saw in Example \ref{ex:s3toinf},
	in order to perform the chewing-gum move, we break up the edge labeled by $s_3$ in three parts, an initial part of length $k_1$, a middle part of length $1/\epsilon$ and a final part of length $k_2$, in other words
	\begin{equation}
		\label{eq:s3k1k2}
		s_3= k_1-\log[\epsilon]+k_2
	\end{equation}
	and let $\epsilon\to 0$. Let us see what this means in $\mathbb H$. Some of the fixed points \eqref{eq:fixpts} are obviously affected by this limit:
	\begin{equation}\label{eq:fixpts-lim}
		q_2^{(1)}\mapsto -1-e^{-s_2},\quad
		q_2^{(2)}\mapsto \infty,
	\end{equation} 
	so that the dashed blue geodesic and the yellow one in \Cref{fig:pair-of-pants} come closer and closer and finally asymptotically meet in a point on the real axis. Similarly, because $q_2^{(2)}\mapsto\infty$, the solid blue geodesic meets the yellow one at $\infty$. This creates two cusps, one at  $\tilde q_2^{(2)}$ and the other at $\infty$, see the left-hand side of \Cref{fig:cusped-case}. 
	
	\begin{figure}[!htb]
		\centering
		\includegraphics[width=0.8\textwidth]{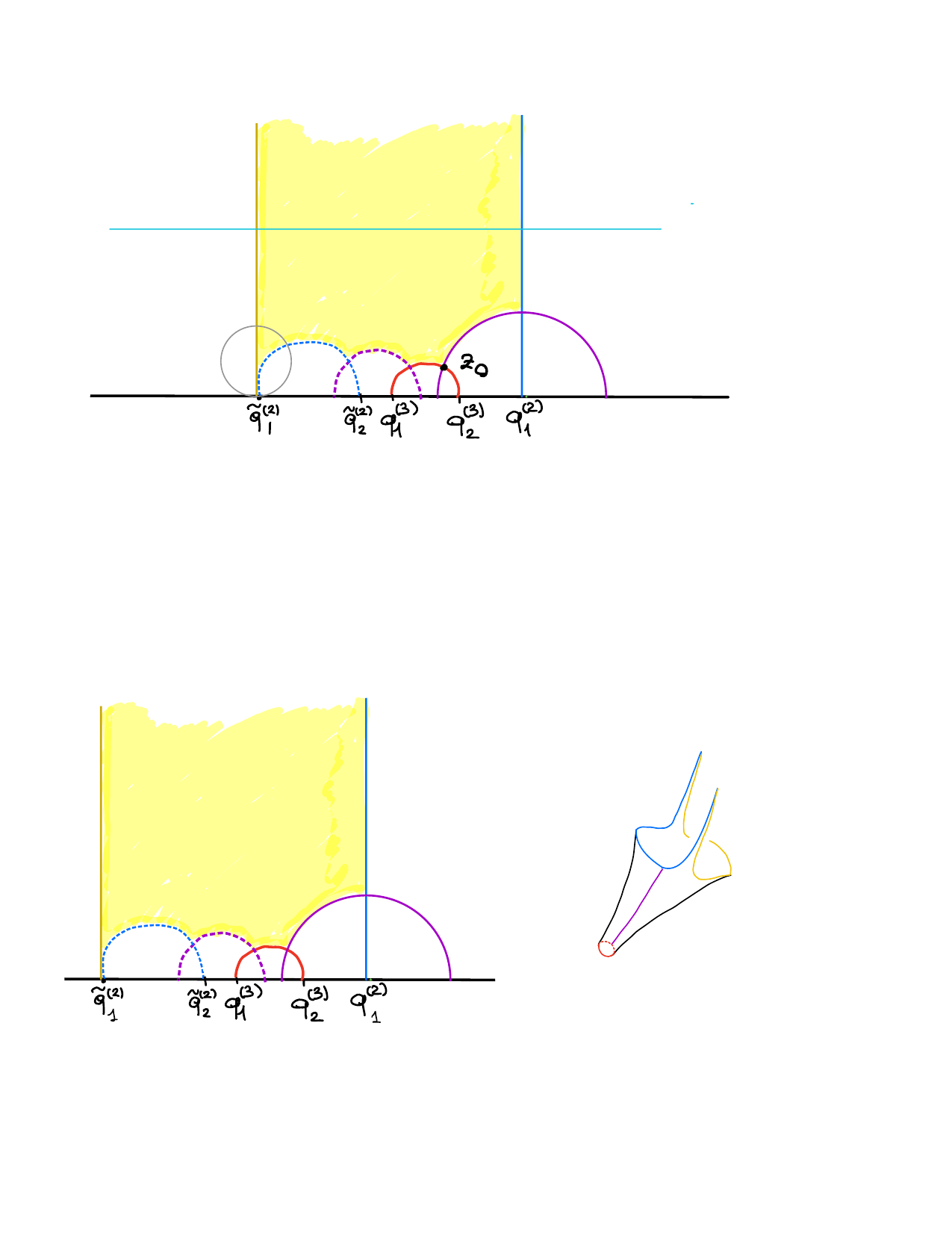}
		\caption{On the left, the yellow region corresponds to the finite part of a pair of pants. On the right we identify the two purple geodesics to produce a cylinder with two bordered cusps on one of the boundaries.}\label{fig:cusped-case}
	\end{figure}   
	
	Note that the two elements $\gamma_1$ and $\gamma_2$ don't survive the limit as elements of $\mathbb P SL_2(\mathbb R)$:
	$$
	\gamma_1(z)\mapsto\tilde  q_2^{(2)},\quad 
	\gamma_2(z)\mapsto \infty,
	$$
	hence we only have one element left, which identifies the dashed and solid purple geodesics. By identifying these geodesics, from the yellow region on the left hand side of \Cref{fig:cusped-case},  we obtain a sphere with two holes, one of which has two bordered cusps as displayed on the right-hand side of \Cref{fig:cusped-case}. 
	
	Note that again we can triangulate this surface by infinite arcs, see 
	\Cref{fig:cusped-tri}. 
	
	\begin{figure}[!htb]
		\centering
		\includegraphics[width=0.8\textwidth]{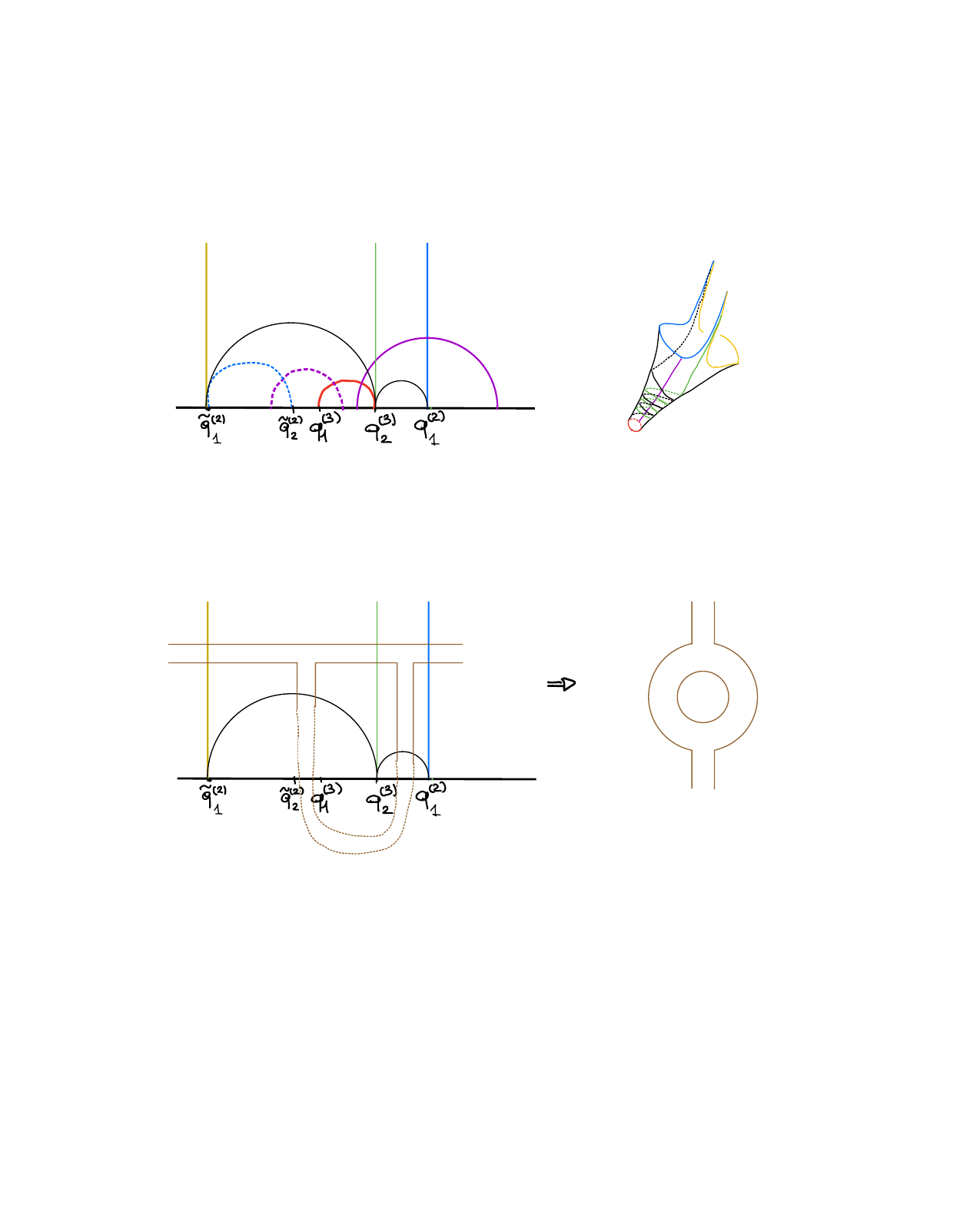}
		\caption{On the right, we have two infinitely winding geodesics in green and in black that triangulate the surface. On the left, we have drawn the corresponding geodesics in $\mathbb H$ in the same colors.}\label{fig:cusped-tri}
	\end{figure}   
	
	We now obtain a region made up of two ideal triangles that are indeed dual to the fat-graph obtained in \Cref{fig:chewing-graph}
	see \Cref{fig:cusped-fat}.
	
	\begin{figure}[!htb]
		\centering
		\includegraphics[width=0.7\textwidth]{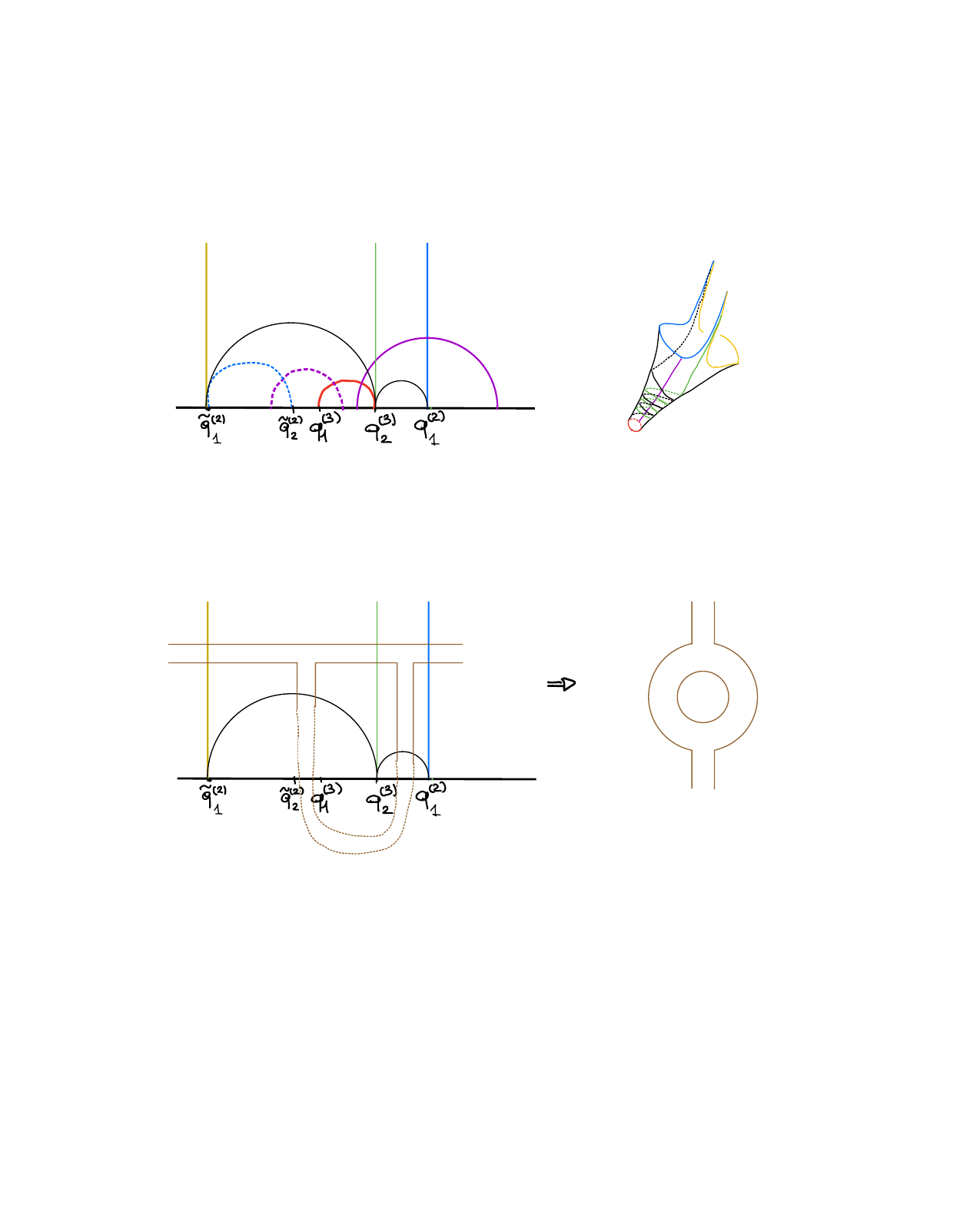}
		\caption{On the left, the ideal triangulation in $\mathbb H$ and its dual fat-graph which is the same as the one on the right, namely, the fat-graph obtained in \Cref{fig:cusped-case}.}\label{fig:cusped-fat}
	\end{figure}   
	
	Observe that now the ideal triangulation contains segments on the boundary.

\end{example}

Let us now explain how to determine a metric on $\Sigma_{g,s,m}$ uniquely. Namely, how to fix a conformal map from $\Sigma^f_{g,s,m}$ to a region in $\mathbb H$ whose quotient by the action of a suitable group gives $\Sigma^f_{g,s,m}$.

Again, rather than the general case, we focus on our favorite example.

\begin{example}\label{ex:cyl-lenght}
	As we saw in Example \ref{ex:s3toinf-1}, by colliding two holes in a pair of pants we obtain a cylinder with two bordered cusps on one of its boundaries. All bottle neck geodesics are destroyed except for one.
	The information about the length of the only surviving bottle-neck geodesic is not sufficient to define the metric uniquely in our cylinder. Let us see what extra data is needed in order to fix the metric on the two-cusped cylinder.
	
	First, let us show that the information about the position of one of the bordered cusps and the length of the red bottle neck curve are enough to determine the yellow region in \Cref{fig:cusped-case} uniquely. To see this, we proceed in a similar way to the proof of Lemma \ref{lm:hexagon}. Let us start by fixing a solid purple geodesic and a point $z_0$ on it - this can be done without loss of generality thanks to the action of $\mathbb PSL_2(\mathbb R)$.
	Pick the unique red geodesic orthogonal to the purple one at $z_0$. Cut a segment of length $l$ from $z_0$ on the red geodesic and pick the unique orthogonal geodesic at the end point of this segment (dashed purple). Now assume we know the position of one of the two cusps, for example let us place this cusp at infinity. 
	Then there is a unique solid blue geodesic from the cusp at infinity orthogonal to the solid purple geodesic. The solid blue geodesic cuts a unique segment on the purple geodesic. We now cut a segment on the dashed purple geodesic of the same length. This allows to pick a unique dashed blue geodesic orthogonal to the dashed purple one. Finally we pick uniquely the yellow geodesic connecting infinity with the intersection of the dashed blue geodesic with the real axis.
	
	This shows that the yellow region  in \Cref{fig:cusped-case} is uniquely determined by picking the position of one cusp and the length $l$ of the bottle neck geodesic. 
	
	However, we still have a problem: in the yellow region, we have infinitely long geodesics because we have the two bordered cusps on the absolute, therefore any geodesic starting or terminating in any of these bordered cusps will have infinite length.  In order to still be able to uniquely determine the length of all geodesics in the yellow area in \Cref{fig:cusped-case}, we need to fix two horocycles as in  
	\Cref{fig:cusped-case-1}. 
	
	\begin{figure}[!htb]
		\centering
		\includegraphics[width=0.7\textwidth]{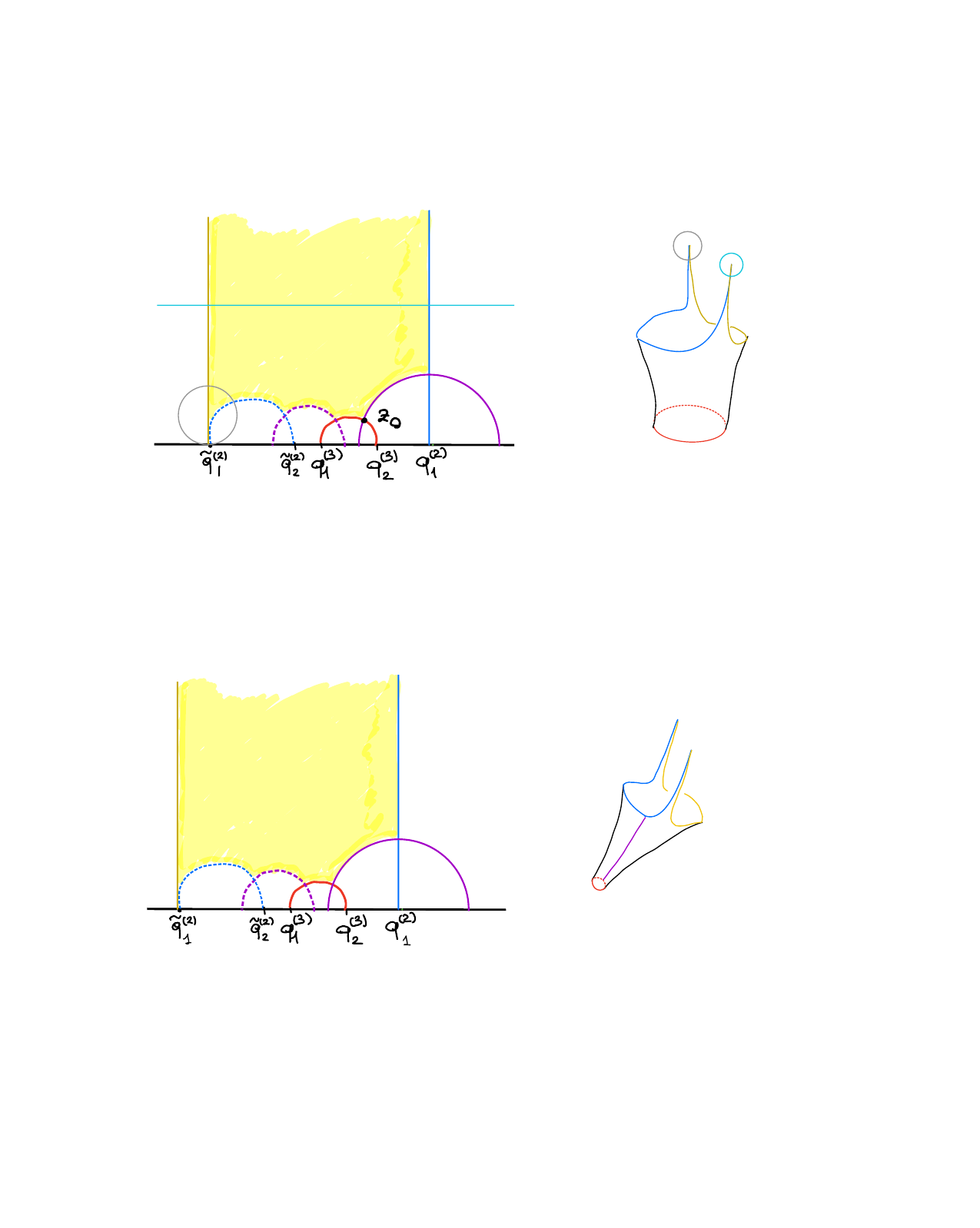}
		\caption{Two horocycles, one at infinity in cyan and one at $\tilde q_1^{(2)}$ shown on the left in $\mathbb H$ and on the right on the cylinder with two bordered cusps, giving rise to a ``candle cake''.}\label{fig:cusped-case-1}
	\end{figure}   
	Fixing the Euclidean diameters of the two horocycles once and for ever, we can determine the length of the portion of the infinite geodesics outside the two horocycles. This quantity is called $\lambda$\textbf{-length} and was introduced by Penner.
	
	In conclusion, to completely fix the metric in the yellow area in \Cref{fig:cusped-case} and the lengths of all geodesics in it, we need $4$ real parameters: the length of the bottle neck geodesic, the position of one of the cusps and the two Euclidean diameters of the horocycles.
	
	In \cite{ChM} it was proved that the Euclidean diameters of these horocycles are related to $k_1,k_2$ in \eqref{eq:s3k1k2}. 
\end{example}

Generalizing, colliding $2$ holes in  $\Sigma_{g,s}^f$, we produce $\Sigma_{g,s-1,2}^f$. 
The fat-graph of  $\Sigma_{g,s}$ had $6 g-6 + 3 s$ edges. In the collision, we open one edge and replace its coordinate by two pinning variables associated to the horocycles, so that the resulting fat graph  has $6 g-6 + 3 s+1 =6 g-6 + 3 (s-1)+4$ edges of which $2$ are open.

Therefore we expect that the analogue of the Teichm\"uller space for $\Sigma_{g,s-1,2}$  should have dimension 
$6 g-6+3 s+2 m$. In the next section, we are going to explain what is the correct notion of the analogous of the Teichm\"uller space for $\Sigma_{g,s,m}$.

\subsection{Bordered cusped Teichm\"uller space}

In this subsection, we explain how to modify the 
definition of Teichm\"uller space as 
$$\opn{Hom}'(\pi_1(\Sigma_{g,s}), \mathbb P SL_2(\mathbb R))/\mathbb P SL_2(\mathbb R),
$$
when we have a surface with marked boundary. First of all, the fundamental group is not enough to distinguish between $\Sigma_{g,s,m}$ and $\Sigma_{g,s}$. Therefore, we replace it with the set of homotopy classes of paths that start and end in two (not necessarily distinct) bordered cusps. This set forms a groupoid under path composition, and it is called \textbf{fundamental groupoid} $\pi_1(\Sigma,P)$, where $P$ is the set of bordered cusps. 

We still want to take representations of the fundamental groupoid in $\mathbb P SL_2(\mathbb R)$. However, while before the Procesi coordinates were equivalent to lengths of closed geodesics, and therefore we could take a quotient by $\mathbb P SL_2(\mathbb R)$ (whose action preserves these lengths), now our coordinates are given by $\lambda$-lengths of infinite arcs that start and end at some bordered cusps. These lengths are uniquely determined only by fixing a horocycle at each bordered cusp once and forever. So, we can't act by conjugation of the whole $\mathbb P SL_2(\mathbb R)$, because this would mess up the horocycles, we need to act by multiplication by parabolic elements that preserve the horocycle at the origin of the arc and the one at the end (see \Cref{lm:hor-par}):
$$
\gamma\mapsto \gamma_s\cdot\gamma\cdot \gamma_t,
$$
where $\gamma_s$ and $\gamma_t$ are two parabolic elements.

In other words, we give the following definition:
the \textbf{bordered cusped Teichm\"uller space} is the following set:
$$
\mathcal T_{g,s,m}:=\opn{Hom}'(\pi_1(\Sigma_{g,s,m},P), \mathbb PSL_2(\mathbb R))/U_P,
$$
where $P$ is the set of bordered cusps and $U_P:=\sqcup_{p\in P} U_p$, each $U_p$ being the  subgroup of  parabolic elements in $\mathbb PSL_2(\mathbb R)$.

Let us show that the dimension of $\mathcal T_{g,s,m}$ is what we predicted at the end of the previous subsection, namely $6 g-6+3 s+2 m$. 
In fact, let us fix a bordered cusp as base point, say $p_0$, then we have $2g$ matrices for the usual $A$- and $B$-cycles starting and terminating at $p_0$, $s-1$ matrices corresponding to going around all
holes except the one to which the cusp $p_0$ belongs, $m-1$ matrices corresponding to paths
starting at $p_0$ and terminating at other cusps. Each matrix depends on three independent
complex coordinates, giving $3(2g + s- 1 + m- 1)$, by taking the quotient by $U_P$ we obtain  $6 g-6+3 s+2 m$.

We are now going to discuss what is the coordinate ring of
$\mathcal T_{g,s,m}$. In \Cref{suse:procesi}, we saw that, in the absence of bordered cusps, the coordinate ring was generated by the Fricke-Vogt coordinates, which arise as traces of generating matrices and their products. The trace was a natural operation to take because $\mathcal T_{g,s}$ was obtained from $\opn{Hom}'(\pi_1(\Sigma_{g,s},\mathbb P), SL_2(\mathbb R))$ by taking the quotient by $\mathbb PSL_2(\mathbb R)$. The action of $\mathbb PSL_2(\mathbb R)$ on the representing matrices is conjugation, and the trace is invariant under conjugation.

In the presence of bordered cusps, 
the conjugation by $\mathbb PSL_2(\mathbb R)$ is replaced by multiplication by a parabolic element associated to the bordered cusp at the beginning of the path and a parabolic element associated to the bordered cusp at the end of the path. 
When initial and final points are different, the two  parabolic elements are different (because they have different fixed points) therefore the trace is replaced by another operation that must be invariant under this mixed conjugation action. If we generate the parabolic subgroup by lower triangular elements, then the invariant operation is the one picking the element in position $12$ of a matrix.
We therefore introduce the following
\begin{equation}\label{eq:trK}
	\opn{Tr_K}(A):= \opn{Tr}(A K),\,
	\text{ where }\,
	K=\left(\begin{array}{cc}
		0&0\\
		-1&0\\
	\end{array}
	\right).
\end{equation}
We can also motivate the introduction of the matrix $K$ by observing that if the fat-graph goes around a boundary as in \Cref{fig:k-matrix},
then any geodesic containing the blue segment in \Cref{fig:k-matrix} will be represented by a matrix of the form 
$$
\dots R X(s) R \dots.
$$

\begin{figure}[!htb]
	\centering
	\includegraphics[scale=1]{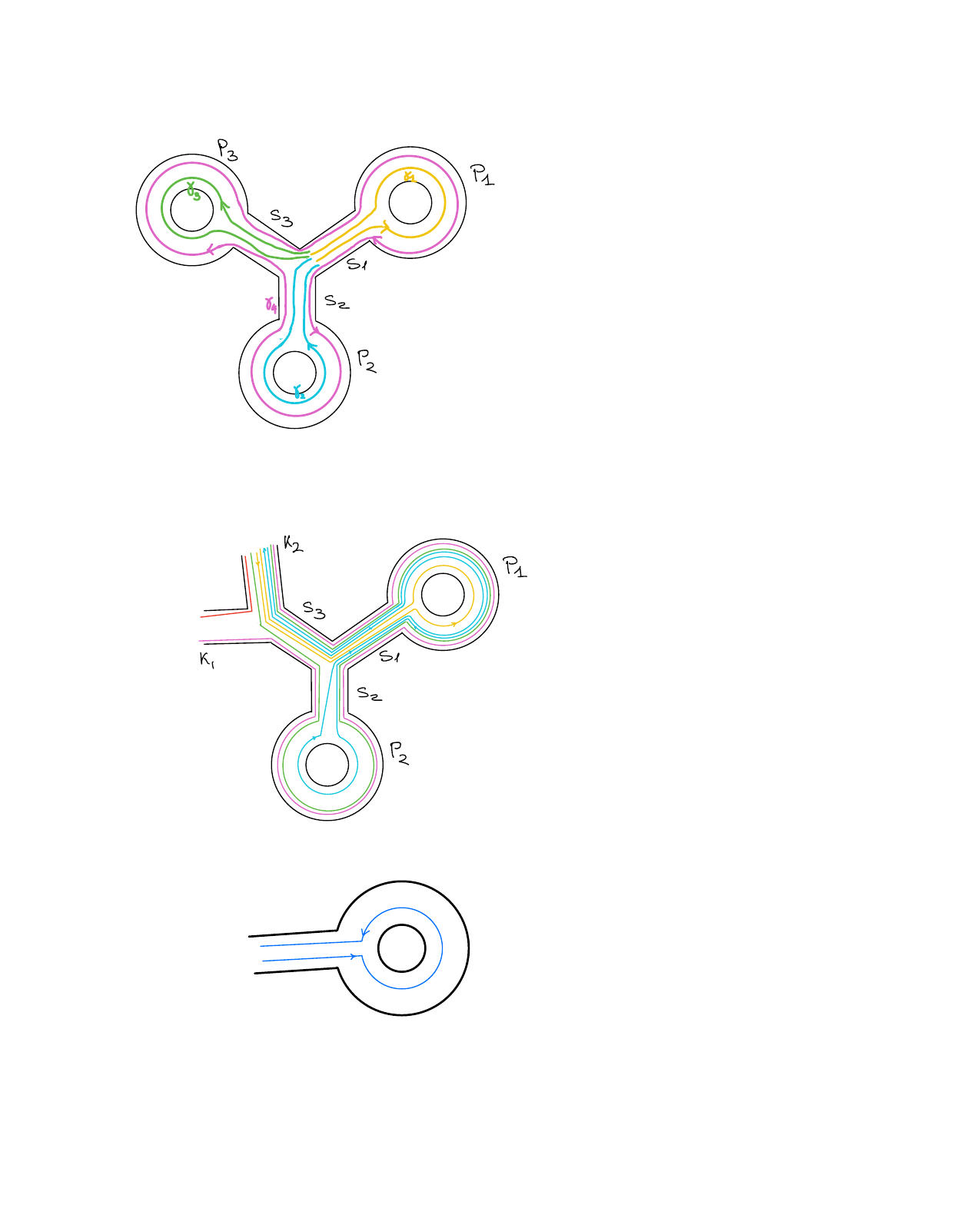}
	\caption{A generic geodesic (in blue) circling a hole.}\label{fig:k-matrix}
\end{figure}

Setting
$$
s=k_1+k_2- \log(\epsilon),
$$
one obtains
$$
\dots R X(s) R \dots = \dots R X(k_1+k_2)S X(-\log(\epsilon)) R \dots =
\dots R X(k_1+k_2)\left(\frac{1}{\epsilon} K +\mathcal O(\epsilon)\right)\dots.
$$    
Recall that $S$ was defined in \eqref{eq:FG-matrices}.

\begin{example}\label{ex:PV}
	Consider the sphere with $4$ boundaries in Example \ref{ex:PVI}, and substitute
	$$
	p_3=k_1+k_2- \log(\epsilon),
	$$
	in \eqref{eq:x-s}. Taking the limit as $\epsilon\to 0$, we produce a sphere with three boundaries one of which carries two bordered cusps as in \Cref{fig:candlePV}.
    
\begin{figure}[!htb]
	\centering
	\includegraphics[scale=1]{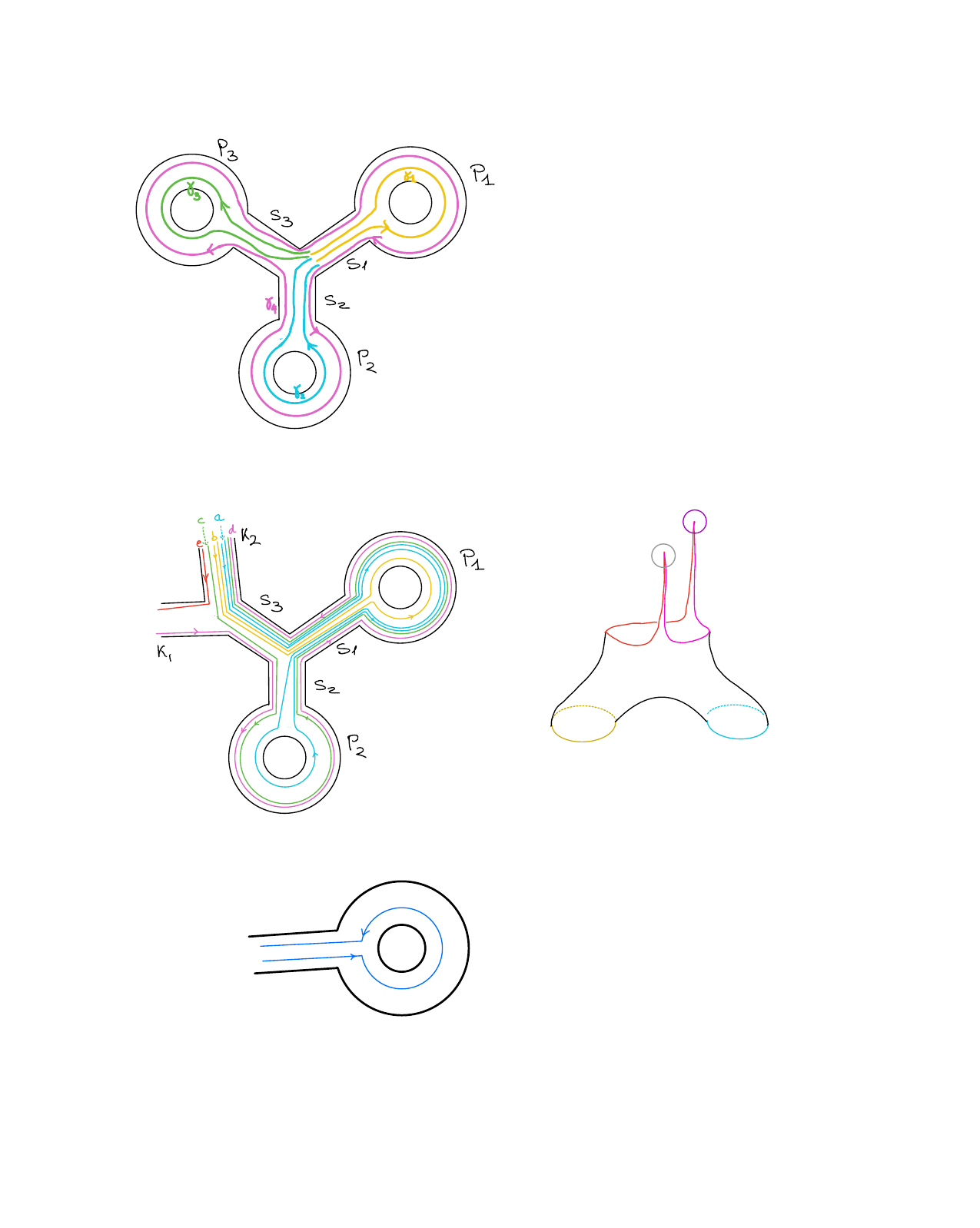}
	\caption{The candle cake corresponding to a sphere with three boundaries one of which carries two bordered cusps.}\label{fig:candlePV}
\end{figure}

Since $\hat s_i=s_i+\frac{p_i}{2}$, we will have also
	$$
	\hat s_3= s_3 + k_1+k_2- \log(\epsilon),
	$$
	therefore calling $\tilde s_3= s_3 + k_1+k_2$, we obtain:
	\begin{equation}\label{eq:x-s-limit}
		\begin{split}
			x_1= - \frac{1}{\epsilon}e^{\hat s_2+\tilde s_3}-  \frac{1}{\epsilon} e^{-\hat s_2+\tilde s_3} - \epsilon e^{-\hat s_2-\tilde s_3}-\left(\frac{1}{\epsilon}e^{\frac{k_1+k_2}{2}}+\epsilon e^{-\frac{k_1+k_2}{2}}\right)e^{-\hat s_2}-\frac{1}{\epsilon}\left(e^{\frac{p_2}{2}}+e^{-\frac{p_2}{2}}\right)e^{\tilde s_3}     ,\\
			x_2= - \frac{1}{\epsilon}e^{\tilde s_3+\hat s_1}- \epsilon e^{-\tilde s_3+\hat s_1} -\epsilon e^{-\tilde s_3-\hat s_1}-\epsilon\left(e^{\frac{p_1}{2}}+e^{-\frac{p_1}{2}}\right)e^{-\tilde s_3}-\left(\frac{1}{\epsilon}e^{\frac{k_1+k_2}{2}}+e^{-\epsilon\frac{k_1 + k_2}{2}}\right)e^{\hat s_1}     ,\\
			x_3= - e^{\hat s_1+\hat s_2}-  e^{-\hat s_1+\hat s_2} - e^{-\hat s_1-\hat s_2}-\left(e^{\frac{p_2}{2}}+e^{-\frac{p_2}{2}}\right)e^{-\hat s_1}-\left(e^{\frac{p_1}{2}}+e^{-\frac{p_1}{2}}\right)e^{\hat s_2}     ,\\
			G_i= -e^{\frac{p_i}{2}}-e^{-\frac{p_i}{2}},\quad\text{for }\, i=1,2,\quad  G_3 = \frac{1}{\epsilon}e^{\frac{k_1+k_2}{2}} +\epsilon e^{-\frac{k_1+k_2}{2}},\\
			G_4= - \frac{1}{\epsilon} e^{\hat s_1+\hat s_2 + \tilde s_3}-\epsilon e^{-\hat s_1-\hat s_2 - \tilde s_3}.\\
		\end{split}
	\end{equation}
	Taking the limits
	$$
	\tilde x_1 := \lim_{\epsilon\to 0}\epsilon x_1, \quad \tilde x_2 := \lim_{\epsilon\to 0}\epsilon x_2,\quad
	\tilde G_3 := \lim_{\epsilon\to 0}\epsilon G_3,\quad \tilde G_4 := \lim_{\epsilon\to 0}\epsilon G_4,
	$$
	and setting $\tilde x_3=x_3$, $\tilde G_1=G_1$, and $\tilde G_2=G_2$, we obtain that $\tilde x_1,\tilde x_2,\tilde x_3,\tilde G_1,\tilde G_2,\tilde G_3,\tilde G_4$ satisfy the following cubic relation
	\begin{equation}
		\label{eq:FrickePV}
		\begin{split}
			&\tilde x_1\tilde  x_2\tilde  x_3 + \tilde x_1^2+\tilde x_2^2 -(\tilde G_4 \tilde G_1+\tilde G_2 \tilde G_3) \tilde x_1 -(\tilde G_4 \tilde G_2+\tilde G_1 \tilde G_3) \tilde x_2 -(\tilde G_4 \tilde G_3) \tilde x_3 +\\
			&\tilde G_3^2 +\tilde G_4^2 + \tilde G_1 \tilde G_2 \tilde G_3 \tilde G_4=0.
		\end{split}
	\end{equation}
	Relation \eqref{eq:FrickePV} defines a $3$-parameter pencil of affine cubic surfaces in $\mathbb R^3$. To see that \eqref{eq:FrickePV} depends only on $3$ parameters, we can rescale $\tilde x_1, \tilde x_1, \tilde G_4, \tilde G_3$ by say $\tilde G_3^{-1}$ to eliminate the parameter $\tilde G_3$.
	
	Note that now we can't claim that the $3$-parameter affine cubic pencil \eqref{eq:FrickePV} is the bordered cusped Teichm\"uller space because the latter is $7$-dimensional, while \eqref{eq:FrickePV} is $5$ dimensional. This fact can also be seen by observing that the shear coordinates $s_1,s_2,s_3,p_1,p_2,k_1,k_2$ can't be obtained in terms of $\tilde x_1,\tilde x_2,\tilde x_3,\tilde G_1,\tilde G_2,\tilde G_3,\tilde G_4$ by inverting relations \eqref{eq:x-s-limit} because, for example, $k_1$ and $k_2$ only appear as linear combination $k_1+k_2$.
\end{example}

Due to the fact that, as seen in the above Example, the limits of Fricke-Vogt coordinates are not enough to coordinatize the  bordered cusped
Teichm\"uller space, in \cite{ChM}
a complete combinatorial description of the bordered cusped
Teichm\"uller space was given by introducing the notion of \textbf{maximal cusped lamination}, a collection of 
geodesic arcs between bordered cusps and closed geodesics homotopic to the boundaries, such that they have no intersections nor
self-intersections in the interior of a Riemann surface, but can be incident to the same bordered cusp, and such that they
triangulate the Riemann surface. More formally:

\begin{definition}\label{def:geom-laminations}
	We call 
	\emph{cusped geodesic lamination} (CGL) on a bordered cusped Riemann surface a set of nondirected curves up to a homotopic equivalence such that
	\begin{itemize}
		\item[(a)] these curves are either closed curves ($\gamma$) or \emph{arcs} ($\mathfrak a$) that start and terminate at bordered cusps 
		(which can be the same cusp);
		\item[(b)] these curves have no (self)intersections inside the Riemann surface (but can be incident to the same bordered cusp);
		\item[(c)] these curves are not empty loops or empty loops starting and terminating at the same cusp.
	\end{itemize} 
\end{definition}

In \cite{ChM} it was proved that,
for any surface $\Sigma_{g,s,m}$ of genus $g$ with $s$ boundaries and $m\geq 1$ bordered cusps,
there exists a complete cusped geodesic lamination consisting of $6g-  6 + 3s + 2m$ geodesic arcs and that
the lengths of the portions of the geodesic arcs contained between the fixed horocycles at their endpoints coordinatize the bordered cusped Teichm\"uller space. We have seen this in the case of the cylinder with two bordered cusps on one boundary, see \Cref{ex:cyl-lenght}, and now we illustrate this construction in the case of a Riemann sphere with three boundaries and two bordered cusps on one boundary.

\begin{example}\label{ex:PV-lam}
	In Example \ref{ex:PV}, the case of a Riemann sphere with three boundaries and two bordered cusps on one boundary was obtained by taking the limit as $\epsilon\to 0$ after the substitution $p_3=k_1+k_2-2\log(\epsilon)$. The corresponding fat-graph and lamination are shown in \Cref{fig:PVlamination}. 
	\begin{figure}[!htb]
		\centering
		\includegraphics[width=0.4\textwidth]{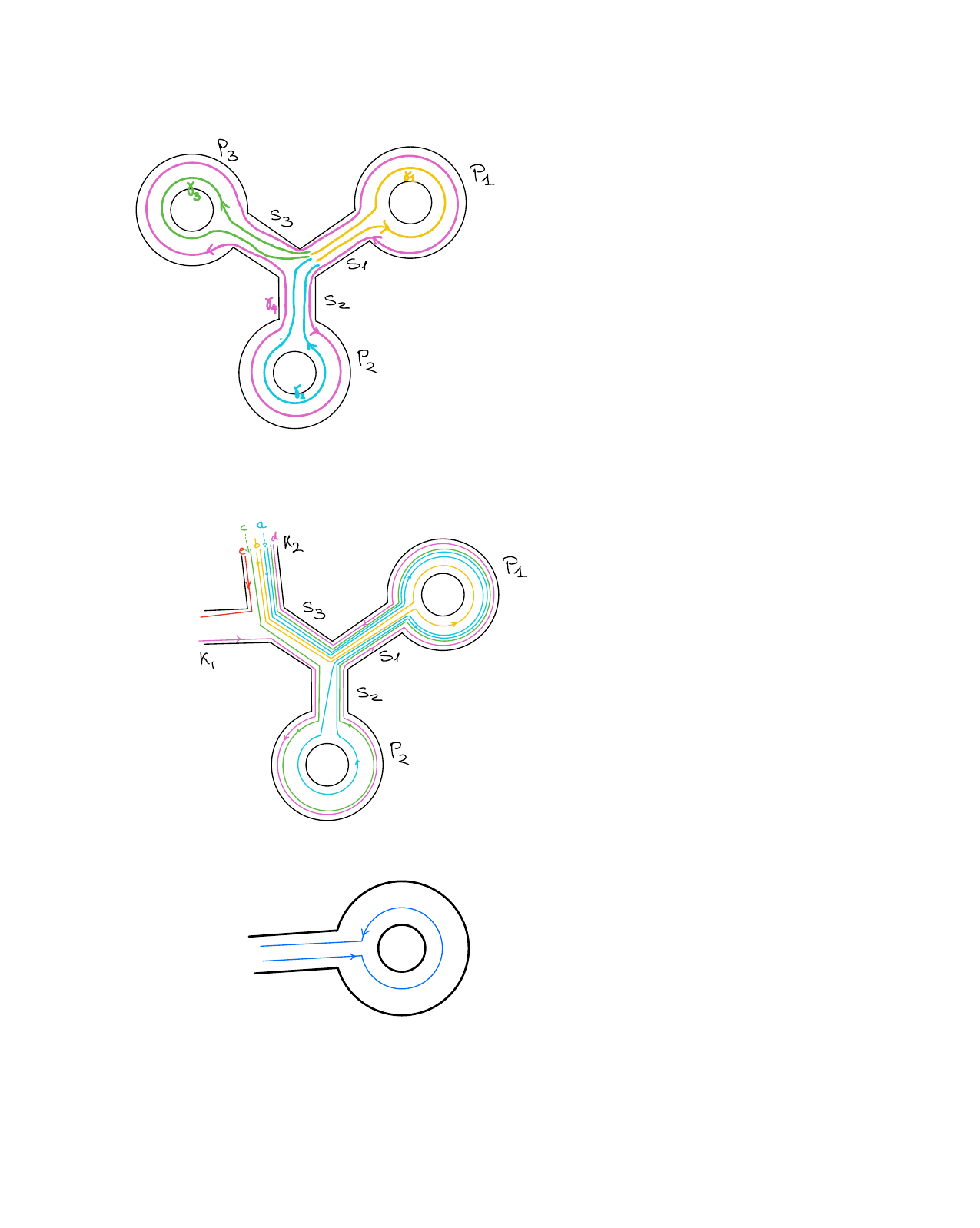}
		\caption{Fat graph of $\Sigma_{0,3,2}$. The associated shear coordinates are labeled by $s_1,s_2,s_3,k_1,k_2,p_1,p_2$. The lamination is composed by two loops around the two boundaries with no bordered cusps and five geodesic arcs denoted by $a,b,c,d,e$.}\label{fig:PVlamination}
	\end{figure}

	Calculating the matrices corresponding to the five geodesic arcs and taking the trace-$K$, we obtain:
	\begin{equation}
		\label{eq:lam-shear}
		\begin{split}
			\lambda_a=&\opn{Tr}_K\left( X(k_ 2) RX(s_ 3) RX(s_ 1) RX(p_ 1) RX(s_ 1) RX(s_ 2) RX(p_ 2) RX(
			s_ 2)\right.\\
			& \left. LX(s_ 1) LX(p_ 1) LX(s_ 1) LX(s_ 3) L X(k_ 2) \right)=e^{k_2+ p_1+\frac{p_2}{2}+2 s_1+s_2+s_3},\\
			\lambda_b=&\opn{Tr}_K\left(X(k_ 2) RX(s_ 3) RX(s_ 1) RX(p_ 1) RX(s_ 1) LX(s_ 3) LX(k_ 2) \right)=e^{k_2+\frac{p_1}{2}+s_1+s_3}\\
			\lambda_c=&\opn{Tr}_K\left(X(k_ 2) RX(s_ 3) RX(s_ 1) RX(p_ 1) RX(s_ 1) RX(s_ 2) RX(p_ 2)\right.\\
             &\left. RX(
			s_ 2) RX(s_ 3) LX(k_ 2) \right)=e^{k_2+\frac{p_1}{2}+\frac{p_2}{2}+s_1+s_2+s_3}\\
			\lambda_d=&\opn{Tr}_K\left(X(k_ 2) RX(s_ 3) RX(s_ 1) RX(p_ 1) RX(s_ 1) RX(s_ 2) RX(p_ 2) RX(
			s_ 2) \right.\\
            &\left. RX(s_ 3) RX(k_ 1) K\right)=e^{\frac{k_1}{2}+\frac{k_2}{2}+\frac{p_1}{2}+\frac{p_2}{2}+s_1+s_2+s_3}\\
			\lambda_e=&  \opn{Tr}_K\left(X(k_ 2) RX(k_ 1) \right)=e^{\frac{k_1}{2}+\frac{k_2}{2}},
		\end{split}
	\end{equation}
	while the lengths of the two loops around the boundaries that don't have bordered cusps are given by
	$$
	G_1= \opn{Tr}(R X(p_1)R)= e^{\frac{p_1}{2}}+ e^{-\frac{p_1}{2}},\quad
	G_2= \opn{Tr}(R X(p_2)R)= e^{\frac{p_2}{2}}+ e^{-\frac{p_2}{2}}.
	$$
	
	Note that all $\lambda$-lengths are now monomials in the exponentiated  shear coordinates and therefore we can invert all formulae, namely we can express the shear coordinates in terms of $\lambda$-lengths. 
\end{example}

In Example \ref{ex:PV-lam}, we observed that all $\lambda$-lengths of arcs in the geodesic lamination are actually  monomials in the exponentiated  shear coordinates. This is a general fact \cite{ChM}. 
\section{Fock Goncharov theory}\label{se:FG}

The basic idea underlying the Fock-Goncharov coordinatization is very similar to what we have seen so far: we triangulate the surface by ideal triangles and decompose any path (up to homotopy) in the surface into sections according to which triangle it crosses and how. We always assume paths are in general position with respect to the triangulation, so that they cross edges transversely and do not pass through vertices. Each triangle contributes to a matrix factor (which depends on the way in which the path crosses it), and the ordered multiplication of all these factors gives us the matrix associated to the given path. The advantage of Fock-Goncharov theory is that it allows to associate matrices in any simple real Lie group $G$. For the purposes of these notes, we restrict to $G=\mathbb PSL_n(\mathbb R)$. 

In order to understand how each triangle contributes to a matrix factor, let us focus on a single triangle $\triangle_{123}$ of vertices labeled $1,2,3$ in clockwise direction as in \Cref{fig:transport}:

\begin{figure}[!htb]
	\centering
	\includegraphics[scale=1.5]{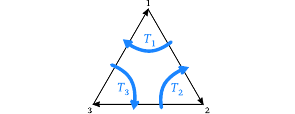}
	\caption{The triple of transport matrices on the oriented triangle $\triangle_{123}$. $T_1$ corresponds to the map of oriented sides $12\mapsto31$, $T_2$ to $23\mapsto12$ and $T_3$ to $31\mapsto23$.}\label{fig:transport}
\end{figure}

In this section, we explain how to construct the three transport matrices $T_1,T_2,T_3\in\mathbb PSL_n(\mathbb R)$ corresponding to crossing the triangle as in  \Cref{fig:transport}. By definition, these matrices will be such that  $T_1T_2T_3=\mathbb I$.

Before doing so in general, let us think for a moment about the example of the pair of pants. We saw that the pair of pants is triangulated by oriented ideal triangles. 
Because the upper half-plane $\mathbb H$ arises as one of the two connected components of 
$\mathbb P\mathbb C^1\setminus \mathbb P\mathbb R^1$, we can interpret the absolute as the real projective line  - pragmatically, any point $x$ of $\mathbb R$ is interpreted as $(x:1)$ in homogeneous coordinates, and $\infty$ corresponds to $(1:0)$. 
Therefore,  the images of the vertices of the ideal triangulation in $\mathbb H$ naturally define points in $\mathbb P\mathbb R$. We can think of these points as lines in $\mathbb R^2$, so that at each vertex of the triangulation, we associate a one dimensional subspace of $\mathbb R^2$, or, in other words, a complete flag.
More generally, in any rank, a complete flag is defined as follows:

\begin{definition}
	A \emph{complete flag} $F_\bullet$ in a vector space $V$ of dimension $n$ is a collection of consecutively embedded subspaces
	\begin{equation*}
		\{0=F_0 \subset F_1 \subset \ldots \subset F_{n-1} \subset F_n=V\},\quad \dim(F_k)=k.
	\end{equation*}
\end{definition}

In Fock Goncharov theory, a complete flag in $\mathbb R^n$ is assigned to every vertex of the triangulation and it is required that these flags are in general position, according to the following definition:

\begin{definition}
	\label{df:gen-pos}
	Let $V$ be a vector space of dimension $n$ over $\mathbb R$. 
	Given  three flags $F^{(1)}_\bullet, F^{(2)}_\bullet,F^{(3)}_\bullet$ in $V$, they are said to be in \textbf{general position} if they are pairwise transverse and 
	\begin{equation}
		\label{mindimkflags}
		\operatorname{dim} \left( \bigcap_{j = 1}^3 F^{(j)}_{i_j} \right)  = \operatorname{max}\left\{ \sum_{j=1}^3 \dim(F^{(j)}_{i_j}) - 2 n, 0 \right\},
	\end{equation}
	for $1 \leq i_j \leq n$.
\end{definition}

Note that given three flags $F^{(1)}_\bullet, F^{(2)}_\bullet,F^{(3)}_\bullet$ in general position, then any two of them are pairwise transverse, namely  
\begin{equation}
	\label{mindimkflags}
	\operatorname{dim} \left(  F^{(k)}_{i}\cap F^{(l)}_{j} \right)  = \operatorname{max}\left\{ \dim(F^{(k)}_{i})+\dim( F^{(l)}_{j}) - n, 0 \right\},
\end{equation}
for $1 \leq i_j \leq n$ for all $j$ and for any $k\neq l, k,l \in \{1,2,3\}$.

\begin{example}\label{ex:2x2flags}
	Take any of the triangles in \Cref{fig:winding}, for example the one of vertices $q^{(1)}_1,\gamma_2(q_2^{(3)}),q_2^{(2)}$ see formulae \eqref{eq:fixpts}. This allows to assign the following flags in $\mathbb R^2$ to these vertices:
	$$
	F_1^{(1)}=\left\langle\left(
	\begin{array}{c}
		-\frac{e^{s_3}(1+e^{s_2})}{e^{s_2+s_3}-1} \\
		1
	\end{array}
	\right)\right\rangle \subset F_2^{(1)}=\mathbb R^2,
	$$
	$$
	F_1^{(2)}=\left\langle\left(
	\begin{array}{c}
		\frac{e^{s_1+s_3}(1+e^{s_2})}{1+e^{s_1}}  \\
		1
	\end{array}
	\right)\right\rangle \subset F_2^{(2)}=\mathbb R^2,
	$$
	$$
	F_1^{(3)}=\left\langle\left(
	\begin{array}{c}
		\frac{e^{s_1+s_3}-1}{1+e^{s_1}} \\
		1
	\end{array}
	\right)\right\rangle \subset F_2^{(3)}=\mathbb R^2,
	$$
	
	These flags are always pairwise transverse for real $s_1,s_2,s_3$. 
\end{example}

\begin{example}\label{ex:3x3flags}
	Consider $\mathbb R^3$ with the canonical basis $e_1,e_2,e_3$ and the three flags
	$$
	0=F^{(1)}_0\subset \langle e_1 \rangle=F^{(1)}_1\subset  \langle e_1, e_2 \rangle=F^{(1)}_2 \subset \mathbb R^3=F^{(1)}_3,
	$$
	$$
	0=F^{(2)}_0\subset \langle e_3 \rangle=F^{(2)}_1\subset  \langle e_3, e_2 \rangle=F^{(2)}_2 \subset   \mathbb R^3=F^{(2)}_3,
	$$
	$$
	0=F^{(3)}_0\subset \langle e_1 +\alpha e_2+\beta e_3 \rangle=F^{(3)}_1\subset  \langle  e_1 +\alpha e_2+\beta e_3 , e_2+\gamma e_3  \rangle=F^{(3)}_2 \subset  \mathbb R^3=F^{(3)}_3,
	$$
	let us see under what conditions they are in generic position.
	First we check pairwise transversality. The flags  $F^{(1)}_\bullet$ and $F^{(2)}_\bullet$ are opposite and therefore transverse.
	The flags $F^{(2)}_\bullet$ and $F^{(3)}_\bullet$ are also transverse for any choice of $\alpha,\beta,\gamma$.
	However, the flags  $F^{(1)}_\bullet$ and $F^{(3)}_\bullet$ 
	are transverse only for $\beta\neq 0$ and $\beta\neq \alpha\gamma$. This is because
	$$
	\dim(F_1^{(3)}\cap F^{(1)}_2)=0 \Leftrightarrow \beta\neq 0
	$$
	and
	$$
	\dim(F_2^{(3)}\cap F^{(1)}_1)=0 \Leftrightarrow \beta\neq \alpha\gamma.
	$$
	Therefore we need to assume  $\beta\neq 0$ and $\beta\neq \alpha\gamma$ to have pairwise transversality. In order to have generic position, we see that we need $\gamma\neq 0$. In fact
	$$
	\dim(F_2^{(1)}\cap F^{(2)}_3\cap F^{(3)}_3)=0 \Leftrightarrow \gamma\neq 0.
	$$
\end{example}

\begin{remark}\label{rmk:gen-flag}
	Observe that any triple of flags in generic position can always be brought by global conjugation in the form of Example \ref{ex:3x3flags}.
\end{remark}

For each edge of the triangle, the two complete transversal flags at its extrema uniquely determine a canonical choice of lines $\Lambda_1,\dots,\Lambda_n$ that give a splitting of $\mathbb R^n$:

\begin{lemma}
	\label{lemma2flagsDecomposition}
	Let $F^{(1)}, F^{(2)}$ be two complete flags in generic position.
	Then, there exists a unique splitting of $\mathbb R^n$ into lines 
	\begin{equation}
		\label{VdecompositionW}
		\mathbb R^n= \Lambda_1 \oplus \dots \oplus \Lambda_n,
	\end{equation}
	such that, for any $1 \leq i \leq n$,
	\begin{equation}
		\label{flagsF&G}
		\begin{aligned}
			F^{(1)}_i &= \Lambda_1 \oplus \dots \oplus \Lambda_i,\\
			F^{(2)}_{i} &= \Lambda_{n} \oplus \dots \oplus \Lambda_{n-i+1}.
		\end{aligned}
	\end{equation}
\end{lemma}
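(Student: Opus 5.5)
The natural candidate is to define the lines as intersections,
\[
\Lambda_i := F^{(1)}_i \cap F^{(2)}_{n-i+1}, \qquad 1\le i\le n,
\]
and check that this choice works and is forced. Throughout we use only pairwise transversality of $F^{(1)}_\bullet$ and $F^{(2)}_\bullet$, in the form of the dimension formula \eqref{mindimkflags} for two flags. Since $\dim F^{(1)}_i + \dim F^{(2)}_{n-i+1} = n+1$, transversality gives $\dim \Lambda_i = 1$, so each $\Lambda_i$ is indeed a line.

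To establish \eqref{flagsF&G}, we first note that $\Lambda_j\subset F^{(1)}_j\subset F^{(1)}_i$ for every $j\le i$, so $\Lambda_1+\dots+\Lambda_i\subset F^{(1)}_i$. The main point is that this sum is direct, i.e. has dimension $i$. We argue by induction on $i$. The case $i=1$ is clear. Assume $\Lambda_1\oplus\dots\oplus\Lambda_{i-1}=F^{(1)}_{i-1}$. Then it suffices to show $\Lambda_i\not\subset F^{(1)}_{i-1}$. But $\Lambda_i\subset F^{(2)}_{n-i+1}$, and by transversality $F^{(1)}_{i-1}\cap F^{(2)}_{n-i+1}$ has dimension $\max\{(i-1)+(n-i+1)-n,0\}=0$. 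Hence $\Lambda_i$ meets $F^{(1)}_{i-1}$ trivially, and $F^{(1)}_i=F^{(1)}_{i-1}\oplus\Lambda_i$. Taking $i=n$ yields the splitting \eqref{VdecompositionW}. The identity for $F^{(2)}$ follows by the symmetric argument: $\Lambda_j\subset F^{(2)}_{n-j+1}\subset F^{(2)}_i$ whenever $j\ge n-i+1$, and the same induction, run from the top index downward, gives $F^{(2)}_i=\Lambda_n\oplus\dots\oplus\Lambda_{n-i+1}$.

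For uniqueness, suppose some splitting $\Lambda'_1\oplus\dots\oplus\Lambda'_n$ satisfies \eqref{flagsF&G}. Then $\Lambda'_i\subset F^{(1)}_i$ and $\Lambda'_i\subset F^{(2)}_{n-i+1}$, so $\Lambda'_i\subset F^{(1)}_i\cap F^{(2)}_{n-i+1}=\Lambda_i$. Both are lines, so $\Lambda'_i=\Lambda_i$.

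The only step that needs care is the directness of the sum, which is exactly where transversality enters; the rest is bookkeeping with indices. A quick alternative check uses \Cref{rmk:gen-flag}: after conjugating so that $F^{(1)}$ is the standard flag $\langle e_1\rangle\subset\langle e_1,e_2\rangle\subset\cdots$ and $F^{(2)}$ is the opposite one $\langle e_n\rangle\subset\langle e_n,e_{n-1}\rangle\subset\cdots$, the lines are simply $\Lambda_i=\langle e_i\rangle$.
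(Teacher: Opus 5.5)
Your argument is correct and complete. It uses only pairwise transversality of the two flags, which is the right reading of ``generic position'' for a pair.

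The paper does not prove this lemma. It only illustrates it in \Cref{ex:2x2flags1} and \Cref{ex:3x3flags1}, where the splittings are read off by hand. Your route gives the general statement together with an explicit formula, $\Lambda_i = F^{(1)}_i\cap F^{(2)}_{n-i+1}$.

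This formula is exactly what the paper implicitly uses later in \Cref{suse:st-b}. There, the line attached to the $i$-th upward tile along side $12$ (counted from vertex $1$) is $\lambda_{n-i,\,i-1,\,0}=F^{(1)}_{i}\cap F^{(2)}_{n-i+1}\cap F^{(3)}_{n}$, which is your $\Lambda_i$. That is why \Cref{ex:3x3flags3} finds that those lines reproduce the splittings of \Cref{ex:3x3flags1}. Your uniqueness step, that any admissible $\Lambda'_i$ must lie in both $F^{(1)}_i$ and $F^{(2)}_{n-i+1}$, is also the cleanest way to see that the splitting is forced.

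One small caveat concerns your closing ``alternative check''. \Cref{rmk:gen-flag} is stated for triples of flags with $n=3$. Moreover, bringing a transverse pair to standard/opposite position amounts to choosing a basis $v_1,\dots,v_n$ adapted to both flags, which is the existence part of the lemma itself. So it works as a sanity check, not as an independent proof.
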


We don't prove this lemma, we show how it works in our two examples.

\begin{example}\label{ex:2x2flags1}
	Consider the three flags $F^{(1)}_\bullet, F^{(2)}_\bullet,F^{(3)}_\bullet$ of Example \ref{ex:2x2flags}. Then,     
	we have the following splittings of $\mathbb R^2$:
	\begin{equation*}
		\begin{split}
			F^{(1)}_\bullet , F^{(2)}_\bullet:& \quad  F^{(1)}_1 \oplus   F^{(2)}_1 \\
			F^{(2)}_\bullet , F^{(3)}_\bullet:& \quad  F^{(2)}_1 \oplus   F^{(3)}_1 \\
			F^{(3)}_\bullet , F^{(1)}_\bullet:& \quad  F^{(3)}_1 \oplus   F^{(1)}_1.
		\end{split}
	\end{equation*}
\end{example}

\begin{example}\label{ex:3x3flags1}
	Consider the three flags $F^{(1)}_\bullet, F^{(2)}_\bullet,F^{(3)}_\bullet$ of Example \ref{ex:3x3flags} with $\beta\neq 0$, $\beta\neq \alpha\gamma$ and $\gamma\neq 0$. Then, we have the following splittings of $\mathbb R^3$:
	\begin{equation}\label{eq:split3x3}
		\begin{split}
			F^{(1)}_\bullet , F^{(2)}_\bullet:&\quad   \langle e_1\rangle\oplus  \langle e_2\rangle \oplus  \langle e_3\rangle\\
			F^{(2)}_\bullet , F^{(3)}_\bullet:&\quad     \langle e_3\rangle\oplus  \langle e_2+\gamma e_3\rangle \oplus  \langle e_1+\alpha e_2+\beta e_3\rangle \\
			F^{(3)}_\bullet , F^{(1)}_\bullet:&\quad   \langle e_1+\alpha e_2+\beta e_3\rangle  \oplus  \langle \gamma e_1+(\alpha \gamma-\beta)e_2\rangle \oplus \langle e_1\rangle
		\end{split}
	\end{equation}
\end{example}

As explained at the beginning of this section, one would like to associate some matrices $T_1,T_2$, $T_3\in \mathbb PSL_n(\mathbb R)$ to paths crossing the triangle. The basic idea is to interpret these matrices as the ones that map the splitting associated to one side to the splitting associated to another side. 
As illustrated in the following example, such matrices will depend on the choice of the basis on each side, not only on the splitting.

\begin{example}\label{ex:3x3flags2}
	Here we compute three matrices that map between the splittings of example \ref{ex:3x3flags1}. In order to make this calculation, we need to pick some bases, for example
	$$
	w_1=\mu_1 e_1,\quad w_2=\mu_2 e_2,\quad  w_3=\mu_3 e_3,
	$$
	$$
	u_1=\eta_1 ( e_1+\alpha e_2+\beta e_3),\quad u_2=\eta_2 (\gamma e_1+(\alpha \gamma-\beta)e_2),\quad  u_3=\eta_3 e_1,
	$$
	$$
	v_1   =\sigma_1 e_3,\quad v_2=\sigma_2(e_2+\gamma e_3), \quad v_3= \sigma_3( e_1+\alpha e_2+\beta e_3).
	$$
	These bases depend on the choice of the real non zero numbers $\mu_1,\mu_2,\mu_3,\nu_1,\nu_2,\nu_3,\sigma_1,\sigma_2,\sigma_3$. We now calculate $T_1$ such that $T_1 w_i^T=u_i^T$, $T_2$ such that $T_2 v_i^T=w_i^T$ and $T_3$ such that $T_3 u_i^T=v_i^T$. They
	are given by\footnote{The choice of acting by $T_i$ or row vectors rather than column vectors will be clearer later when we describe the combinatorial construction.}
	\begin{equation}\label{eq:T1T2T3}
		\begin{split}
			T_1=\left(\begin{array}{ccc}
				\frac{\eta_1}{\mu_1}   & \frac{\eta_1}{\mu_2}\alpha   &  \frac{\eta_1}{\mu_3}\beta \\
				\frac{\eta_2}{\mu_1}\gamma & \frac{\eta_2}{\mu_2}(\alpha\gamma-\beta) & 0\\
				\frac{\eta_3}{\mu_1} & 0&0 \\
			\end{array}\right),\quad
			T_2=\left(\begin{array}{ccc}
				\frac{\mu_1}{\sigma_1}(\alpha\gamma-\beta)&
				-  \frac{\mu_1}{\sigma_2}\alpha&\frac{\mu_1}{\sigma_3}\\
				-\frac{\mu_2}{\sigma_1}\gamma& \frac{\mu_2}{\sigma_2}& 0
				\\
				\frac{\mu_3}{\sigma_1}&0&0\\
			\end{array}\right),\\
			T_3=\left(
			\begin{array}{ccc}
				\frac{\sigma_1}{\eta_1} \frac{1}{\beta} &
				-\frac{\sigma_1}{\eta_2}  \frac{\alpha}{\beta(\alpha\gamma-\beta)}&\frac{\sigma_1}{\eta_3}    \frac{\alpha}{(\alpha\gamma-\beta)}\\
				\frac{\sigma_2}{\eta_1} \frac{\gamma}{\beta}&
				-\frac{\sigma_2}{\eta_2} \frac{1}{\beta}&0\\
				\frac{\sigma_3}{\eta_1} &0&0\\
			\end{array}
			\right).\qquad\qquad\qquad\end{split}
	\end{equation}
	As expected $T_1T_2T_3=\mathbb I$. 
	
	It is easy to see that $T_1,T_2,T_3$ depend on $8$ independent parameters, of which $2$ are overall scale, therefore actually only $6$ parameters to determine $T_1,T_2,T_3\in\mathbb P SL_n(\mathbb R)$ uniquely. This is consistent with the fact that to find matrices in $\mathbb P SL_n(\mathbb R)$  one needs to determine \textit{projective bases,} namely bases up to overall rescaling.
\end{example}

As seen in Example \ref{ex:3x3flags2}, 
in order to have a unique choice of $T_1,T_2,T_3\in \mathbb PSL_n(\mathbb R)$, the information about the $n$ lines on each side is not enough. We need more information: we need to complete the splitting of $\mathbb P\mathbb R^n$ to a \textbf{projective basis}. As we shall see in the next subsection, this corresponds to adding the information of an extra line for every edge. This line is called \textbf{pinning}.

\subsection{Projective bases}

Let $V$ be a vector space over a field $\mathbb K$ of dimension $n$ and $\mathbb P(V)$ its projective space, i.e. $\mathbb P(V) =V \setminus \{0\} / \mathbb K^{*}$.
Let $p : V \setminus \{0\} \longrightarrow \mathbb P(V)$ be the canonical projection, i.e.
\begin{align*}
	p : V \setminus \{0\} &\longrightarrow \mathbb P(V)\\
	v &\longmapsto p(v) := \langle v \rangle.
\end{align*}

\begin{definition}\label{def:pr-b}
	A \textbf{projective basis} for $V$ is a choice of $n+1$ lines $\lambda_1, \dots \lambda_n, \lambda_{n+1}\in \mathbb P V$ such that
	any $n$ of them split $V$, namely 
	\begin{equation}
		\label{projectiveBasisCondition}
		V= \lambda_{i_1} \oplus \dots \oplus \lambda_{i_{n}}.
	\end{equation}
	for any pairwise distinct integers $0 < i_1 < \dots < i_{n} \leq n+1$.
\end{definition}

A projective basis corresponds to a basis of $V$ up to common rescaling factor such that a specific linear combination is fixed, as the following lemma shows.

\begin{lemma}
	\label{lemSumProjectiveFramings}
	Let $\lambda_1, \dots, \lambda_{n+1}$ define a projective basis for $\mathbb PV$.
	Then, for any fixed $( {\alpha}_1, \dots, {\alpha}_n ) \in (\mathbb K^*)^n$, there exist $\{v_1, \dots v_n\}$ basis of $V$ such that
	\begin{align}
		\label{projectiveBasis1}
		\lambda_i &= p(v_i) \quad \text{for all }1 \leq i \leq n,\\
		\label{projectiveBasis2}
		\lambda_{n+1} &= p({\alpha}_1 v_1 + \dots + {\alpha}_n v_n).
	\end{align}
	The basis $v_1,\dots,v_n$ is unique up to global rescaling.
\end{lemma}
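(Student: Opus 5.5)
The proof is a direct linear-algebra argument: first produce the basis by expanding a spanning vector of $\lambda_{n+1}$ in the splitting given by the first $n$ lines, then check uniqueness up to a global scalar.

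\emph{Existence.} By Definition \ref{def:pr-b}, with the choice $\{i_1,\dots,i_n\}=\{1,\dots,n\}$, we have $V=\lambda_1\oplus\dots\oplus\lambda_n$. So we can pick nonzero vectors $w_i\in\lambda_i$, and $w_1,\dots,w_n$ form a basis of $V$. Choose a nonzero vector $u\in\lambda_{n+1}$ and expand it as $u=c_1w_1+\dots+c_nw_n$.

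The key claim is that every $c_i$ is nonzero. Suppose instead that $c_j=0$ for some $j$. Then $u$ lies in $\bigoplus_{i\neq j}\lambda_i$. Hence the $n$ lines $\lambda_{n+1}$ and $\lambda_i$ for $i\neq j$ span only an $(n-1)$-dimensional subspace. This contradicts \eqref{projectiveBasisCondition} applied to the index set $\{1,\dots,n+1\}\setminus\{j\}$.

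Now define $v_i:=\frac{c_i}{\alpha_i}w_i$, which makes sense because every $\alpha_i\in\mathbb K^*$. Each $v_i$ is nonzero and lies in $\lambda_i$, so \eqref{projectiveBasis1} holds. Moreover $\sum_i\alpha_iv_i=\sum_i c_iw_i=u$, so \eqref{projectiveBasis2} holds.

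\emph{Uniqueness.} Let $v'_1,\dots,v'_n$ be a second basis satisfying \eqref{projectiveBasis1} and \eqref{projectiveBasis2}. Since both $v'_i$ and $v_i$ span the line $\lambda_i$, we can write $v'_i=t_iv_i$ with $t_i\in\mathbb K^*$. Since both $\sum\alpha_iv'_i$ and $\sum\alpha_iv_i$ span $\lambda_{n+1}$, there is some $t\in\mathbb K^*$ with $\sum_i\alpha_it_iv_i=t\sum_i\alpha_iv_i$. Comparing coefficients in the basis $\{v_i\}$ gives $\alpha_it_i=t\,\alpha_i$ for every $i$. As every $\alpha_i\neq0$, this forces $t_i=t$ for all $i$, so the two bases differ by the global rescaling $t$.

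The only step that needs care is showing that the coefficients $c_i$ are nonzero, since it is the one place where the full projective-basis condition (every $n$-subset splits $V$) is used rather than just the first $n$ lines. It is also the reason the lemma requires $\alpha\in(\mathbb K^*)^n$: a zero $\alpha_i$ could not be matched by any rescaling of $w_i$.
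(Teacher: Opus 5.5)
Your proof is correct and follows the same route as the paper: expand a generator of $\lambda_{n+1}$ in a basis adapted to $\lambda_1,\dots,\lambda_n$, rescale each basis vector by its coefficient divided by $\alpha_i$, and obtain uniqueness by comparing coefficients. You also supply a step the paper only asserts, namely that every expansion coefficient is nonzero, which you correctly derive from the projective-basis condition applied to the index sets $\{1,\dots,n+1\}\setminus\{j\}$.
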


\begin{proof}
	Since $\lambda_i \in \mathbb P(V)$ there exists $\overline v_i \in V$ such that $\lambda_i = p(\overline v_i)$ for all $1 \leq i \leq n+1$ - in other words, there exists a vector $\overline v_i$ such that $\lambda_i=\langle\overline v_i\rangle$.
	Applying condition \eqref{projectiveBasisCondition} with $i_l = l$ for $l = 1, \dots, n$, we have that
	$$
	V= \lambda_{1} \oplus \dots \oplus \lambda_{n},
	$$  
	so that $\overline v_1, \dots \overline v_n$ must be linearly independent and 
	provide a basis of $V$.
	Therefore, there exist $\bar{\alpha}_1, \dots, \bar{\alpha}_{n+1} \in \mathbb K^*$ such that $\bar{\alpha}_1 \overline v_1 + \dots + \bar{\alpha}_{n+1}\overline v_{n+1} = 0$.
	Now, for any fixed $( {\alpha}_1, \dots, {\alpha}_n ) \in (\mathbb K^*)^n$, define $v_i := -\frac{\bar{\alpha}_i}{{\alpha}_i \bar{\alpha}_{n+1}} \overline v_i$, so that $p(v_i) = p(\overline v_i)=\lambda_i$ for 
	$i=1,\dots,n$ and 
	$$
	\lambda_{n+1}=p(\overline v_{n+1})= p \left(- \frac{\bar{\alpha}_1}{\bar{\alpha}_{n+1}} \overline v_1 - \dots - \frac{\bar{\alpha}_n}{\bar{\alpha}_{n+1}} \overline  v_{n} \right)=
	p({\alpha}_1 v_1 + \dots + {\alpha}_n v_n),
	$$
	as we wanted to prove. 
	
	Now suppose we have another basis $u_1,\dots, u_n$ such that 
	$\lambda_i=\langle u_i \rangle$ for $i=1,\dots,n$ and $\lambda_{n+1}= \langle {\alpha}_1 u_1 + \dots + {\alpha}_n u_n\rangle $ then $u_i=\beta_i v_i$ for some non zero constants $\beta_1,\dots, \beta_n$ and therefore 
	$\lambda_{n+1}= \langle {\alpha}_1\beta_1 v_1 + \dots + {\alpha}_n\beta_n v_n\rangle =\langle {\alpha}_1 v_1 + \dots + {\alpha}_n v_n\rangle $, therefore  $\beta_1=\beta_2=\dots =\beta_n$ as we wanted to prove.
\end{proof}

\subsection{Standard bases}\label{suse:st-b}

To avoid repeating the computations of $T_1,T_2,T_3$ for every possible choice of pinnings, Fock and Goncharov introduce a standard projective basis associated with each side of the configuration. The idea is to perform the main calculation once in these standard projective bases and then convert to and from the bases determined by the chosen pinnings.

More precisely, to compute the matrices $T_1,T_2,T_3$, the process is decomposed into three steps. First, one maps the projective basis determined by a given pinning $\Lambda_{IJ}$ to the standard projective basis on that same side $IJ$. Second, one computes the \textbf{standard matrix}  that maps the standard projective basis on the side $IJ$
to the standard projective basis on the side $KL$; this is the combinatorial part of the computation that does not depend on the particular pinnings. Finally, one maps the standard projective basis on the side $KL$
to the projective basis associated with the chosen pinning 
$\Lambda_{KL}$.
In this way, the dependence on the specific pinnings is isolated in the first and last transformations, while the standard matrix between the two standard bases only needs to be computed once.

We now explain how to define a standard projective basis for each side of a triangle and how to construct the matrices that map the standard projective basis on one side to the standard projective basis on another. 

First of all, cover $\triangle_{123}$ by its unique tessellation of $n^2$ identical equilateral triangular tiles, alternated between upward and downward. This gives rise to a \textbf{triangle graph} $\triangle_n$ as defined here:\\

\begin{definition}
	We call the $n$-\textbf{triangle graph} the simple planar graph $\triangle_n$ whose set of vertices $\operatorname{Vtx}(\triangle_n)$ is	
	\begin{equation*}
		\operatorname{Vtx}( \triangle_n ) :=  \{ \left. ( {a}, {b}, {c}) \in \mathbb R^3 \, \right| \, {a} + {b} + {c} = n \text{ and } {a}, {b}, {c} \geq 0\} \cap \mathbb Z^3, 
	\end{equation*}
	and whose set of edges $\operatorname{Edg}(\triangle_n)$ is
	\begin{equation*}
		\operatorname{Edg}( \triangle_n ) := \{ \left. \{ {v}, {v}' \} \subset \operatorname{Vtx}( \triangle_n ) \right| \, v= (a,b,c) \text{ and } v'=(a',b',c') \text{ satisfy \eqref{eqn:conditionvertices}} \},
	\end{equation*}
	where
	\begin{align}
		a' &= a, & & b' = b - 1, & & c' = c + 1 & &\text{ or } \nonumber\\
		\label{eqn:conditionvertices}
		a' &= a + 1, & & b' = b, & & c' = c - 1 & &\text{ or } \\
		a' &= a - 1, & & b' = b + 1, & & c' = c .\nonumber
	\end{align}
\end{definition}

The collection of all edges of $\triangle_n$ provides a partition of $\triangle_{123}$ into $n^2$ faces, which we call \textbf{tiles}.
Specifically, any of these tiles is determined by three edges such that any two of them intersect in exactly one vertex.
In particular, each tile can either be visually \textit{upward}, or \textit{downward}. Note that we can endow $\triangle_n$ with the same orientation as $\triangle_{123}$, in the case of these lecture notes always clock-wise.

A clear way to visualize the graph $\triangle_n$ is to place it in an orthogonal frame in $\mathbb R^3$ in such a way that the vertex $1$ has coordinate $(n-1,0,0)$, the vertex $2$ has coordinate $(0,n-1,0)$ and the  the vertex $3$ has coordinate $(0,0,n-1)$, so that  the vector $(1,1,1)$ pointing at the reader, as shown in \Cref{fig.triangle-graph}.

In this way, each vertex in $\triangle_n$ is coordinatized by a triple of integers $(i,j,k)$ such that $i+j+k=n$. These coordinates are called \textbf{barycentric coordinates}.

Exploiting this presentation, there is a convenient way to label the upward and downward tiles of $\triangle_{n}$.
It is obtained by projecting the vertices of $\triangle_{n-1}$ and $\triangle_{n-2}$ onto the plane of  $\triangle_{n}$ along the $(1,1,1)$-axis, as shown in \Cref{fig.triangle-graph}.

\begin{figure}[!htb]
	\centering
	\includegraphics[scale=1]{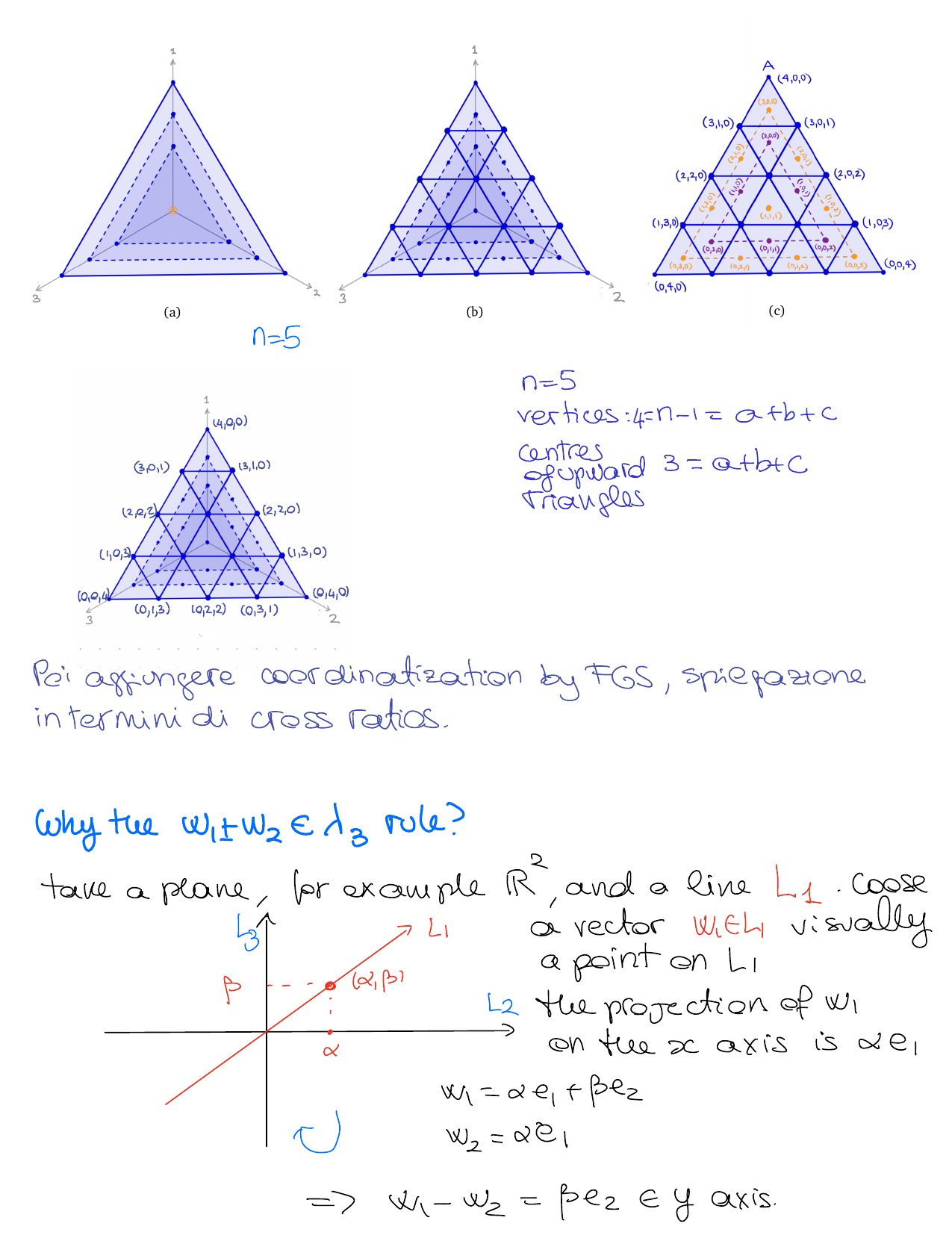}
	\caption{The super-imposed triangle graphs $\triangle_4,\triangle_3,\triangle_2$. }\label{fig.triangle-graph}
\end{figure}

This way, the projected vertices of $\triangle_{n-1}$ and $\triangle_{n-2}$ are in one-to-one correspondence with, respectively, the upward tiles ${\triangle}_{n}$ and the downward tiles inside ${\triangle}_{n}$, since they correspond to their centers. In this way, the centers of the \textbf{upward} tiles are given by triples of coordinates $(a,b,c)$ such that $a+b+c=n-1$, while the ones of the \textbf{downward} tiles are given by triples of coordinates $(a,b,c)$ such that $a+b+c=n-2$.

Now we use the fact that each vertex of $\triangle_{123}$ has a flag. Then any vertex of coordinates $(a,b,c)$ in the triangle graph $\triangle_n$ is associated to the subspace $F^{(1)}_{n-a}\cap F^{(2)}_{n-b}\cap F^{(3)}_{n-c}$. This gives a line $\lambda_{abc}$ if $(a,b,c)$ is a center of an upward tile in $\triangle_n$ and a plane $\pi_{abc}$ if $(a,b,c)$ is the center of a downward tile. 

By construction, a plane $\pi_{abc}$ contains the lines $\lambda_{(a+1)bc}, \lambda_{a(b+1)c}, \lambda_{ab(c+1)}$ attached to its three vertices.

\begin{example}\label{ex:3x3flags3} 
	The triangle graph $\triangle_3$ and its lines and planes are shown in \Cref{fig.triangle-graph3}.
	
	\begin{figure}[!htb]
		\centering
		\includegraphics[scale=1]{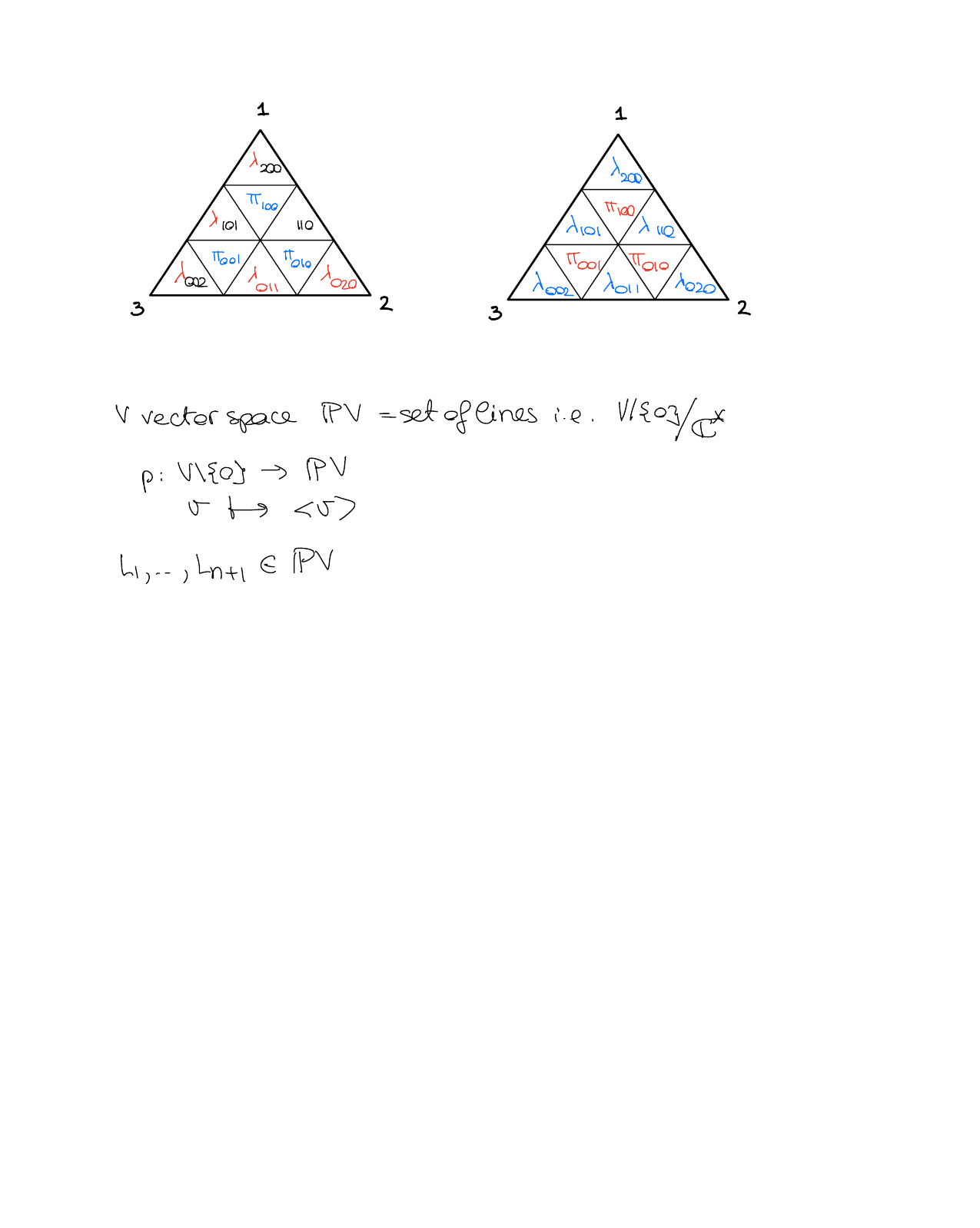}
		\caption{The triangle graph $\triangle_3$ and its lines and planes.}\label{fig.triangle-graph3}
	\end{figure}
	
	To find lines and planes, we use the formula
	$$
	F^{(1)}_{3-a}\cap F^{(2)}_{3-b}\cap F^{(3)}_{3-c}
	$$ 
	where the lines $\lambda_{abc}$ correspond to $a+b+c=2$ and the planes 
    $\pi_{abc}$ to $a+b+c=1$.
	
	For the flags of Example \ref{ex:3x3flags}, we obtain:   
	\begin{equation*}
		\begin{split}
			\lambda_{002}= \langle e_1+\alpha e_2+\beta e_3 \rangle,\quad 
			\lambda_{101}=\langle \gamma e_1+(\alpha\gamma-\beta) e_2 \rangle ,\quad 
			\lambda_{200}= \langle e_1\rangle,\quad \lambda_{110}=\langle e_2\rangle,\quad \lambda_{020}=\langle e_3\rangle,\\  
			\lambda_{011}= \langle e_2+\gamma e_3\rangle,\quad
			\pi_{100}=\langle  e_1, e_2  \rangle, \quad
			\pi_{010}=\langle e_3, e_2 \rangle, \quad
			\pi_{001}=\langle  e_1 +\alpha e_2+\beta e_3 , e_2+\gamma e_3 \rangle.
		\end{split}
	\end{equation*}

	Notice that the lines $\lambda_{abc}$ are exactly the ones that appear in the splittings of $\mathbb R^3$ given in Example \ref{ex:3x3flags1}, namely \eqref{eq:split3x3} can be rewritten as
	\begin{equation*}
		\begin{split}
			F^{(1)}_\bullet , F^{(2)}_\bullet:&\quad  
			\lambda_{200}\oplus \lambda_{110}\oplus  \lambda_{020},
			\\
			F^{(2)}_\bullet , F^{(3)}_\bullet:&\quad
			\lambda_{020}\oplus \lambda_{011}\oplus  \lambda_{002},
			\\
			F^{(3)}_\bullet , F^{(1)}_\bullet:&\quad  
			\lambda_{002}\oplus \lambda_{101}\oplus  \lambda_{200}.
		\end{split}
	\end{equation*}    
\end{example}

\subsection{Snakes and standard bases}\label{suse:proj-bases}

Motivated by the fact that the center of each upward tile is associated to a line, we add another layer to our combinatorial description: we consider triangles with vertices corresponding to the lines, see \Cref{fig:config}.  
\begin{figure}[!htb]
	\centering
	\includegraphics[width=\linewidth]{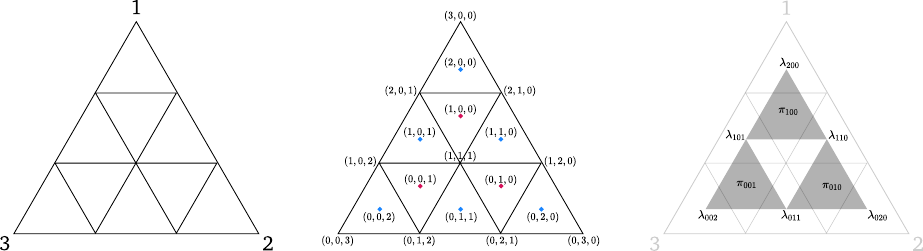}
	\caption{For $n=3$, from left to right: tessellation of $\triangle_{123}$, barycentric coordinates for vertices of the tessellation and centers of the tiles, configuration of subspaces with the gray triangles (and one white triangle surrounded by them).}\label{fig:config}
\end{figure}

For the rest of this section, we forget the tessellation focusing on these $\binom{n}{2}$ gray triangles---and the resulting $\binom{n-1}{2}$ white downward ones among them---looking at specific paths called snakes that run over their sides. Notice that the upward gray and downward white triangles give precisely the $n-1$ tessellation of a triangle connecting $\{\lambda_{(n-1)00},\lambda_{0(n-1)0},\lambda_{00(n-1)}\}$.
\begin{definition}
	A \emph{snake} $\mathbf{p}$ is an oriented piecewise path composed of exactly $n-1$ sides of gray triangles, which starts from a tile sharing a vertex with $\triangle_{123}$ and ends on a tile in contact with the opposite side.
\end{definition}

Notice that the length requirement implies no section of the snake can be parallel to the snake's target side of $\triangle_{123}$. 
We call $\mathbf{p}_{IJ}$, the unique snake running from $I$ to $J$ parallel to side $IJ$ of $\triangle_{123}$, a $\partial$-snake.

We now explain how to define a \textbf{standard projective basis} associated to each snake.

Denote by a Greek letter a generic triple of barycentric coordinates: e.g., $\lambda_{ijk}$ is equally denoted by $\lambda_\alpha$.
As shown in \Cref{fig:2projbasis}, each segment of a snake connects two vertices $\alpha, \beta$ of an upward triangle.

\begin{figure}[!htb]
	\centering
	\includegraphics[width=90mm]{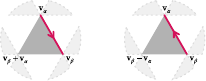}
	\caption{Segments of two oppositely oriented snakes. The
		vertices of the gray triangle correspond to 3 coplanar lines $\lambda_\alpha,\lambda_\beta,\lambda_\gamma$ and $\mathbf{v}_\gamma=\mathbf{v}_\beta\pm\mathbf{v}_\alpha$ depending on whether the segment
		is oriented clockwise or counterclockwise with respect to its gray triangle.}\label{fig:2projbasis}
\end{figure}

The corresponding lines  $\lambda_\alpha,\lambda_\beta$ are coplanar to $\lambda_\gamma$, where $\gamma$ is the remaining vertex of the upward triangle.
By coplanarity, a choice of vector $\mathbf{v}_\alpha\in \lambda_\alpha$ uniquely determines $\mathbf{v}_\beta\in\lambda_\beta$ by the following orientation rule
\begin{equation}\label{rule}
	\lambda_\gamma\ni\mathbf{v}_\gamma=
	\begin{cases}
		\mathbf{v}_\beta+\mathbf{v}_\alpha,\quad\circlearrowright\\
		\mathbf{v}_\beta-\mathbf{v}_\alpha,\quad\circlearrowleft.
	\end{cases}
\end{equation}
To understand why this rule is natural, let us pick three mutually transverse lines $\lambda_1,\lambda_2,\lambda_3$ in the plane as in \Cref{fig:st-plus} and \Cref{fig:st-minus}. Choose $w_1\in\lambda_1$ and denote its end point by $w_1$ with abuse of notation. Then there are two canonical choices for $w_2$: either the projection of $w_1$ onto $\lambda_2$ along the direction $\lambda_3$ as in \Cref{fig:st-minus} or along the direction orthogonal to $\lambda_3$ as in \Cref{fig:st-plus}.

\begin{figure}[!htb]
	\centering
	\includegraphics[width=90mm]{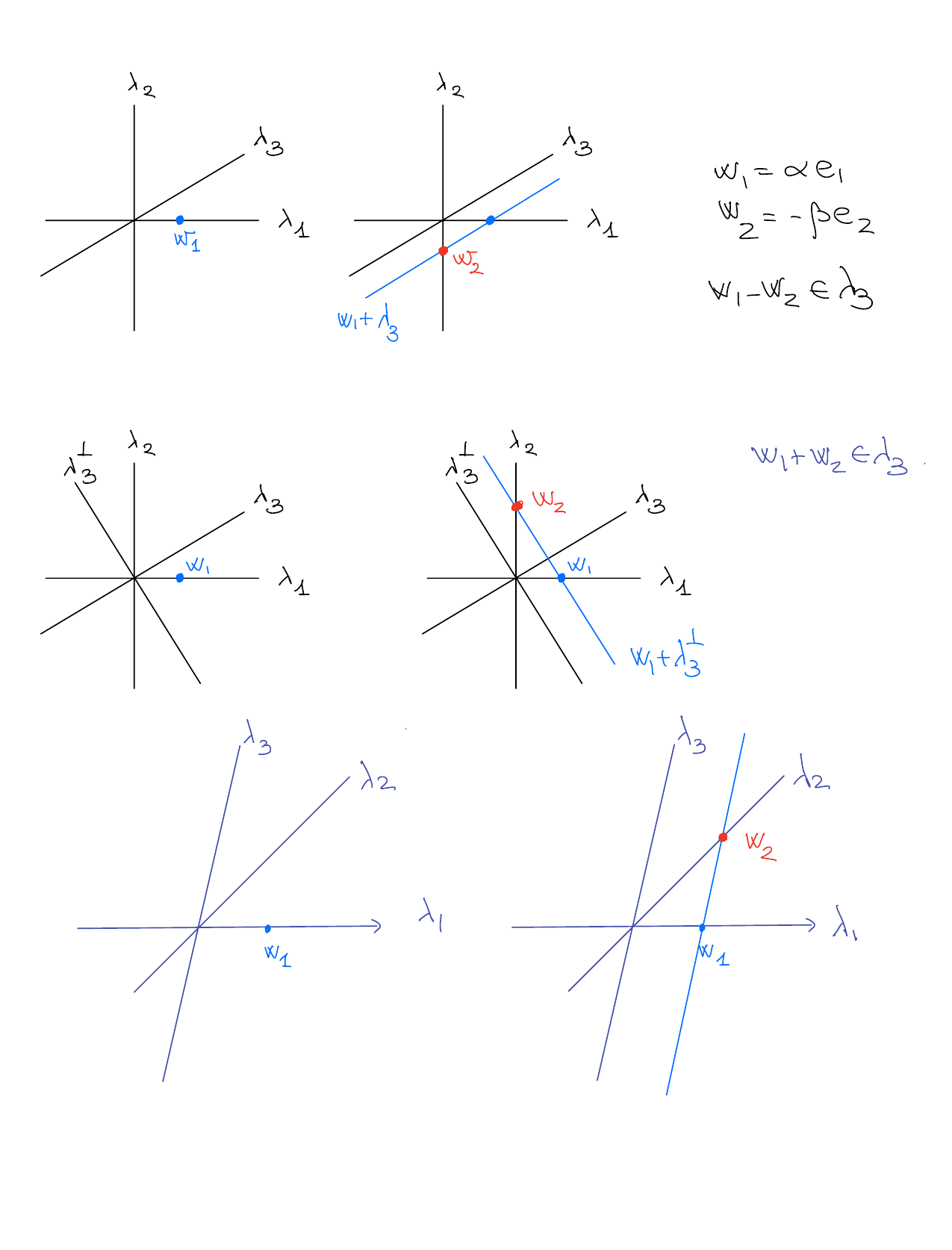}
	\caption{Take any $w_1\in\lambda_1$ and denote its end point by $\tcb{w_1}$ with abuse of notation, then consider the blue line that is parallel to $\lambda_3$ through  $\tcb{w_1}$. This blue line intersects $\lambda_2$ at a point $\tcr{w_2}$. Take $w_2\in\lambda_2$ to be the vector whose end point is $\tcr{w_2}$. In this case $w_1-w_2\in \lambda_3$.}\label{fig:st-minus}
\end{figure}

\begin{figure}[!htb]
	\centering
	\includegraphics[width=90mm]{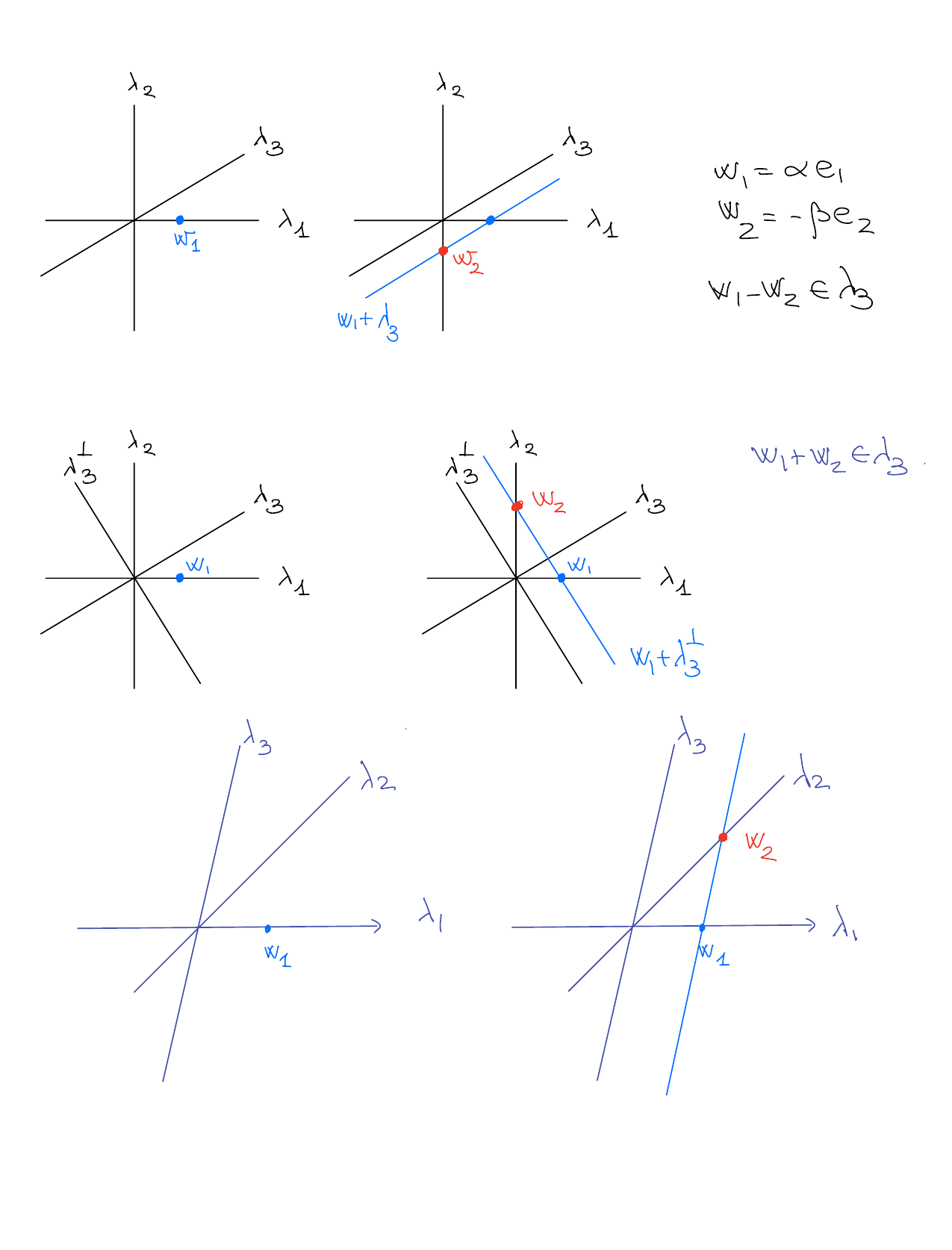}
	\caption{Take any $w_1\in\lambda_1$ and denote its end point by $\tcb{w_1}$ with abuse of notation, then consider the blue line that is orthogonal to $\lambda_3$ through  $\tcb{w_1}$. This blue line intersects $\lambda_2$ at a point $\tcr{w_2}$. Take $w_2\in\lambda_2$ to be the vector whose end point is $\tcr{w_2}$. In this case $w_1+w_2\in \lambda_3$.}\label{fig:st-plus}
\end{figure}

Therefore, a snake inductively determines a basis of $\real^n$ up to global rescaling: Once the first vector is chosen, iteratively applying the rule, the resulting $n$ vectors are defined uniquely. Rescaling the first vector gives a global scaling factor. Their linear independence is a consequence of the flags being assumed in generic position. 

\begin{example}\label{ex:3x3flags4}
	For the lines of Example \ref{ex:3x3flags3}, let us apply the rule \eqref{rule} to pick a standard basis for each snake $\mathbf{p}_{31}$, $\mathbf{p}_{23}$ and $\mathbf{p}_{12}$.
	
	Denote by $\tilde u_1,\tilde u_2,\tilde u_3$ the standard basis associated to the snake $\mathbf{p}_{31}$. We choose an arbitrary $\tilde u_1\in \lambda_{002}$, for example 
	$$
	\tilde u_1=\eta(e_1+\alpha e_2+\beta e_3),\text{ for some }\eta\in \mathbb R^*,
	$$
	then by applying \eqref{rule}, we need to pick $\tilde u_2\in\lambda_{101}$ such that $\tilde u_1+\tilde u_2\in\lambda_{011}$. This means that we need to pick $\nu$ such that
	$$
	\nu(\gamma e_1+(\alpha\gamma-\beta) e_2)+ \eta(e_1+\alpha e_2+\beta e_3)\in \langle  e_2+\gamma e_3 \rangle,
	$$
	which gives (note that thanks to the generic position of the flags, $\gamma\neq 0$ as seen in Example \ref{ex:3x3flags})
	$$
	\tilde u_2=-\eta\left(e_1+\frac{\alpha\gamma-\beta}{\gamma}e_2\right).
	$$
	Similarly we can show that $\tilde u_3=\eta e_1$. Hence, we see that the basis $\tilde u_1,\tilde u_2,\tilde u_3$ is uniquely defined up to global rescaling by $\eta$.
	
	In similar way, we construct the basis  $(\tilde v_1,\tilde v_2,\tilde v_3)$ corresponding to the snake $\mathbf{p}_{23}$ and  $(\tilde w_1,\tilde w_2,\tilde w_3)$ corresponding to $\mathbf{p}_{12}$:
	$$
	\tilde v_1= \sigma e_3,\quad \tilde v_2= -\sigma\left(\frac{1}{\gamma} e_2+e_3\right),
	\quad \tilde v_3=\sigma\left(\frac{1}{\beta} e_1+
	\frac{\alpha}{\beta}e_2+e_3\right),
	$$
	$$
	\tilde w_1=\mu e_1,\quad \tilde w_2=\mu \frac{\alpha\gamma-\beta}{\gamma}e_2,\quad \tilde w_3=\mu(\alpha\gamma-\beta) e_3.
	$$
	Let us compute the standard matrices $\tilde T_1,\tilde T_2,\tilde T_3$ such that $\tilde T_1\tilde w_i^T=\tilde u_i^T$, $\tilde T_2\tilde v_i^T=\tilde w_i^T$ and  $\tilde T_3\tilde u_i^T=\tilde v_i^T$. They all turn out to be of exactly the same form:
	\begin{equation}\label{eq:st-ma}
		\tilde T_1=\frac{\eta}{\mu}\mathbb T,\quad
		\tilde T_2= (\alpha\gamma-\beta)\frac{\mu}{\sigma}\mathbb T,\quad
		\tilde T_3= \frac{\sigma}{\eta\beta}\mathbb T,
	\end{equation}
	where
	\begin{equation}
		\mathbb T=
		\left(\begin{matrix}
			1&1+Z&Z\\-1&-1&0\\ 1&0&0
		\end{matrix}\right),\qquad\text{where} \quad
		Z:= \frac{\beta}{\alpha\gamma-\beta}.
	\end{equation}
	We conclude this example by observing that 
	\begin{equation}
		\label{eq:triple_ratio_greek_letters}
		\frac{\beta}{\alpha\gamma-\beta} =\operatorname{cr}_3(F^{(1)}_\bullet, F^{(2)}_\bullet, F^{(3)}_\bullet),
	\end{equation}
	where $\operatorname{cr}_3$ denotes the triple ratio associated with the three flags $F^{(1)}_\bullet, F^{(2)}_\bullet$ and $F^{(3)}_\bullet$. We explain  triple ratios in Subsection \ref{suse:triple-ratios}.
\end{example}

\subsection{Snake calculus}\label{suse:calc}

The fact that all the standard matrices $\tilde T_1,\tilde T_2, \tilde T_3$ in Example \ref{ex:3x3flags3} turn out to have the same form and depend only on the combination $ Z= \frac{\beta}{\alpha\gamma-\beta}$ is not due to the specific choice we made of the flags - indeed, as mentioned in Remark \ref{rmk:gen-flag}, any triple of flags in general position can be brought to this form. This is a feature of the combinatorial nature of the standard bases and is at the foundation of \textbf{snake calculus}, namely the combinatorial technique to compute the standard matrices that we are now going to explain. Here we follow closely \cite{DMM}.

In subsection \ref{suse:st-b}, we showed that each snake inductively determines a projective basis of $\mathbb R^n$: chosen the first
vector and iteratively applying the rule, the resulting n vectors are defined up to a
global scaling factor. Their linear independence is a consequence of the flags being
assumed generic. 

Given any two snakes, one can calculate change-of-basis matrices  between their
corresponding projective bases. The idea at the basis of snake calculus is to factorize matrices in terms of the elementary moves \MakeUppercase{\romannumeral 1}, \MakeUppercase{\romannumeral 2}, \MakeUppercase{\romannumeral 3} in
\Cref{fig:snakemoves}.

\begin{figure}[!htb]
	\centering
	{\includegraphics[width=\textwidth]{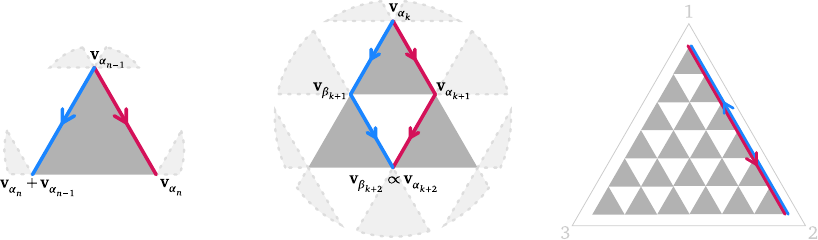}}
	\caption{From left to right, elementary snake moves \MakeUppercase{\romannumeral 1},\MakeUppercase{\romannumeral 2} and \MakeUppercase{\romannumeral 3} mapping {\color{2red}red} to {\color{2blue}blue} segments of a sample snake with $\mathbf{v}_1\in\lambda_{n00}$. Notice that move \MakeUppercase{\romannumeral 1} can only be performed on the last segment of a snake, i.e. when no subsequent segments can be affected. In this sense, move \MakeUppercase{\romannumeral 2} can be thought of as the extension of move \MakeUppercase{\romannumeral 1} to any other segment.}\label{fig:snakemoves}
\end{figure}

Move \MakeUppercase{\romannumeral 1} flips the last segment pivoting its source center across a gray triangle, by rule \eqref{rule} yielding:
\begin{equation}
	\begin{bmatrix}
		\mathbf{v}_{\alpha_1}\\\vdots\\\mathbf{v}_{\alpha_{n-1}}\\\mathbf{v}_{\alpha_n}
	\end{bmatrix}
	\mapsto
	\begin{bmatrix}
		\mathbf{v}_{\alpha_1}\\\vdots\\\mathbf{v}_{\alpha_{n-1}}\\\mathbf{v}_{\alpha_n}+\mathbf{v}_{\alpha_{n-1}}
	\end{bmatrix}
	=\underbrace{\left(
		\begin{array}{cc}
			\mathbb I_{n-2} &  \\
			& \begin{matrix}
				1 & 0 \\
				1 & 1
			\end{matrix}
		\end{array}\right)}_{L_{n-1}}
	\begin{bmatrix}
		\mathbf{v}_{\alpha_1}\\\vdots\\\mathbf{v}_{\alpha_{n-1}}\\\mathbf{v}_{\alpha_n}
	\end{bmatrix}.
\end{equation}
Move \MakeUppercase{\romannumeral 2} flips any two non parallel consecutive segments.
Analogously to move \MakeUppercase{\romannumeral 1}, sweeping the gray triangle yields  $\mathbf{v}_{\alpha_{k+1}}\mapsto\mathbf{v}_{\beta_{k+1}}=\mathbf{v}_{\alpha_{k+1}}+\mathbf{v}_{\alpha_{k}}$. However, this drags the second segment in a flip that pivots its target center: we expect the transformed $2$-segment  portion of the snake to end on a different vector within the same line, i.e. $\mathbf{v}_{\beta_{k+2}}\propto\mathbf{v}_{\alpha_{k+2}}$. Denoting by $Z$ the proportionality constant, the full move reads as
\begin{equation}\label{moveII}
	\begin{bmatrix}
		\mathbf{v}_{\alpha_1}\\\vdots\\
		\mathbf{v}_{\alpha_{k}} \\
		\mathbf{v}_{\alpha_{k+1}} \\ \mathbf{v}_{\alpha_{k+2}}\\
		\vdots\\
		\mathbf{v}_{\alpha_n}
	\end{bmatrix}
	\mapsto
	\begin{bmatrix}\mathbf{v}_{\alpha_1}\\\vdots\\ \mathbf{v}_{\alpha_{k}} \\ \mathbf{v}_{\beta_{k+1}} \\ \mathbf{v}_{\beta_{k+2}}\\
		\vdots\\
		\mathbf{v}_{\alpha_n}
	\end{bmatrix}
	=\left(\begin{array}{ccc}
		\mathbb I_{k-1} & & \\
		& \begin{matrix}
			1 & 0 & 0\\
			1 & 1 & 0\\
			0 & 0 & Z
		\end{matrix} &  \\ 
		&  & Z\ \mathbb I_{n-k-2}\end{array}\right)
	\begin{bmatrix}
		\mathbf{v}_{\alpha_1}\\\vdots\\
		\mathbf{v}_{\alpha_{k}} \\
		\mathbf{v}_{\alpha_{k+1}} \\ \mathbf{v}_{\alpha_{k+2}}\\
		\vdots\\
		\mathbf{v}_{\alpha_n}
	\end{bmatrix}.
\end{equation}

We give the following:
\begin{definition}\label{definition:blocks}
	Let $E_{rs}$ be the matrix unit, i.e., $(E_{rs})_{ij}=\delta_{ri}\delta_{sj}$.
	For $\mathbb  I$ denoting the identity matrix, $k\in\{1,\ldots,n\}$ and a parameter $t\in\real_{>0}$\,, 
	define the $SL_n(\real)$ matrices 
	\begin{align}
		L_k&=\mathbb  I+E_{k+1,k},\\   
		H_k(t)&=t^{-\frac{n-k}{n}}\mathrm{diag}\big(\underbrace{1,\ldots,1}_\text{$k$ times},t,\ldots,t\big),
	\end{align}
	and the $SL_n(\real)$ antidiagonal matrix
	\begin{equation}
		(S)_{ij}=(-1)^{n-i}\delta_{i,n+1-j}.
	\end{equation}
\end{definition}
The matrix appearing in \eqref{moveII} is given a multiple of $L_k H_{k+1}$.

There are $\binom{n-1}{2}$ type \MakeUppercase{\romannumeral 2} moves, one for each downward white triangle, and the corresponding proportionality constants are the so-called \textbf{Fock-Goncharov variables}.
Topologically, notice that Fock-Goncharov variables are in bijection with inner vertices of the tessellation of $\triangle_{123}$: there is exactly one such vertex inside any white triangle. We thus denote them $Z_{ijk}$ by the barycentric coordinates of the unique corresponding vertex, $i,j,k\in\integer_{>0}$.

\begin{example}\label{ex:3x3flags5}
	For $n=3$, there is just a single Fock-Goncharov variable $Z_{111}$. Then the matrix $\tilde T_1$ mapping the snake $\partial_{12}$ to the snake $\partial_{31}$ can be factorized as in \Cref{fig:T1n2}: 
	\begin{equation}
		\begin{aligned}
			\tilde T_{1}&=S\ L_{2} L_{1} H_2(Z_{111})L_{2}\\
			&=\begin{pmatrix}
				0 & 0 & 1\\0 & -1 & 0\\1 & 0 & 0
			\end{pmatrix}\begin{pmatrix}
				1 & 0 & 0\\0 & 1 & 0\\0 & 1 & 1
			\end{pmatrix}\begin{pmatrix}
				1 & 0 & 0\\1 & 1 & 0\\0 & 0 & 1
			\end{pmatrix}\begin{pmatrix}
				Z_{111}^{\nicefrac{-1}{3}} & 0 & 0\\0 & Z_{111}^{\nicefrac{-1}{3}} & 0\\0 & 0 & Z_{111}^{\nicefrac{2}{3}}
			\end{pmatrix}\begin{pmatrix}
				1 & 0 & 0\\0 & 1 & 0\\0 & 1 & 1
			\end{pmatrix}=\\
			& = Z_{111}^{\nicefrac{-1}{3}}\begin{pmatrix}
				1&1+Z_{111}&Z_{111}\\
				-1&-1&0\\ 1&0&0\\
			\end{pmatrix}.
		\end{aligned}
	\end{equation}
	Note that this has the same form as we have seen in Example \ref{ex:3x3flags4}, therefore for the three flags of Example \ref{ex:3x3flags}, $Z_{111}=\frac{\beta}{\alpha\gamma-\beta}$.
	
	\begin{figure}[!htb]
		\centering
		\includegraphics[width=\linewidth]{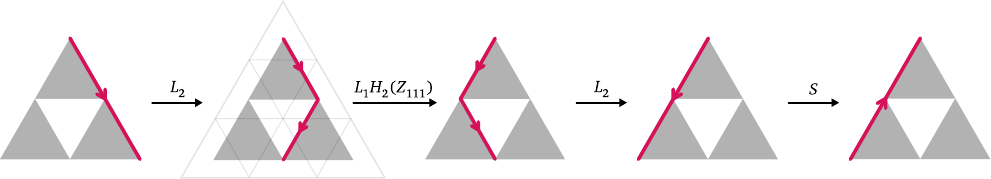}
		\caption{Sequence of snake moves factorizing $\tilde T_{1}$ for $n=3$. At step 2, the tessellation's only inner vertex of barycentric coordinates $(1,1,1)$ labels the Fock-Goncharov variable $Z_{111}$. At step 4, the $\partial$-snake runs counterclockwise and an $S$ matrix is needed.}\label{fig:T1n2}
	\end{figure}
\end{example}

\subsection{Fock Goncharov coordinates as triple ratios}\label{suse:triple-ratios}
As mentioned after formula \eqref{moveII}, there is a Fock-Goncharov variable for every white triangle. Each such white triangle is adjacent to three gray triangles whose vertices are three coplanar lines. Note that the white triangle together with its three adjacent triangles forms a triangle that is isomorphic to the inner triangle of $\triangle_3$ as in \Cref{fig:FG-variable}. 

\begin{figure}[!htb]
	\centering
	\includegraphics[width=\linewidth]{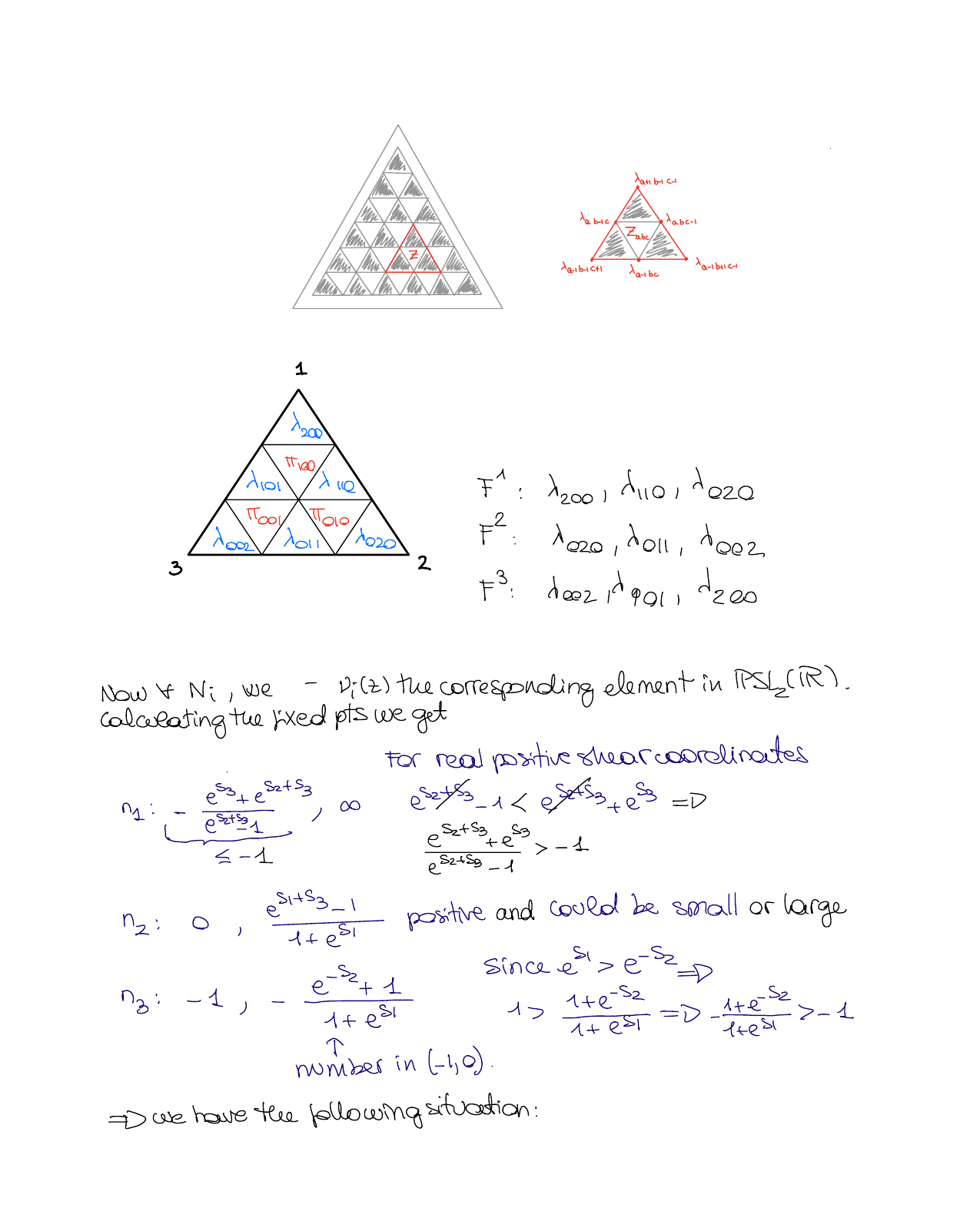}
	\caption{On the left, we highlight a white triangle and its three adjacent gray ones. On the right we display the isomorphic $\triangle_3$.}\label{fig:FG-variable}
\end{figure}

The six lines in \Cref{fig:FG-variable} all lie in the same three dimensional space. Indeed they lie in the intersection $F^{(1)}_{n-a+1}\cap F^{(2)}_{n-b+1}\cap F^{(3)}_{n-c+1}$, that due to the transversality condition \eqref{mindimkflags}, has dimension $3$. Consider any generators for these six lines, namely any vectors $v_{ijk}\in\mathbb R^3$ such that $\lambda_{ijk}=\langle v_{ijk} \rangle$. Since each triple of lines 
on the same side of the red triangle defines a splitting of $\mathbb R^3$, the corresponding three generators are linearly independent and span a parallelepiped. 

Given any three linearly independent vectors $v_\alpha,v_\beta,v_\gamma\in \mathbb R^3$, the oriented volume of the parallelepiped they span is the determinant of the matrix whose columns are $v_\alpha,v_\beta,v_\gamma$. We denote this volume as
$$
v_\alpha\wedge v_\beta\wedge v_\gamma:= \det(v_\alpha,v_\beta,v_\gamma).
$$
Hence, we associate to the white triangle the following \textbf{triple ratio:}
\begin{equation}
	\begin{split}
		\operatorname{cr}_3(a,b,c):=&\frac{v_{a+1,b-1,c-1} \wedge v_{a,b,c-1}  \wedge v_{a-1,b-1,c+1} }{v_{a+1,b-1,c-1} \wedge v_{a,b,c-1}  \wedge v_{a-1,b+1,c-1}}\\
		& \frac{ (v_{a-1,b-1,c+1}  \wedge v_{a,b-1,c}  \wedge v_{a-1,b+1,c-1} ) (v_{a-1,b+1,c-1}  \wedge v_{a-1,b,c}  \wedge v_{a+1,b-1,c-1})
		}{
			(v_{a-1,b+1,c-1}  \wedge v_{a-1,b,c}  \wedge v_{a-1,b-1,c+1} ) 
			(v_{a-1,b-1,c+1}  \wedge v_{a,b-1,c}  \wedge v_{a+1,b-1,c-1})
		}.
	\end{split}
\end{equation}

Note that the above expression only depends on the lines and not on the choice of the generating vectors. This quantity is an invariant of the flags $F^{(1)}_\bullet,F^{(2)}_\bullet,F^{(3)}_\bullet $. Indeed, it only depends on the three sub-flags 

\begin{equation*}
	\begin{split}
		F^{(1)}_{n-a-1}\subset F^{(1)}_{n-a}\subset F^{(1)}_{n-a+1},\\
		F^{(2)}_{n-b-1}\subset F^{(2)}_{n-b}\subset F^{(2)}_{n-b+1},\\
		F^{(3)}_{n-c-1}\subset F^{(3)}_{n-c}\subset F^{(3)}_{n-c+1}.\\
	\end{split}
\end{equation*}

\subsection{Pinnings}

As explained just after Example \ref{ex:3x3flags2}, each side $IJ$ of $\triangle_{123}$ comes with a splitting of $\mathbb R^n$ into $n$ lines and a \emph{pinning} corresponds to the choice of a line $\Lambda_{IJ}$ such that $\Lambda_{IJ}$ together with the lines associated to the splitting form a projective basis.

\begin{example}\label{ex:3x3flags5}
	Now that we calculated the standard bases on each side of the triangle, 
	let us see what are the pinnings. Let us focus on the snake $\mathbf{p}_{23}$. Corresponding to this snake, we have the splitting $\lambda_{020}\oplus \lambda_{011}\oplus  \lambda_{002}$. In the standard basis $\tilde v_1,\tilde v_2,\tilde v_3$, these lines are given by
	$$
	\lambda_{020}=\langle\tilde v_1\rangle,\quad
	\lambda_{011}=\langle\tilde v_2\rangle,\quad
	\lambda_{002}=\langle\tilde v_3\rangle.
	$$
	Now let us find the line $\Lambda_{23}$  such that the condition $v_1+v_2+v_3\in \Lambda_{23}$ specifies the  basis $v_1,v_2,v_3$ found in Example \ref{ex:3x3flags2}:
	$$
	v_1   =\sigma_1 e_3,\quad v_2=\sigma_2(e_2+\gamma e_3), \quad v_3= \sigma_3( e_1+\alpha e_2+\beta e_3).
	$$
	up to an overall factor (see Definition \ref{def:pr-b}). 
	
	Since
	$$\left(\begin{array}{cc}
		v_1^T     \\
		v_2^T     \\ v_3^T     \\ 
	\end{array}\right)=
	\left(\begin{array}{ccc}
		\frac{\sigma_1}{\sigma} & &  \\
		&- \frac{\sigma_2}{\sigma} \gamma &\\
		& &  \frac{\sigma_3}{\sigma} \beta
	\end{array}\right)
	\left(\begin{array}{cc}
		\tilde v_1^T     \\
		\tilde v_2^T     \\ \tilde v_3^T     \\ 
	\end{array}\right),
	$$
	the line $\Lambda_{23}$ is given by
	$$
	\Lambda_{23} =\langle \sigma_1 \tilde v_1-\sigma_2\gamma\tilde v_2+\sigma_3\beta \tilde v_3 \rangle.
	$$
	Similarly due to 
	$$
	\left(\begin{array}{cc}
		u_1^T     \\
		u_2^T     \\ u_3^T     \\ 
	\end{array}\right)=
	\left(\begin{array}{ccc}
		\frac{\eta_1}{\eta} & &  \\
		&- \frac{\eta_2}{\eta} \gamma &\\
		& &  \frac{\eta_3}{\eta} 
	\end{array}\right)
	\left(\begin{array}{cc}
		\tilde u_1^T     \\
		\tilde u_2^T     \\ \tilde u_3^T     \\ 
	\end{array}\right),\quad
	\left(\begin{array}{cc}
		w_1^T     \\
		w_2^T     \\ w_3^T     \\ 
	\end{array}\right)=
	\left(\begin{array}{ccc}
		\frac{\mu_1}{\mu} & &  \\
		& \frac{\mu_2}{\mu} \frac{\gamma}{\alpha\gamma-\beta} &\\
		& &  \frac{\mu_3}{\mu} \frac{1}{\alpha\gamma-\beta}
	\end{array}\right)
	\left(\begin{array}{cc}
		\tilde w_1^T     \\
		\tilde w_2^T     \\ \tilde w_3^T     \\ 
	\end{array}\right),
	$$
	we obtain 
	$$
	\Lambda_{31}= \langle \eta_1 \tilde u_1 -\frac{\eta_2}{\gamma} \tilde v_2 + \eta_3\tilde v_3\rangle,\quad\text{and}\quad
	\Lambda_{12}=\langle\mu_1 w_1+\mu_2\frac{\gamma}{\alpha\gamma-\beta} w_2+\mu_3 \frac{1}{\alpha\gamma-\beta} w_3\rangle.
	$$
	This is compatible with the fact that the matrices $T_1,T_2,T_3$ in \eqref{eq:T1T2T3} are related to the matrices $\tilde T_1,\tilde T_2,\tilde T_3$ in \eqref{eq:st-ma} by mixed diagonal multiplication.
\end{example}

As illustrated in the above Example \ref{ex:3x3flags5}, each oriented side $IJ$ comes with two projective bases, one from the pinning and the other from the corresponding $\partial$-snake $\mathbf{p}_{IJ}$, and the  change-of-basis matrix between them is given by a diagonal matrix that depends on $n-1$ proportionality constants.
These $n-1$ proportionality constants are thought of as additional Fock-Goncharov variables $Z_{ijk}$, labeled by the vertices on the interior of $IJ$.\\
Adding these extra variables from all three sides to the ones produced by type \MakeUppercase{\romannumeral 2} moves, we get a total of
$3(n-1)+\binom{n-1}{2}=\frac{(n+4)(n-1)}{2}$ Fock-Goncharov variables (\Cref{fig:FG}).

\begin{figure}[!htb]
	\centering
	\includegraphics[width=.9\textwidth]{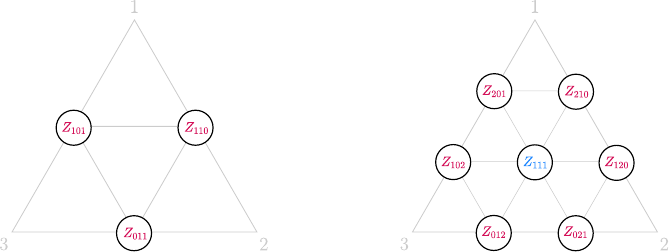}
	\caption{Fock-Goncharov variables $Z_\alpha$ for $\mathcal{P}_{\PGL_2(\real)}(\triangle_{123})$ on the left and $\mathcal{P}_{\PGL_3(\real)}(\triangle_{123})$ on the right. {\color{2blue}Blue} variables are associated with moves \MakeUppercase{\romannumeral 2} and {\color{2red}red} ones with side pinnings.}\label{fig:FG}
\end{figure}

As a whole, they are in bijection with the tessellation's vertices except $1,2,3$.

\begin{definition}\label{def-tt}
	The transport matrices $T_1,T_2,T_3$ are the following $\PGL_n(\real)$ matrices
	\begin{equation}\label{T-formulae}
		\begin{aligned}
			& T_1
			=S\prod_{k=1}^{n-1}\Big[H_{n-k}(Z_{k,0,n-k})\Big]\ \ L_{n-1}\prod_{j=1}^{n-2}\left[\prod_{i=1}^j\Big[L_{n-i-1}H_{n-i}(Z_{n-j-i,i,j})\Big]L_{n-1}\right]\ \ \prod_{k=1}^{n-1}H_k(Z_{n-k,k,0}),\\
			& T_2
			=S\prod_{k=1}^{n-1}\Big[H_{n-k}(Z_{n-k,k,0})\Big]\ \ L_{n-1}\prod_{j=1}^{n-2}\left[\prod_{i=1}^j\Big[L_{n-i-1}H_{n-i}(Z_{j,n-j-i,i})\Big]L_{n-1}\right]\ \ \prod_{k=1}^{n-1}H_k(Z_{0,n-k,k}),\\
			& T_3=S\prod_{k=1}^{n-1}\Big[H_{n-k}(Z_{0,n-k,k})\Big]\ \ L_{n-1}\prod_{j=1}^{n-2}\left[\prod_{i=1}^j\Big[L_{n-i-1}H_{n-i}(Z_{i,j,n-j-i})\Big]L_{n-1}\right]\ \ \prod_{k=1}^{n-1}H_k(Z_{k,0,n-k}).
		\end{aligned}
	\end{equation}
\end{definition}
As expected, a direct computation confirms that $T_1T_2T_3=\mathbb  I$.
Together with their inverses, $T_1, T_2$ and $T_3$ suffice to map between any two sides.
Notice that, for the permutation map $\sigma$ acting on matrices $T(Z_{ijk})$ depending on Fock-Goncharov variables $Z_{ijk}$ as
\begin{equation}
	\sigma T(Z_{ijk}):=T(Z_{jki}),
\end{equation}
we have $T_2=\sigma T_1$ and $T_3=\sigma^2 T_1$.

Observe the following useful relation:
\begin{equation}\label{eq:HS-SH}
	H_k(Z) S = S H_{n-k}(Z^{-1})
\end{equation}

\begin{example}
	Applying formulae \eqref{T-formulae} to the case $n=3$, we find
	$$
	T_1= S H_2(Z_{102}) H_1(Z_{201}) L_2 L_1 H_2(Z_{111}) L_2 H_1(Z_{210}) H_2(Z_{120}),
	$$
	and using \eqref{eq:HS-SH}, we have
	\begin{equation}
		\label{eq:T1forn3}
		\begin{split}
			T_1=&  H_1(Z_{102}^{-1}) H_2(Z_{201}^{-1})S L_2 L_1 H_2(Z_{111}) L_2 H_1(Z_{210}) H_2(Z_{120})=\\
			&H_1(Z_{102}^{-1}) H_2(Z_{201}^{-1}) \tilde T_1 H_1(Z_{210}) H_2(Z_{120}),
		\end{split}
	\end{equation}
	where $\tilde T_1$ was calculated in Example \ref{ex:3x3flags5}. Notice that the two factors on the left and on the right  of $\tilde T_1$ in \eqref{eq:T1forn3} are both diagonal as expected from our computations comparing $T_1,T_2,T_3$ in \eqref{eq:T1T2T3} to the matrices $\tilde T_1,\tilde T_2,\tilde T_3$ in \eqref{eq:st-ma}.
\end{example}

\subsection{Gluing triangles}\label{se:glue}

As explained at the beginning of this Section, we triangulate the finite part of the surface and associate to any path a product of transport matrices, one for each triangle the path crosses. However, the coordinate description of these transport matrices was given assuming that all triangles are oriented clockwise. This means that every time we glue two adjacent triangles, the shared edge inherits two opposite orientations, one from each triangle. Since the coordinate descriptions on either side of the edge are related by a reversal of vertex ordering, we must insert a matrix $S$ to pass between the two coordinate systems. As a result, the matrix associated to any path takes the form of an alternating product
\begin{equation}\label{eq:fact-FG}
	S^{t} T^{(k)}_{i_k} S T^{(k-1)}_{i_{k-1}} S\dots T^{(2)}_{i_2} S T_{i_1}^{(1)} S^{s}, \qquad t,s\in\{0,1\},
\end{equation}
where we have enumerated by $1,\dots,k$ the triangles in the order they are crossed by the given path, so that $T^{(j)}_{i_j}$ denotes the $i_j$-th transport matrix associated to the $j$-th triangle, and the initial and final factors of $S$ are determined by the choice of orientation we pick for the initial and final snakes. The factorization may or may not start and end with an $S$ matrix according to the orientation of the initial and final edges.
This structure is illustrated concretely in Example \ref{ex:2tri} below.

\begin{example}\label{ex:2tri}
	Consider the situation of two adjacent triangles as on the left hand side of \Cref{fig:glue2t}. To calculate the matrix $M$ corresponding to the highlighted path, we need to separate the triangles and label clockwise the vertices of each triangle. 
	\begin{figure}[!htb]
		\centering
		\includegraphics[width=.9\textwidth]{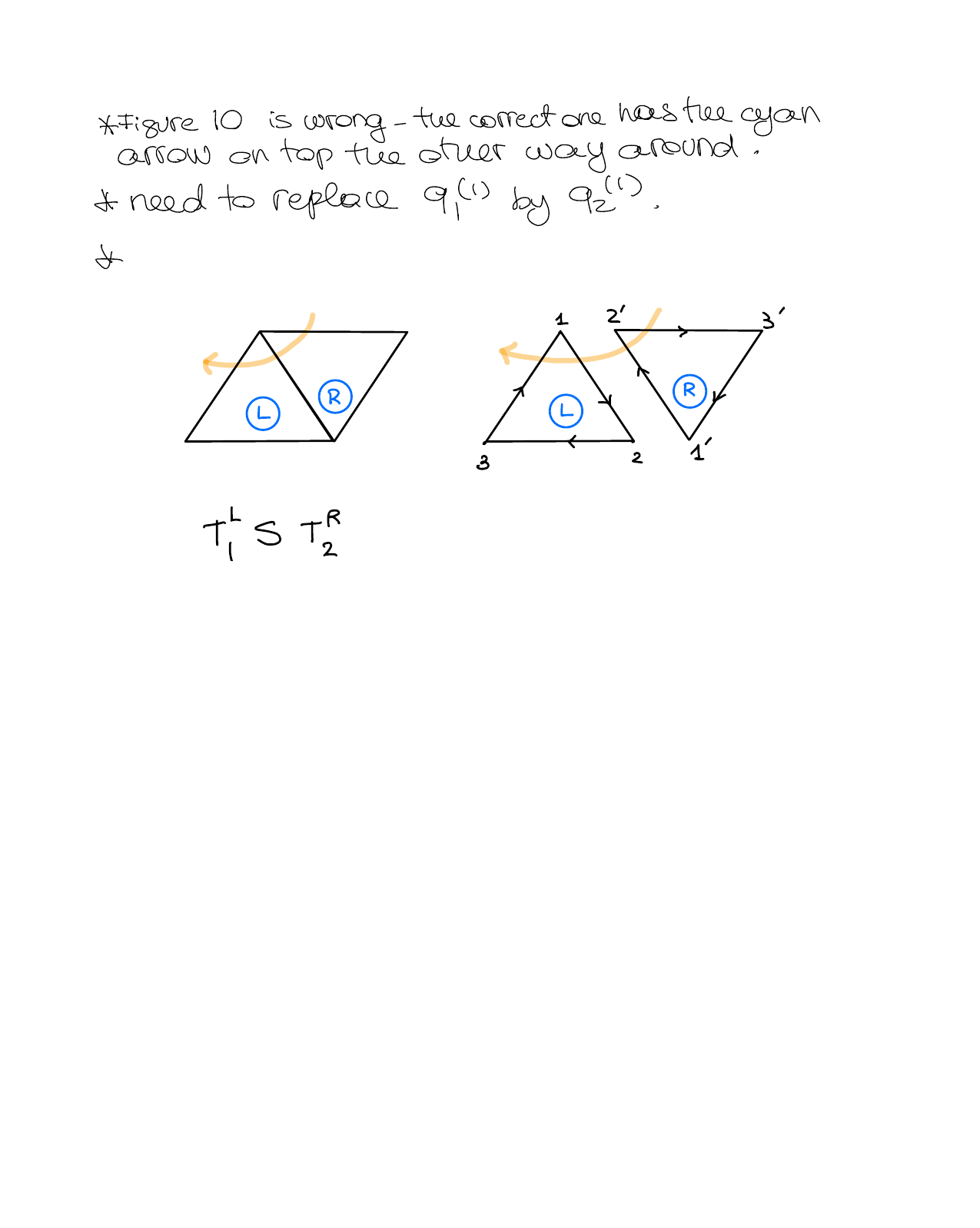}
		\caption{On the left two adjacent triangles, on the right,  the separated  triangles with labeled vertices.}\label{fig:glue2t}
	\end{figure}
	Then the matrix we are looking for will be given by
	$$
	M= T_1^{(L)}ST_2^{(R)},
	$$
	where $T_i^{(L)}$ denote the transport matrices associated to the left triangle and $T_i^{(R)}$ denote the ones associated to the triangle on the right. Using formulae \eqref{T-formulae} for each triangle, we find
	\begin{equation*}
		\begin{split}
			M=&S\prod_{k=1}^{n-1}\Big[H_{n-k}(Z_{k,0,n-k}^{(L)})\Big]\ \ L_{n-1}\prod_{j=1}^{n-2}\left[\prod_{i=1}^j\Big[L_{n-i-1}H_{n-i}(Z_{n-j-i,i,j}^{(L)})\Big]L_{n-1}\right] \\
            &
          \prod_{k=1}^{n-1}H_k(Z_{n-k,k,0}^{(L)})  \,S\,  S\prod_{k=1}^{n-1}\Big[H_{n-k}(Z_{n-k,k,0}^{(R)})\Big]\ \ L_{n-1}\\
          &\prod_{j=1}^{n-2}\left[\prod_{i=1}^j\Big[L_{n-i-1}H_{n-i}(Z_{j,n-j-i,i}^{(R)})\Big]L_{n-1}\right]\ \ \prod_{k=1}^{n-1}H_k(Z_{0,n-k,k}^{(R)})
		\end{split}
	\end{equation*}
	where we have added the labels ${}^{(L)}$ to the Fock-Goncharov variables in the left triangle and ${}^{(R)}$ to the ones in the right triangle. Using the fact that $S^2=(-1)^{n-1}\mathbb I$, and the diagonality of the matrices $H_k$, we obtain
	\begin{equation*}
		\begin{split}
			M=& S\prod_{k=1}^{n-1}\Big[H_{n-k}(Z_{k,0,n-k}^{(L)})\Big]\ \ L_{n-1}\prod_{j=1}^{n-2}\left[\prod_{i=1}^j\Big[L_{n-i-1}H_{n-i}(Z_{n-j-i,i,j}^{(L)})\Big]L_{n-1}\right] \\
			&
		(-1)^{n-1}	\prod_{k=1}^{n-1}H_k(Z_{n-k,k,0}^{(L)}Z_{k,n-k,0}^{(R)}) \ \ L_{n-1}\\
            &
            \prod_{j=1}^{n-2}\left[\prod_{i=1}^j\Big[L_{n-i-1}H_{n-i}(Z_{j,n-j-i,i}^{(R)})\Big]L_{n-1}\right]\ \ \prod_{k=1}^{n-1}H_k(Z_{0,n-k,k}^{(R)})
		\end{split}
	\end{equation*}
	Therefore the matrix $M$ does not depend on the Fock-Goncharov variables $Z_{n-k,k,0}^{(L)}$ and $Z_{n,n-k,0}^{(R)}$ separately, but only on their product. 
\end{example}

As seen in the previous Example \ref{ex:2tri}, in each successive product of matrices $ T_{i_{j-1}}^{(j-1)}ST_{i_j}^{(j)}$ in the factorization \eqref{eq:fact-FG}, we have that the Fock Goncharov variables corresponding to the edges that are glued don't appear separately, but only as products. Therefore, when gluing triangles, we replace the Fock Goncharov variables corresponding to the edges that are glued by their products. This procedure is called \textbf{amalgamation}.

\subsubsection{Amalgamated variables as cross ratios}

The internal Fock-Goncharov coordinates on a single triangle are defined in terms of triple ratios associated to the triple of flags at the vertices. However, there is a subtle point that deserves attention: the flags are associated to vertices, and to compare them we should transport them to the same point. Indeed, in the Fock Goncharov setting, one works with a flat principal $\mathbb PSL_n(\mathbb R)$-bundle on the surface, so that the flags live in different fibers. Therefore, to compute their triple ratio - which is an invariant of a triple of flags in a single vector space - one must first transport them to a common fiber via parallel transport. The result of parallel transport depends on the homotopy class of the path. In the case of a single triangle, there is no ambiguity because all paths between two given points are homotopic and the triple ratio is well-defined without any additional choices.
However, when gluing triangles, non simply connected surfaces may arise and therefore we no longer have a canonical way to compare flags. This problem can be addressed by adding extra triangles to take into account the monodromy due to transporting along paths that are not homotopic to each other. To illustrate this idea, we go back to the example of the cylinder with two bordered cusps on one boundary.

\begin{example}
    The fat graph of the cylinder with two bordered cusps on one boundary and its dual triangulation are depicted in \Cref{fig:fat-tri-12}:
    	\begin{figure}[!htb]
		\centering
		\includegraphics[width=.4\textwidth]{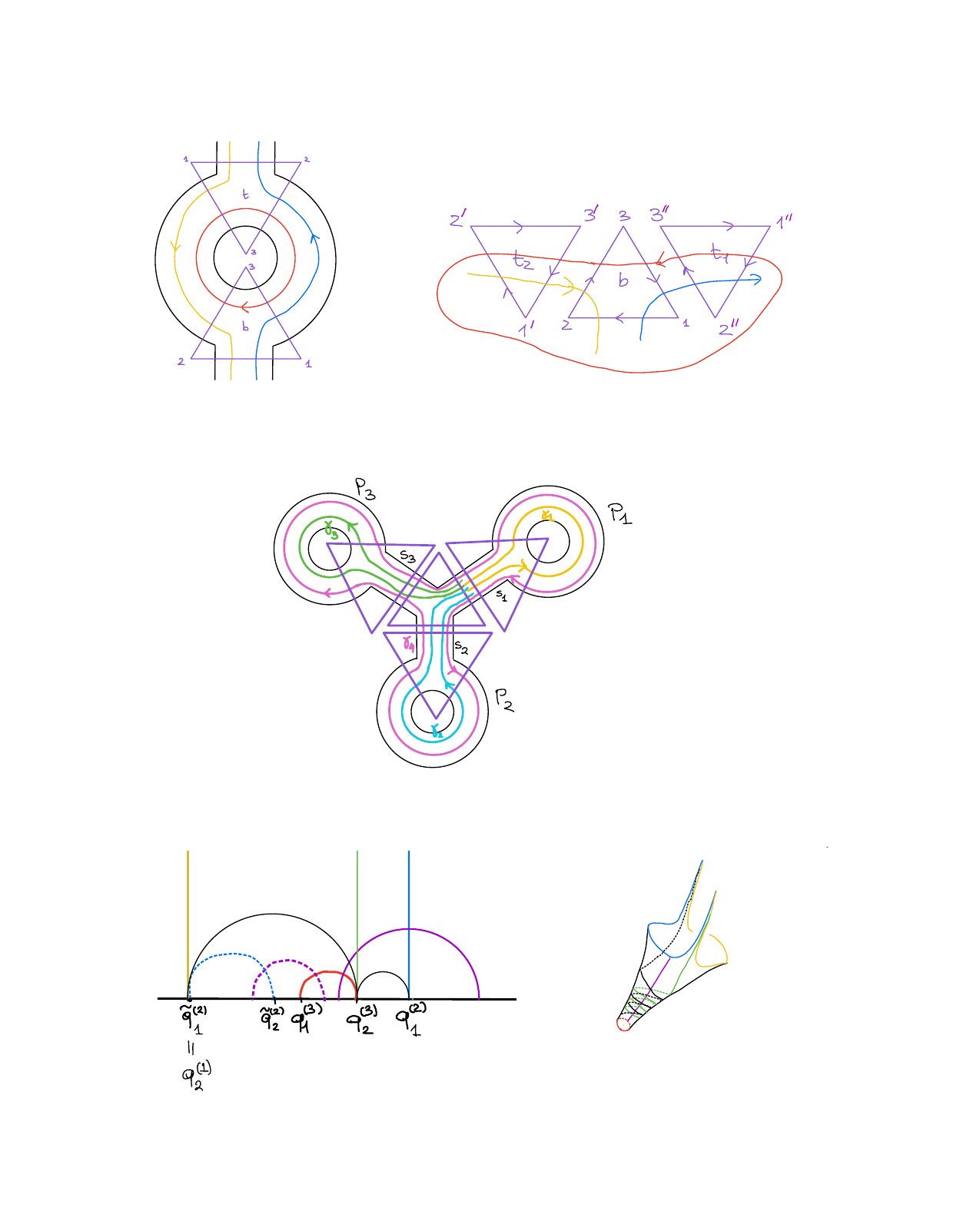}
		\caption{The fat graph of the cylinder with two bordered cusps on one boundary and its dual triangulation. The generating paths of the fundamental groupoid are displayed in red, blue and yellow.}\label{fig:fat-tri-12}
	\end{figure}
    
     When comparing the flags of the top triangle to the ones of the bottom triangle, we need to transport them to a common base point. For example, we could transport the flags of the top triangle to a base point in the bottom triangle along the blue path or along the yellow path. The resulting flags would be different. Therefore, since we have two different ways to transport the flags of the top triangle to the bottom one, we replace the top triangle with two triangles as in the left hand side of \Cref{fig:extra-triangle}. We then transport the flags of one of them along the yellow path and the ones of the other along the blue path as in the right hand side of  \Cref{fig:extra-triangle}. 
     
\begin{figure}[!htb]
		\centering
		\includegraphics[width=1.1\textwidth]{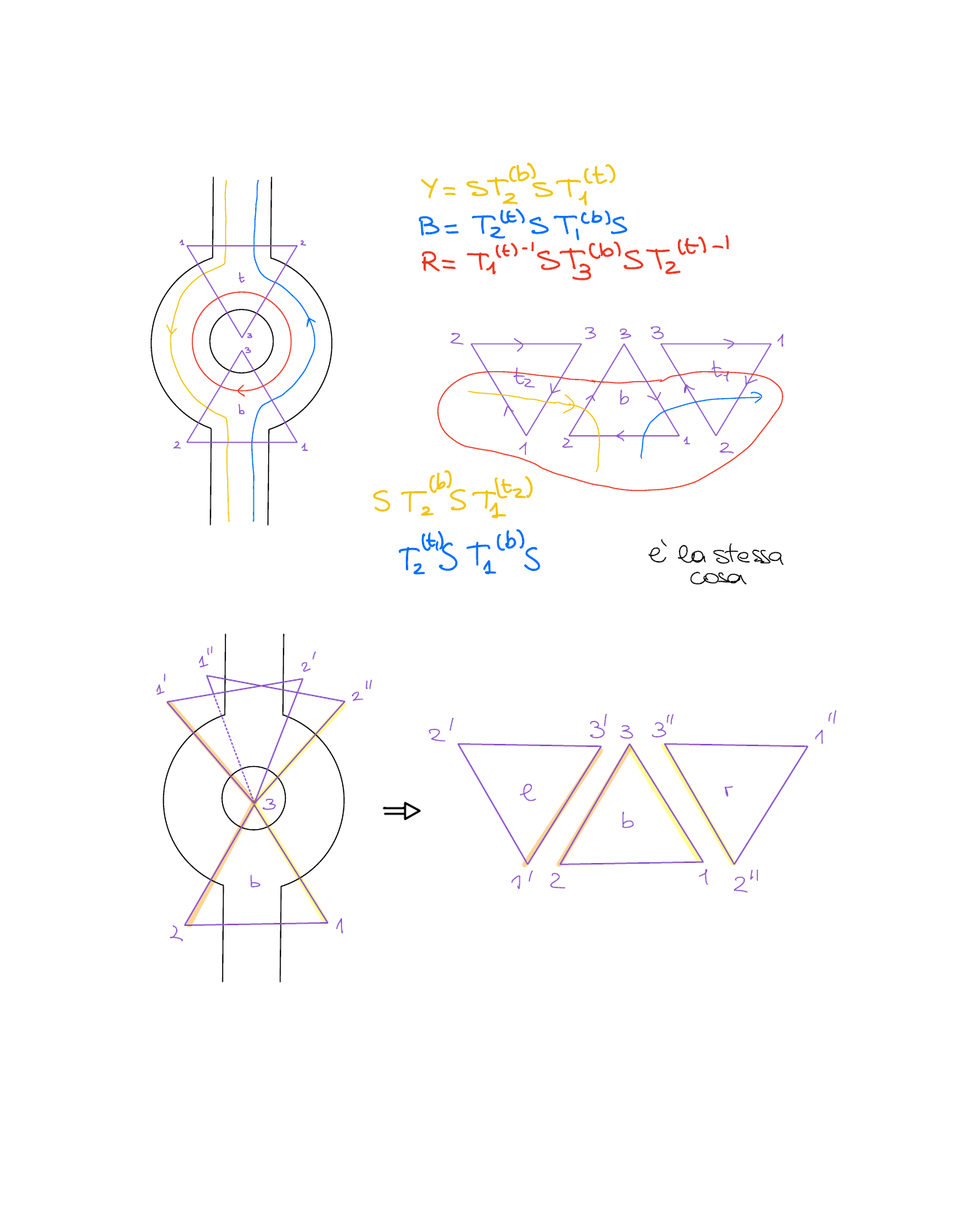}
		\caption{On the left, The fat graph of the cylinder with two bordered cusps on one boundary and its dual triangulation with one extra triangle. On the right the resulting gluings of the three triangles.}\label{fig:extra-triangle}
	\end{figure}

Therefore, in order to describe the geometric meaning of the amalgamated variables in terms of invariants of the flags, we need one extra triangle, in such a way that the flag associated to the point $1''$ is not the same as the flag associated to the point $2'$. 
\end{example}

As seen in the above example, if we attach copies of triangles, we can always assume that any two triangles are glued along only one edge. This allows us to explain the geometric meaning of the amalgamated variables. We do this in the case of $n=3$.

Let us consider two triangles, left and right, as in the left hand side of \Cref{fig:amalg-cross}. We denote by an index $L$ the lines and the Fock Goncharov variables in the left triangle and by an index $R$ the ones in the right one. 
For the left triangle we pick the same flags as in Example \ref{ex:3x3flags}. For the right triangle, because we identify the vertices $1'=2$ and $2'=1$ , we pick $F^{(1')}=F^{(2)}$, $F^{(2')}=F^{(1)}$ and $F^{(3')}$ in general position, namely
	$$
	 \langle e_1 +\delta e_2+\epsilon e_3 \rangle=F^{(3')}_1\subset  \langle  e_1 +\delta e_2+\epsilon e_3 , e_2+\eta e_3  \rangle=F^{(3')}_2 \subset  \mathbb R^3=F^{(3')}_3.
	$$
 In \Cref{ex:3x3flags4}, we already calculated the standard projective basis corresponding to the edge $12$ in the left triangle (denoted here as $w_i^{(L)}$):
$$
 w_1^{(L)}=\mu e_1,\quad  w_2^{(L)}=\mu \frac{\alpha\gamma-\beta}{\gamma}e_2,\quad  w_3^{(L)}=\mu(\alpha\gamma-\beta) e_3.
	$$
    Following the same method, we can calculate the standard projective basis corresponding to the edge $2'1'$ in the right triangle:
    $$
    w_1^{(R)}=\rho e_1,\quad w_2^{(R)} = \rho\frac{\delta\eta-\epsilon}{\eta} e_2,\quad w_3^{(R)}= \rho (\delta\eta-\epsilon) e_3.
    $$
These bases are related by a diagonal matrix
\begin{equation}\label{eq:basis-relation}
 \left(\begin{array}{c}
     w_1^{(L)}  \\
        w_2^{(L)} \\
       w_3^{(L)}  \\
 \end{array}
  \right)=   \frac{\mu}{\rho}
 \left(\begin{array}{ccc}
 1 & &\\
 &\frac{\alpha\gamma-\beta}{\gamma} \frac{\eta}{\delta \eta-\epsilon}& \\
& &\frac{\alpha\gamma-\beta}{\delta \eta-\epsilon} \\
\end{array}  \right)
  \left(\begin{array}{c}
     w_1^{(R)}  \\
        w_2^{(R)} \\
       w_3^{(R)}  \\
 \end{array}
  \right).
\end{equation}
On the other side, as seen in \Cref{ex:2tri}, the matrix comparing the two standard projective bases should coincide (up to a multiplicative factor) with   
  \begin{equation}\label{eq:basis-relation1}
     \begin{split}
 H_1(Z_{210}^{(L)}Z_{120}^{(R)})
    H_2(Z_{120}^{(L)}Z_{210}^{(R)})&=
    \opn{diag}\left(\left(Z_{210}^{(L)}Z_{120}^{(R)}\right)^{-\frac{2}{3}},\left(Z_{210}^{(L)}Z_{120}^{(R)}\right)^\frac{1}{3},\left(Z_{210}^{(L)}Z_{120}^{(R)}\right)^\frac{1}{3}\right)\\
   & \opn{diag}\left(\left(Z_{120}^{(L)}Z_{210}^{(R)}\right)^{-\frac{1}{3}},\left(Z_{120}^{(L)}Z_{210}^{(R)}\right)^{\frac{2}{3}},\left(Z_{120}^{(L)}Z_{210}^{(R)}\right)^{\frac{2}{3}} \right).
 \end{split} 
\end{equation}
Therefore, normalizing the diagonal matrix in \eqref{eq:basis-relation} in such a way that it has determinant equal to $1$, and comparing with the diagonal matrix in \eqref{eq:basis-relation1}, we obtain:
$$
Z_{210}^{(R)}Z_{120}^{(L)}=\frac{\alpha\gamma-\beta}{\gamma} \frac{\eta}{\delta \eta-\epsilon},\qquad
Z_{120}^{(L)} Z_{210}^{(R)}=\frac{\gamma}{\eta}.
$$
The right hand sides of these expressions are cross ratios of the lines highlighted in  \Cref{fig:amalg-cross}:

\begin{figure}[!htb]
		\centering
\includegraphics[width=1\textwidth]{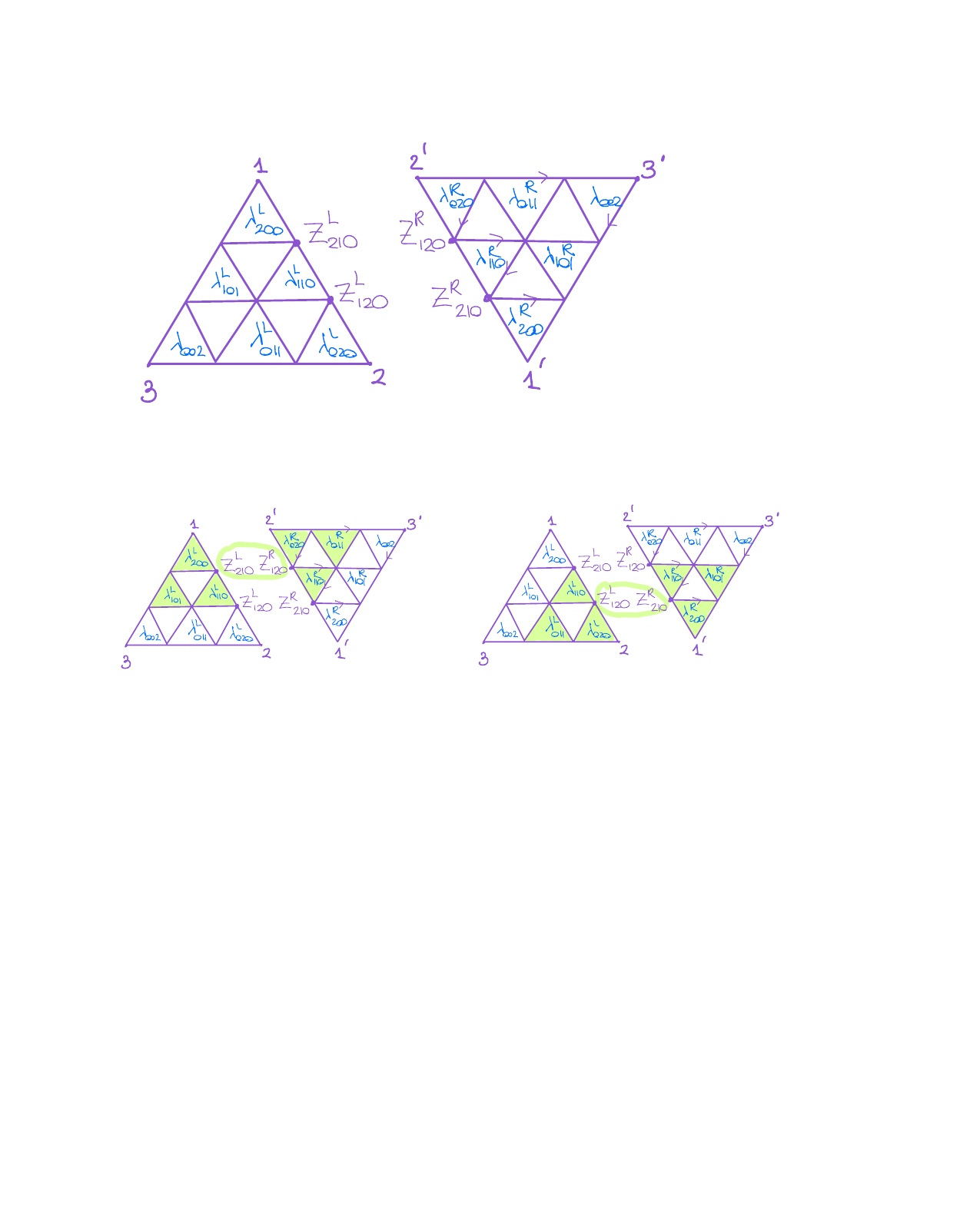}
		\caption{On the left, the amalgamation process of $Z_{210}^{(L)}$ and $Z_{120}^{(R)}$ and the corresponding lines entering the cross ratio are highlighted. On the right, the same for  $Z_{120}^{(L)}$ and $Z_{210}^{(R)}$.}\label{fig:amalg-cross}
	\end{figure}
    
Indeed, we have:
$$
\opn{cr}(\lambda_{200}^{(L)},\lambda_{110}^{(L)},\lambda^{(L)}_{101},\lambda^{(R)}_{011})= \frac{\alpha\gamma-\beta}{\gamma} \frac{\eta}{\delta \eta-\epsilon}
$$
and
$$
\opn{cr}(\lambda_{110}^{(L)},\lambda_{020}^{(L)},\lambda^{(L)}_{011},\lambda^{(R)}_{101})= \frac{\eta}{\gamma}
$$
Observe that since the edge $12$ in the left triangle coincides with the edge $2'1'$ in the right one, some lines are the same:
$$
\lambda_{200}^{(L)}=\lambda_{020}^{(R)},\quad 
\lambda_{110}^{(L)}=\lambda_{110}^{(R)},\quad 
\lambda_{020}^{(L)}=\lambda_{200}^{(R)}. 
$$

We stress that if we were to repeat the computation of the amalgamated variables for the cylinder with two bordered cusps on one boundary using only two triangles (without adding an extra one) and identifying two edges, all amalgamated variables would equal one, since the flags from one triangle would coincide with those of the other. Adding a third triangle is therefore needed to obtain non-trivial amalgamated variables from this construction. 

However, from a practical point of view, if the amalgamated variables are taken as free parameters from the outset, two triangles already suffice to parametrize the matrices. We explain this in detail in the next Example.

\begin{example}\label{ex:non-amalgamated}
	In the case of a cylinder with two bordered cusps on the boundary, looking at \Cref{fig:fat-tri-12}, 
the matrices $Y,B,R$ corresponding to the yellow, blue and red paths respectively  are
	\begin{equation}\label{CYRfactorization}
		\begin{aligned}
			Y&=  \ S \ T_2^{(b)} \ S \ T_1^{(t)},\\
			B&= \ T_2^{(t)} \ S \ T_1^{(b)} \ S,\\
			R&=\ T_1^{(t)^{-1}} \ S  \  T_3^{(b)} \ S \ T_2^{(t)^{-1}}.
		\end{aligned}
	\end{equation}
	For example, in the case $n=3$, these matrices only depend on $Z_{210}^{(t)}$, $Z_{120}^{(t)}$, $Z_{210}^{(b)}$, $Z_{120}^{(b)}$ and the following amalgamated variables:
	\begin{equation}
		Z_{Y1}=Z^{(t)}_{201}Z^{(b)}_{021}, \quad Z_{Y2}=Z^{(t)}_{102}Z^{(b)}_{012}, \quad 
		Z_{C1}=Z^{(b)}_{201}Z^{(t)}_{021},\quad 
		Z_{C2}=Z^{(b)}_{102}Z^{(t)}_{012},
	\end{equation}
    which we assume to be arbitrary.
    Therefore, in the case when we consider only two triangles, we have a total of $2\left(\begin{array}{c}
        n-1  \\
         2 
    \end{array}\right)$ internal variables, $2(n-1)$ amalgamated variables and $2(n-1)$ pinning variables.

Let us now consider three triangles instead, as in \Cref{fig:tre-triangoli}.

\begin{figure}[!htb]
		\centering
\includegraphics[width=.8\textwidth]{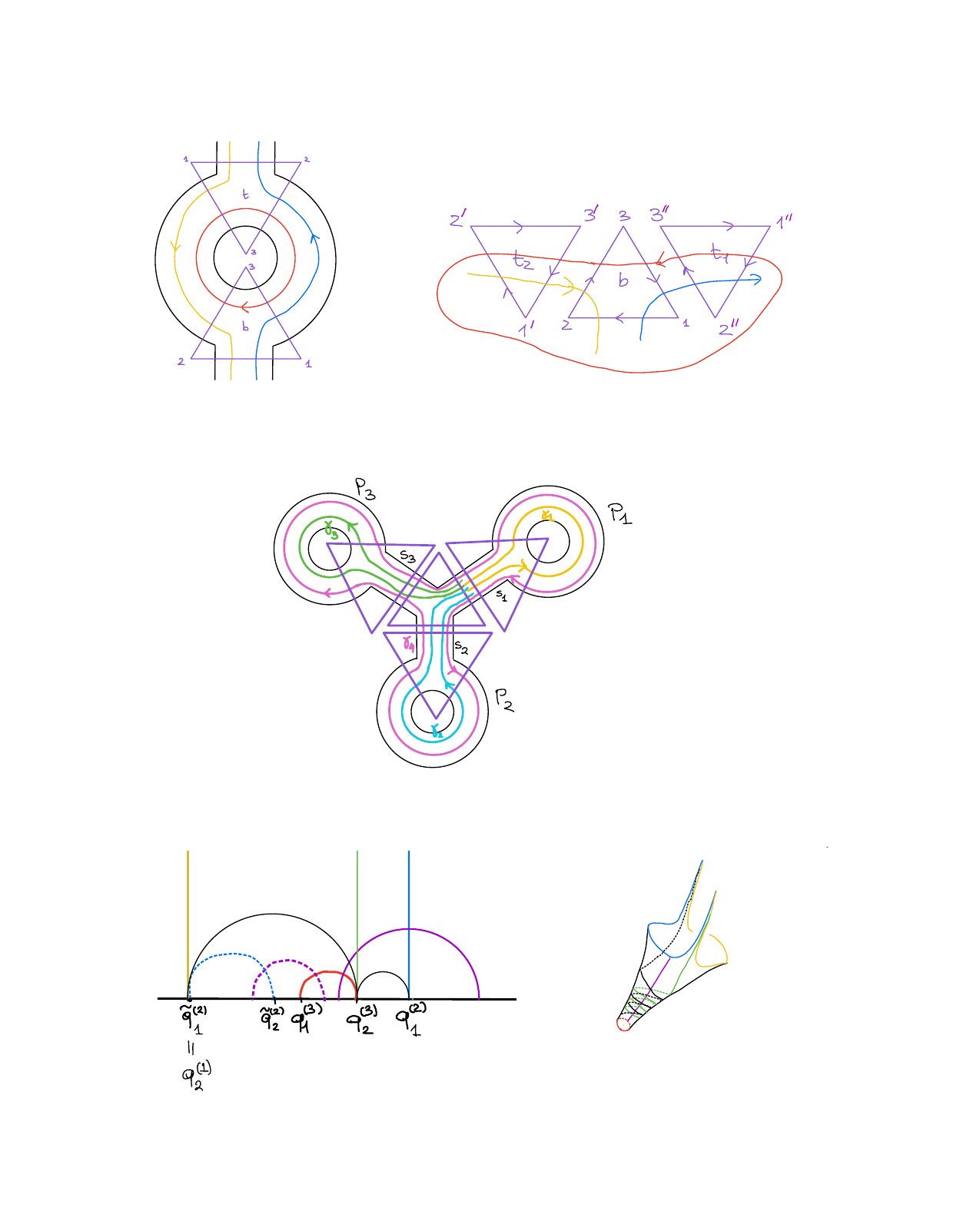}
		\caption{The three triangles as in \Cref{fig:extra-triangle} and the corresponding red, yellow and blue paths.}\label{fig:tre-triangoli}
	\end{figure}
    
Now, the matrices $Y,B,R$ corresponding to the yellow, blue and red paths respectively are given by 
\begin{equation}\label{CYRfactorization1}
		\begin{aligned}
			Y&=  \ S \ T_2^{(b)} \ S \ T_1^{(l)},\\
			B&= \ T_2^{(r)} \ S \ T_1^{(b)} \ S,\\
			R&=\ T_1^{(l)^{-1}} \ S  \  T_3^{(b)} \ S \ T_2^{(t)^{-1}}.
		\end{aligned}
	\end{equation}
    Note that the Fock-Goncharov variables corresponding to the edges $2'3'$ and $3" 1"$ don't enter the matrices \eqref{CYRfactorization1}. Moreover, we have to identify the left and right triangles, therefore we set 
\begin{equation}\label{eq:identlr}
      Z_{ijk}^{(l)}= Z_{ijk}^{(r)},\quad\forall i,j,k.
\end{equation}
This implies that we have a total of $2\left(\begin{array}{c}
        n-1  \\
         2 
    \end{array}\right)$ internal variables (because of the identification \eqref{eq:identlr}),  $2(n-1)$ amalgamated variables and $2(n-1)$ pinning variables, because the ones corresponding to the edges $2'3'$ and $3" 1"$ don't enter the matrices \eqref{CYRfactorization1}. Therefore the number of variables involved in the two descriptions in the same. Finally, thanks to the identification \eqref{eq:identlr}, we have that $T_i^{(l)}=T_i^{(r)}$ for $i=1,2,3$ and therefore \eqref{CYRfactorization} and \eqref{CYRfactorization1} give rise to the same formulae.    
\end{example}

\begin{remark}
    The fact that three triangles appear in the description of the paths on the surface $\Sigma$ of figure \ref{fig:fat-tri-12} has a natural interpretation in terms of universal covers.
    Indeed, as explained at the beginning of Section \ref{se:RS}, for any $x_0 \in \Sigma$ one can consider the universal cover $\widetilde{\Sigma}$ of $\Sigma$ based at $x_0$, together with its projection map $p: \widetilde{\Sigma} \to \Sigma$.
    By identifying homotopy classes of paths with their target map, one obtains that $\widetilde{\Sigma}$ is again a Riemann surface.
    Thus, one may triangulate $\widetilde{\Sigma}$ in the same way as as $\Sigma$.
    In fact, one can say more: rather than choosing an arbitrary triangulation, one can take a given triangulation of $\Sigma$ and pull it back to $\widetilde{\Sigma}$ via $p$.
    This gives a triangulation of $\widetilde{\Sigma}$ made by infinitely many triangles.
    However, the action of the deck transformation group $\operatorname{Deck}(\Sigma)$ identifies many of these triangles, so that only two (labeled t for top and b for bottom in \Cref{fig:fat-tri-12}) remain as representatives in the quotient.
    Since the action of $\operatorname{Deck}(\Sigma)$ encodes exactly the kernel of the covering map $p: \widetilde{\Sigma} \to \Sigma$, one has $\Sigma \cong \widetilde{\Sigma}\slash_{\textrm{Deck}(\Sigma)}$.
    Consequently,  the two triangles in the quotient correspond precisely to the triangles on $\Sigma$.
    Therefore, the initial use of three triangles reflects what happens on $\widetilde{\Sigma}$: when computing the variables, one has to consider actual flags and transport them along paths using the corresponding holonomy.
    However, the particular layer chosen in $\widetilde{\Sigma}$ is not essential for such computations, since these values repeat under the symmetries induced by $\operatorname{Deck}(\Sigma)_{g,s}$.
    Hence, the projection map assigns variables to only two triangles.
\end{remark}

\subsection{Relation between Fock Goncharov variables and shear coordinates in the case $n=2$}\label{suse:FGr}

In the shear coordinate description of the Teichm\"uller space and of the bordered cusped Teichm\"uller space, the finite part of a surface $\Sigma_{g,s,m}$ is triangulated by ideal triangles, and any path in $\Sigma_{g,s,m}^f$ is mapped to a matrix factorized in terms of
edge, left and right matrices defined in \eqref{eq:generators}. These can be expressed in terms of the following specialization to $n=2$ of the matrices in \Cref{definition:blocks}:
\begin{equation}\label{eq:FG-matrices}
	S:= \left(\begin{array}{cc}
		0    & -1 \\
		1  & 0
	\end{array}\right),\quad 
	L_1:= \left(\begin{array}{cc}
		1    & 0 \\
		1  & 1
	\end{array}\right),\quad
	H(z):= \left(\begin{array}{cc}
		\frac{1}{\sqrt{z}}    & 0 \\
		0  & \sqrt{z}
	\end{array}\right).
\end{equation}
Indeed, 
$$
X(s)=S H(e^{s}),\quad 
L=S L_1 S L_1,\quad 
R=- S L_1.
$$
This shows that any factorization of a matrix representing a path obtained by the shear coordinate description can be recast into a factorization in terms of the Fock-Goncharov description. 

Observe that in the case $n=2$ there are no internal Fock Goncharov coordinates, so that all Fock Goncharov coordinates correspond to the ones produced by the pinnings. We saw that on edges that belong to two triangles, these amalgamate into variables that are expressed by cross ratios. 

The exponentiated shear coordinates can also be expressed in terms of cross ratios of quadruples of lines associated to the vertices of the ideal triangulation and their images. 
Let us see how this works in the pair of pants example.

\begin{example}
	In the case of the pair of pants, in Example \ref{ex:no-fun-dom}, we saw that in order to reconstruct the Fuchsian group uniquely, we need to take the ideal triangulation and add two copies of one of the triangles. Then the 
	exponentiated shear coordinates  are expressed in terms of the cross ratios of the vertices of these four triangles as follows:
	\begin{equation}
		\begin{split}
			& \textrm{cr}(q_2^{(1)},q_2^{(3)},\gamma_1(q_2^{(2)}),q_2^{(2)})=-e^{-s_1},\\
			& \textrm{cr}(q_2^{(3)},q_2^{(2)},\gamma_3(q_2^{(1)})),q_2^{(1)})=-e^{-s_2},\\
			& \textrm{cr}(q_2^{(1)},q_2^{(2)},q_2^{(3)},\gamma_2(q_2^{(3)}))
			=-e^{-s_3}. 
		\end{split}
	\end{equation}
\end{example}

\subsection{Chewing-gum moves as inverse amalgamation}

In the case of a surface with no bordered cusps, all edges of triangles are amalgamated. In this case, we are dealing with the Fock Goncharov moduli space of $\mathbb PSL_n(\mathbb R)$, which coincides with the higher Teichm\"uller space. 

\begin{example}
 Consider the sphere with 4 boundaries for which the fat graph is drawn in \Cref{fig:4-holed-sphere}. The fundamental group $\pi_1(\Sigma_{0,4})$ is generated by the four loops in the same figure and in the $n\times n$ case the higher Teichm\"uller space
 $$
\mathcal T_{0,4}=\opn{Hom}'(\pi_1(\Sigma_{0,4}),\mathbb PSL_n(\mathbb R))/\mathbb PSL_n(\mathbb R),
$$
has dimension $2(n^2-1)$. The Procesi coordinates are given by all traces of words in the generators, up to degree $n$, modulo the relations from the characteristic polynomial and the constraint that the product is $1$.
In the Fock Goncharov setting we have $4$ triangles with $(n-1)(n-2)/2$ internal variables each and six amalgamated edges, therefore we have a total of 
$$
2(n-1)(n-2)+6(n-1)=2(n^2-1),
$$
as expected. Notice that in the Fock-Goncharov description, we have changed a little bit the starting and ending point of the generating loops in such a way to have them between triangles. This is irrelevant thanks to the overall conjugation by $ PSL_n(\mathbb R)$. Moreover, even if we choose a starting and an ending point, we have freedom of cyclic permutations inside the traces, which means that we can always amalgamate all variables. 

\begin{figure}[!htb]
		\centering
\includegraphics[width=.6\textwidth]{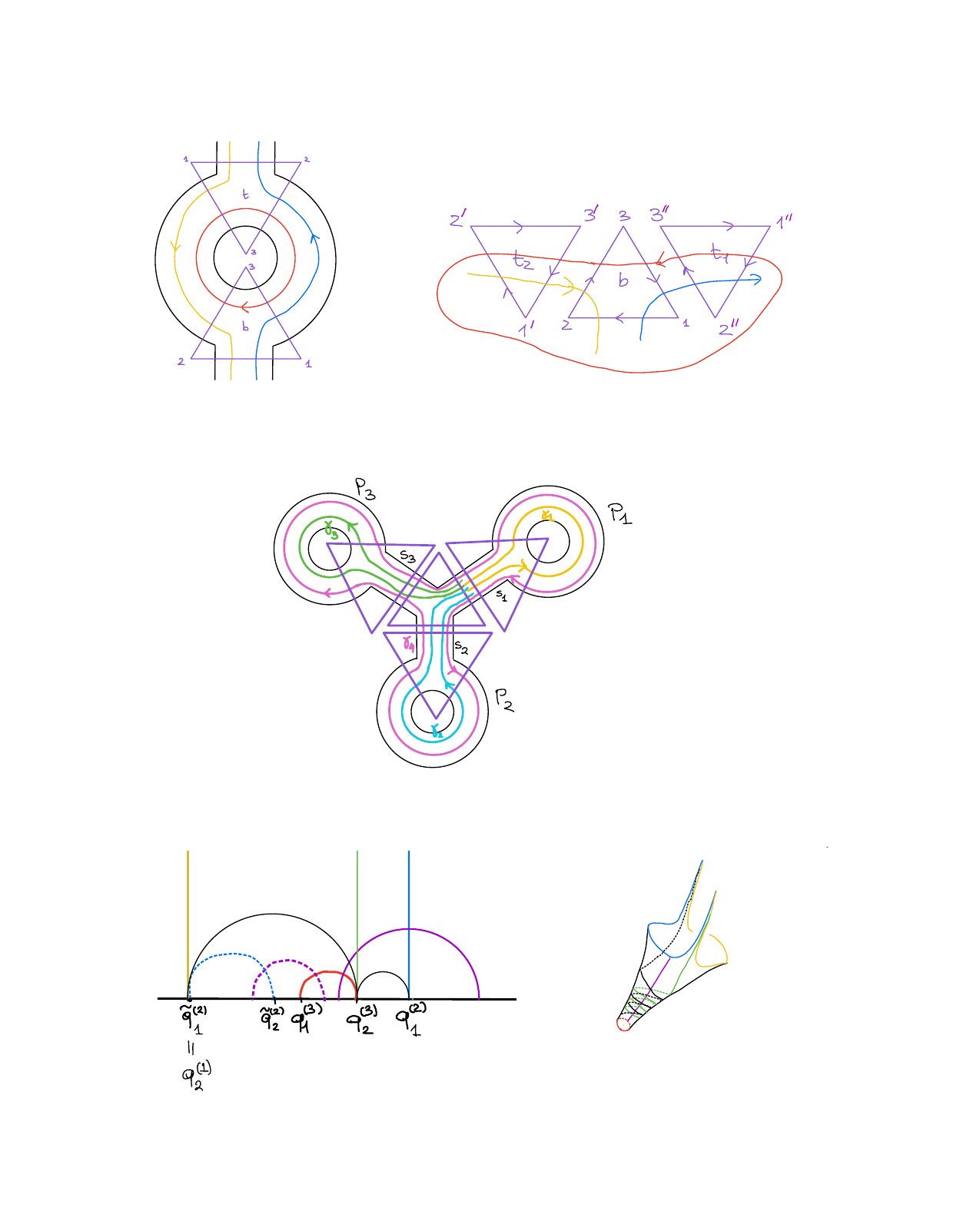}
		\caption{The fat graph of a sphere with four boundary components, its dual triangulation and the loops corresponding  the fundamental group generators.}\label{fig:2dimloops}
	\end{figure}
For example, choosing the base point of all loops on the edge labeled by $s_1$, the transport matrix factorization can be read off from \Cref{fig:2dimloops}: denoting by {$O$} the matrix corresponding to the ochre loop, {$B$} the matrix of the blue loop, {$G$} the one of the green loop and ${P}$  that of the pink one, we have
\begin{equation}\label{2loopfactiorization}
\begin{aligned}
    {O}&=-\ S \ T_3^{(r)} \ S \ T_2^{(r)} \ S,\\
    {B}&=- \ T_2^{(c)} \ S \ T_3^{(d)}\ S \ T_2^{(d)} \ S \ T_2^{(c)-1},\\
    {
    G}&=- \ T_1^{(c)-1} \ S \ T_3^{(l)} \ S \ T_2^{(l)} \ S \ T_1^{(c)},\\
    {P}&=\ T_1^{(c)-1} \ S \ T_2^{(l)-1} \ S \ T_3^{(l)-1} \ S  \ T_3^{(c)-1} \ S \ T_2^{(d)-1} \ S  \ T_3^{(d)-1} \ S  \ T_2^{(c)-1} \ S  \  T_2^{(r)-1} \ S \ T_3^{(r)-1} \ S,
\end{aligned}
\end{equation}
where $T_i^{(a)}$ stands for the quantum transport matrix $T_i$ in the Fock-Goncharov variables $Z_{\alpha}^{(a)}$ of the triangle $(a)$, the labels indicating the corresponding positions of the triangles: l for left, r for right, d for down and c for central. 

With the choice of starting and ending point between the edge $23$ of the right triangle and the edge $12$ of the central one, all pinning variables are automatically amalgamated except the ones corresponding to these two edges. However, taking traces, we can cyclically reorder in order to amalgamate. For example
\begin{equation*}
    \begin{split}
    \opn{Tr}(OB) & = \opn{Tr}(\ S \ T_3^{(r)} \ S \ T_2^{(r)} \ S  \ T_2^{(c)} \ S \ T_3^{(d)}\ S \ T_2^{(d)} \ S \ T_2^{(c)-1} )\\
    &    = \opn{Tr}\left(\ \prod_{k=1}^{n-1}\Big[H_{n-k}(Z_{0,n-k,k}^{(r)})\Big] \dots \prod_{k=1}^{n-1}H_k(Z_{k,0,n-k}^{(r)})
 \ S \ T_2^{(r)} \ S  \ T_2^{(c)} \ S \ T_3^{(d)}\right.\\
 &\left.\ S \ T_2^{(d)} \ S \
\prod_{k=1}^{n-1}H_k(Z_{0,n-k,k}^{(c)^{-1}})\dots \prod_{k=1}^{n-1}H_{n-k}(Z_{n-k,k,0}^{(c)^{-1}}) S\right)=\\
&= \opn{Tr}\left(\ \prod_{k=1}^{n-1}H_{n-k}(Z_{n-k,k,0}^{(c)}) \prod_{k=1}^{n-1}\Big[H_{n-k}(Z_{0,n-k,k}^{(r)})\Big] \dots \prod_{k=1}^{n-1}H_k(Z_{k,0,n-k}^{(r)})
 \ S \ T_2^{(r)} \ S \right.  \\
 &\left.T_2^{(c)} \ S \ T_3^{(d)} \ S \ T_2^{(d)} \ S \
\prod_{k=1}^{n-1}H_k(Z_{0,n-k,k}^{(c)^{-1}})\dots S\right),
  \end{split}
\end{equation*}
so that the pinning variables of  the edge $23$ of the right triangle and the edge $12$ of the central one become amalgamated as well.
\end{example}

In the case of surfaces with bordered cusps instead, some edges remain un-amalgamated as in \Cref{ex:non-amalgamated}. Indeed, in the definition of bordered cusped  Teichm\"uller space 
$$
\opn{Hom}'(\pi_1(S,P), \mathbb P SL_n(\mathbb C))/U_P,
$$
the quotient by mixed multiplication by unipotent radicals appears. If we generate the unipotent radicals by lower triangular elements, then the natural invariant operation needed is the one picking the element $1n$ in the matrix. Therefore in the $n\times n$ case, we define 
 $$
 K=\left(\begin{array}{cccc}
   0&\dots  &0 & 0 \\
   0&0& \dots   & 0 \\
     -1 &0&\dots &0 \\ 
 \end{array}\right),
 $$
and again we take $\opn{Tr}_K$. Because  $\opn{Tr}_K$ is no longer invariant under cyclic permutation,  for paths that have different starting and ending points, we can no longer amalgamate the edges at the start and at the end. Therefore some pinning variables remain.

Looking back at the chewing-gum moves of Section~\ref{se:confl} from the perspective of Fock--Goncharov theory, we can now recognize them as, in a precise sense, the \emph{inverse of amalgamation}.

Recall from Section~\ref{se:glue} that amalgamation arises whenever two triangles of the ideal triangulation are glued along an edge. Before gluing, each side of each triangle (name them left and right) carries its own collection of $n-1$ pinning variables $Z^{(L)}_{\alpha}$, $Z^{(R)}_{\alpha}$. As we showed in \Cref{ex:2tri}, when the two sides are identified the resulting transport matrix depends only on the {amalgamated variables} $Z_\alpha:=Z^{(L)}_{\alpha} Z^{(R)}_{\alpha}$. The $2(n-1)$ pinning variables on the two separated sides collapse to $n-1$ amalgamated variables on the now internal edge of the fat-graph. 

The chewing-gum move performs exactly the opposite operation. As we saw in Section \ref{se:confl}, see Examples \ref{ex:s3toinf}, \ref{ex:s3toinf-1} and \ref{ex:PV}, to merge two holes of $\Sigma_{g,s}$ we select one internal edge of the fat-graph carrying a shear coordinate $s$ in the rank-$2$ case
and break it open via the substitution
\begin{equation}\label{eq:chewing-gum-substitution}
s\;=\; k_1 - \log(\epsilon) + k_2, \qquad \epsilon \to 0.
\end{equation}
In the rank $n$ case, each internal edge carries $n-1$ amalgamated Fock--Goncharov variables $Z_1,\dots,Z_{n-1}$. In the chewing-gum move, we set
\begin{equation}\label{eq:chewing-gum-substitution1}
Z_i\;=\;Z_i^{(L)} - \log(\epsilon) + Z_i^{(R)}, \qquad \epsilon \to 0.
\end{equation}

Therefore, while the amalgamation glues two open edges of the fat-graph into a closed one and combines pinning variables into their amalgamated products, the chewing-gum move tears a closed edge apart into two open ones and decouples the amalgamated variable back into independent pinning variables.

This inverse relationship clarifies the position of the bordered cusped Teichm\"uller space within Fock--Goncharov theory. The higher Teichm\"uller space of a closed-boundary surface $\Sigma_{g,s}$ is described by a fat-graph all of whose edges are closed, hence carry only amalgamated variables; the bordered cusped Teichm\"uller space 
$$
\opn{Hom}'(\pi_1(S,P), \mathbb P SL_n(\mathbb C))/U_P,
$$
is obtained by performing $m$ (non-degenerate) chewing-gum moves, each of which un-amalgamates one closed edge into two open edges carrying the full complement of $2(n-1)$ pinning variables. This corresponds to the Goncharov Shen moduli space of pinnings as proved in \cite{FMN}.

\section*{Appendix}\label{app:hyp-geo}
\addcontentsline{toc}{section}{Appendix}

In these lecture notes, we will only use the Poincar\'e upper half-plane 
\begin{align*}
	\mathbb H &:= \{z \in \mathbb C| \operatorname{Re}(z) > 0\},\\
	\partial \mathbb H &:= \{z \in\mathbb C|\operatorname{Re}(z) = 0\} \cup \{ \infty \},
\end{align*}
which is endowed with the hyperbolic metric 
$$
d s^2=\frac{|d z|^2}{\operatorname{Im}(z)^2}.
$$ 
The geodesics in $\mathbb H$ are either semi--circles with center on the real axis or half--lines
parallel to the imaginary axis.

\subsection{Hyperbolic distance in $\mathbb H$}

Given any two points $P,Q\in\mathbb H$,  their hyperbolic distance is
$$
d_{\mathbb H}(P,Q):=\int_\gamma\frac{d x^2+d y^2}{y^2},
$$
where $\gamma$ is the unique geodesic through $P$ and $Q$.

Here we list a few useful relations between hyperbolic and Euclidean distances:
the first 
comes from Proposition 2.14 in \cite{Ser}:
\begin{equation}\label{eq:hyp-eu1}
	\tanh{\frac{d_{\mathbb H}(P,Q)}{2}}=
	\frac{|P-Q|}{|P-\overline Q|}
\end{equation}
the second can be derived from \eqref{eq:hyp-eu1} with some work:
\begin{equation}\label{eq:hyp-eu}
	\left| \sinh{\frac{d_{\mathbb H}(P,Q)}{2}}\right|^2=
	\frac{|P-Q|^2}{4 \im(P)\im(Q)}
\end{equation}

Finally, using some simple Euclidean geometry, one can derive from \eqref{eq:hyp-eu1} a formula in terms of cross ratio:
$$
d_{\mathbb H}(P,Q)=  \ln\textrm{cr}(P',Q';Q,P),
$$
where $P',Q'$ are the end points of the geodesic between $P$ and $Q$, see \Cref{fig:4-points}.

\begin{figure}[!htb]
	\centering
	\includegraphics[width=.5\textwidth]{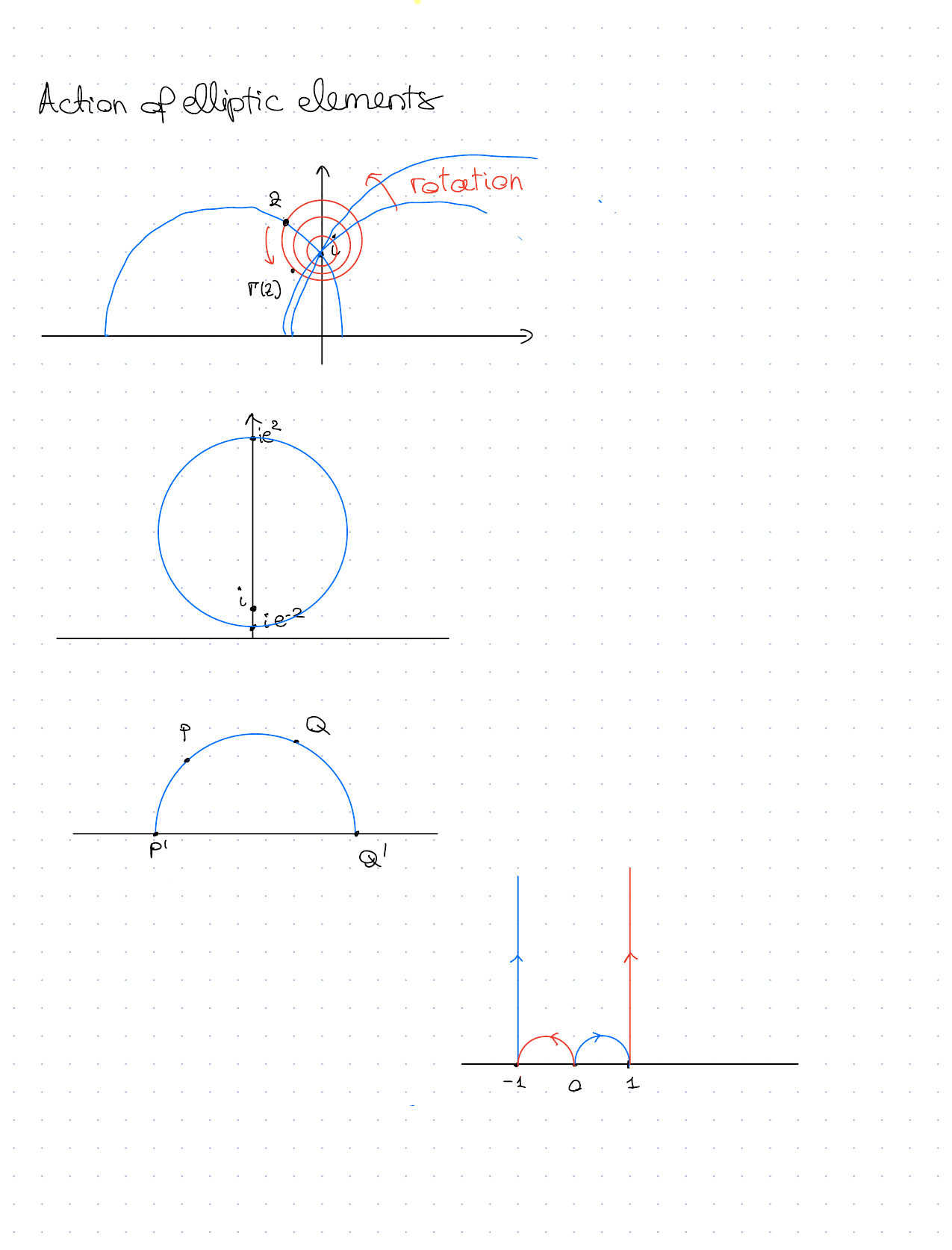}
	\caption{Points $P',Q'$ on the absolute.}\label{fig:4-points}
\end{figure}

Because the points $P',P,Q,Q'$ lie on the same circle, their cross ratio is a real number.

We remind the reader that
the cross ratio of $4$ finite points is defined by
$$
\textrm{cr}(P',Q';Q,P):= \frac{P'-Q}{P'-P}\frac{Q'-P}{Q'-Q},
$$
and if one of the points is infinite, we simply take the limit, for example
$$
\textrm{cr}(P',\infty;Q,P):=\frac{P'-Q}{P'-P}.
$$
Therefore, in the case of points placed on a vertical geodesic, because $P',P,Q$ have the same real part, assuming $P_y:=\text{Im}(P)<\text{Im}(Q)=:Q_y$, we obtain
$$
d_{\mathbb H}(P,Q)=  \ln\frac{Q_y}{P_y}.
$$

\begin{definition}
	A hyperbolic circle of center $z_0$ and radius $\rho$ is the locus of points
	$$
	\{z\in\mathbb H| d_{\mathbb H}(z,z_0) =\rho\}.
	$$
\end{definition}

\begin{lemma}
	Euclidean circles in the upper half plane are hyperbolic circles with a shifted center. If $z_1$ denotes the center of the Euclidean circle and $r$ its radius, the corresponding center $z_0$ of the hyperbolic circle is 
	$z_0=\textrm{Re}(z_1) + i \sqrt{\im(z_1)^2-r^2}$ and the hyperbolic 
	radius is $\rho=\operatorname{arctanh}\left(\frac{r}{\im(z_1)}\right)$. 
\end{lemma}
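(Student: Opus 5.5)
We will show that a Euclidean circle $C$ with centre $z_1$ and radius $r<\im(z_1)$, lying in $\mathbb H$, is the set of points at fixed hyperbolic distance $\rho$ from the point $z_0$ of the statement. Write $z_1=x_1+iy_1$. By horizontal translation, which is an isometry in $\mathbb PSL_2(\mathbb R)$, we may assume $x_1=0$.

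First we use symmetry to locate the hyperbolic centre. Reflection in the imaginary axis, $z\mapsto-\bar z$, is an isometry of $\mathbb H$ and preserves $C$. Hence any hyperbolic centre must lie on the imaginary axis, and we may write $z_0=i t$ for some $t>0$. The circle $C$ meets the imaginary axis at $i(y_1-r)$ and $i(y_1+r)$. These are the endpoints of a diameter along a vertical geodesic, so they must be equidistant from $it$. By the vertical distance formula $d_{\mathbb H}(iP_y,iQ_y)=\ln(Q_y/P_y)$ from the Appendix, this gives
$$\ln\frac{t}{y_1-r}=\ln\frac{y_1+r}{t},$$
so $t^2=y_1^2-r^2$. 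This forces $t=\sqrt{\im(z_1)^2-r^2}$, which matches the claimed $z_0$. The common distance is
$$\rho=\tfrac12\ln\frac{y_1+r}{y_1-r}=\operatorname{arctanh}\left(\frac{r}{y_1}\right),$$
which is the claimed radius.

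Second, we check that every point of $C$, and not only the two on the axis, lies at distance $\rho$ from $z_0$. We use \eqref{eq:hyp-eu1}: the locus $d_{\mathbb H}(z,z_0)=\rho$ is $\{z:|z-z_0|=\tanh(\rho/2)\,|z-\bar z_0|\}$. This is an Apollonius circle relative to the points $z_0$ and $\bar z_0=-it$, so it is a Euclidean circle. It is symmetric about the imaginary axis, so its centre lies on that axis. It passes through $i(y_1\pm r)$, since we already know these points are at distance $\rho$. A circle whose centre lies on the imaginary axis and which passes through these two points must have them as the ends of a diameter, so it is exactly $C$.

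Finally, we need the two checks in the previous step. The first is the Apollonius fact that $|z-a|=k|z-b|$ with $k=\tanh(\rho/2)<1$ defines a circle. The second is the hyperbolic identity behind $\tfrac12\ln\frac{1+x}{1-x}=\operatorname{arctanh}x$, applied with $x=r/y_1$. Both are routine. The main care is simply to keep the hypothesis $r<\im(z_1)$, which ensures that $C$ lies in $\mathbb H$ and that $t$ is real.
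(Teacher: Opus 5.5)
Your proof is correct, but it is organised differently from the paper's.

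\textbf{The paper's route.} It takes the claimed $z_0$ as given and substitutes $P=z_1+re^{i\theta}$ into \eqref{eq:hyp-eu1}. It then checks that $|P-z_0|/|P-\bar z_0|$ is independent of $\theta$. With $z_1=iy_1$ and $t=\sqrt{y_1^2-r^2}$, the two squared moduli factor as $2(y_1-t)(y_1+r\sin\theta)$ and $2(y_1+t)(y_1+r\sin\theta)$, so their ratio does not depend on $\theta$. The radius is then obtained by applying \eqref{eq:hyp-eu1} to the vertical diameter $z_1\pm ir$, which gives $\tanh\rho=r/\im(z_1)$.

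\textbf{Your route.} You derive $z_0$ instead of guessing it: it is the hyperbolic midpoint of the vertical diameter, at the geometric-mean height $\sqrt{(y_1-r)(y_1+r)}$. You read off $\rho$ from the logarithmic distance formula on vertical geodesics. You then replace the $\theta$-computation by a structural argument. The locus $\{d_{\mathbb H}(z,z_0)=\rho\}$ is an Apollonius circle for $z_0,\bar z_0$, hence a Euclidean circle centred on the axis. Such a circle is determined by its two points on the axis, so it equals $C$.

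\textbf{Comparison.} Your approach explains where the formula for $z_0$ comes from. It also gives the converse for free: every hyperbolic circle is a Euclidean circle. The paper's approach is a shorter, self-contained verification once the centre is known.

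One minor point: your remark that any hyperbolic centre must lie on the axis tacitly uses uniqueness of the centre. You only use it to motivate the ansatz $z_0=it$, and your second step verifies everything independently, so nothing is missing.
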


\begin{proof}
	First let us prove that the hyperbolic distance between any point $P$ on the Euclidean circle and $z_0$ only depends on $r$ and $\im(z_1)$.
	Apply \eqref{eq:hyp-eu1} to $P=z_1+ r \exp{i\theta}$ and $Q=z_0$:
	\begin{equation}
		\begin{split}
			\tanh{\frac{d_{\mathbb H}(P,Q)}{2}}=
			\frac{|z_1+r \exp{i\theta} - {z_0}|}{|z_1+r \exp{i\theta} - \overline{z_0}|}=
			\frac{\left|i \im(z_1)- i \sqrt{\im(z_1)^2-r^2}+r \exp{i\theta}\right| }{\left| i \im(z_1)+ i \sqrt{\im(z_1)^2-r^2}+r \exp{i\theta}\right|}=\\
			\quad=\sqrt{\frac{\im(z_1)- \sqrt{\im(z_1)^2-r^2}}{\im(z_1)+ \sqrt{\im(z_1)^2-r^2}}}.\\
		\end{split}
	\end{equation}
	This proves that Euclidean circles are hyperbolic circles. Let us now compute the hyperbolic radius. To do this, we consider two points $P_1,P_2$ that lie on the intersection between the Euclidean circle and the vertical geodesic through $z_0=\textrm{Re}(z_1) + i \sqrt{\im(z_1)^2-r^2}$. Then $P_1=z_1-i r$, $P_2=z_1+i r$ and applying \eqref{eq:hyp-eu1}  we obtain:
	$$
	\tanh\frac{d_{\mathbb H}(P_1,P_2))}{2}=\frac{r}{\im(z_1)}.
	$$
	This concludes the proof. \end{proof}

\begin{example}
	The hyperbolic circle of center $i$ and radius $\rho=2$ intersects the imaginary axis at $i e^{2}$ and $i e^{-2}$, see \Cref{fig:hyp-circle}.
	
	\begin{figure}[!h]
		\centering
		\includegraphics[width=.5\textwidth]{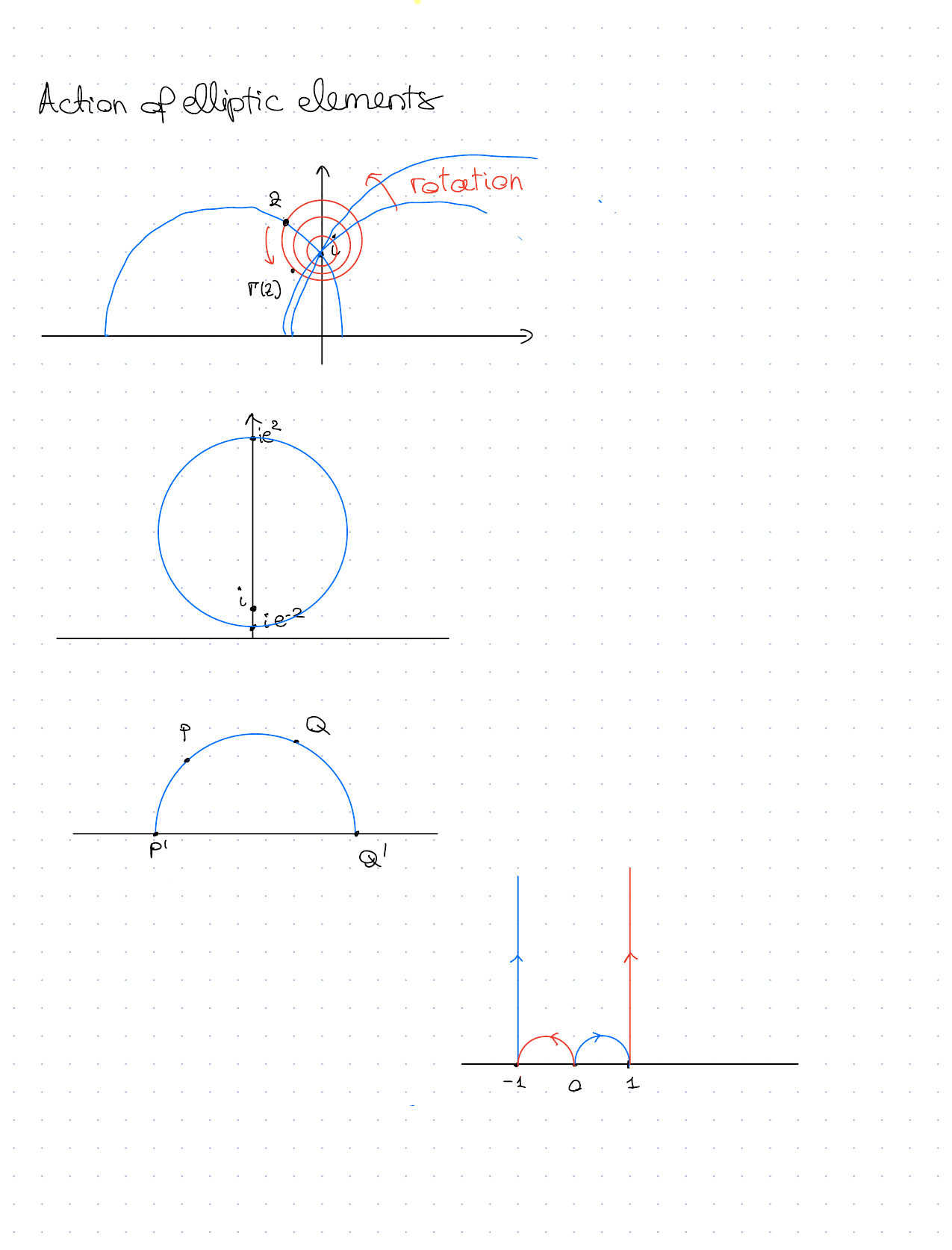}
		\caption{The hyperbolic circle of center $i$ and radius $2$.}\label{fig:hyp-circle}
	\end{figure}
\end{example}

\subsection{Clines}

\begin{definition}
	A \textit{cline} is a Euclidean circle or line. Any cline can be described algebraically as the zero locus of the following expression
	$$
	c z \overline{z} + \alpha z + \overline{\alpha}\overline{z} + d=0
	$$
	where $\alpha$ is a complex constant, and $c,d$
	are real numbers.
\end{definition}

If $c=0$, the cline is a line, while for $c\neq 0$ it is a circle of center $z_0$ and radius $r$ where
$$
z_0=\left(-\frac{\Re{\alpha}}{c},\frac{\Im{\alpha}}{c}\right), \quad
r=\sqrt{\frac{\alpha \overline\alpha}{c^2}}
$$

\begin{definition}
	A horocycle is a Euclidean circle in $\mathbb H$ tangent to the absolute.
\end{definition}

There are two types of horocycles in $\mathbb H$: the ones based at infinity, which are horizontal lines (so these are infinite Euclidean circles tangent at $\infty$) and the ones based at points on the real line, which are standard Euclidean circles tangent to the real line.
Given a horocycle, any geodesic originating at the point of tangency (interpreted broadly including the point at infinity in the case of a horizontal line) intersects the horocycle orthogonally in $\mathbb H$.

\begin{definition}
	Given a geodesic $g$ in $\mathbb H$, we say that a cline $l$ is hyperbolically parallel to $g$ if its points are at constant hyperbolic distance from $g$.
\end{definition}

\begin{example}\label{ex:parallel}
	Consider a vertical geodesic $g$ that crosses the real line at a point $x_0$. Then any Euclidean line $l_0$ originating at $x_0$ is hyperbolically parallel to $g$, namely all points on $l_0$ have the same distance from $g$. This is displayed in   
    \Cref{fig:hyp-parallel}.
	
	\begin{figure}[!h]
		\centering
		\includegraphics[width=\textwidth]{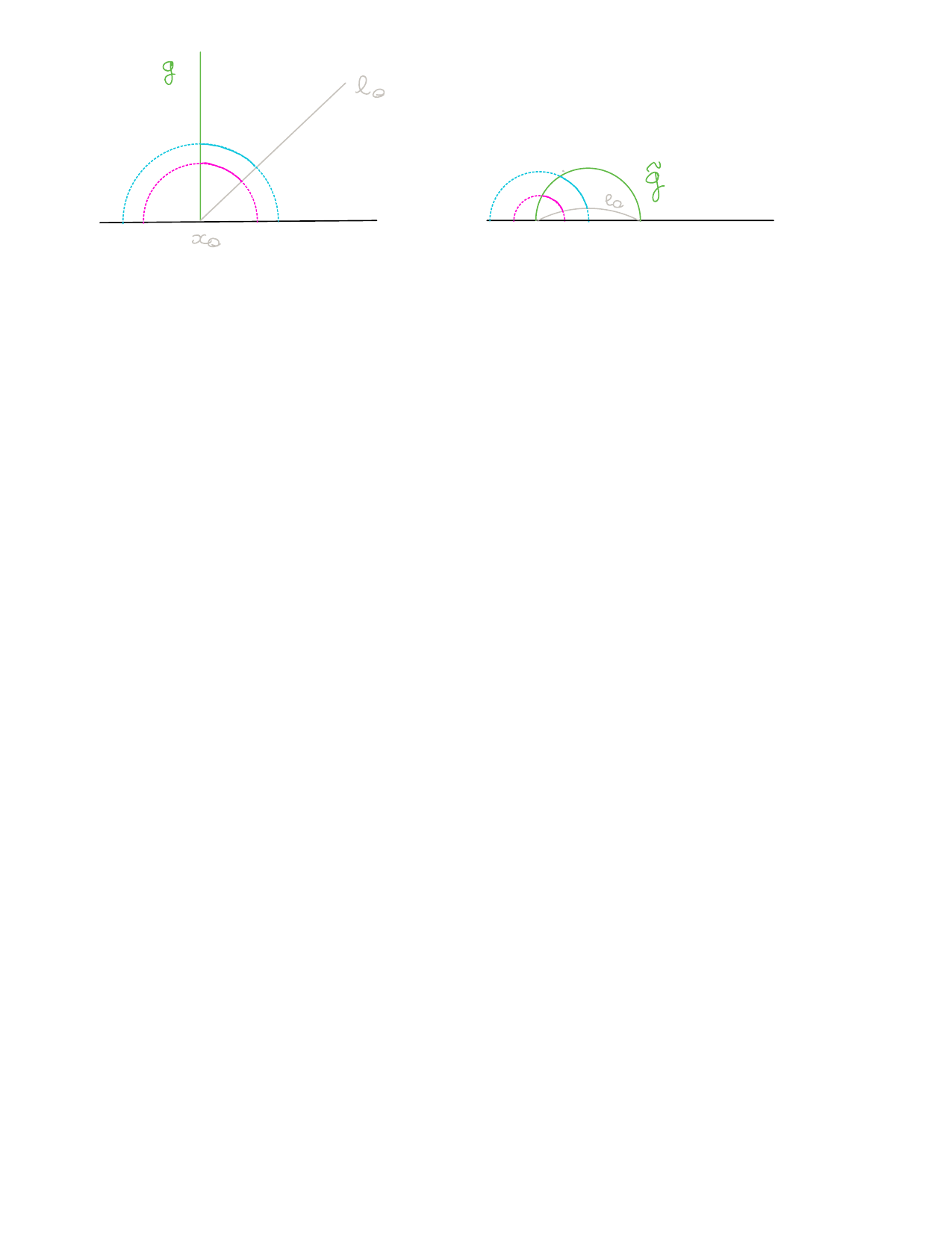}
		\caption{On the left, a given a vertical geodesic $g$, the line $l_0$ is hyperbolically parallel to it. Namely any two geodesic arcs (in solid pink and solid cyan) orthogonal to $g$ and $l_0$ have the same length. On the right we display the same for a geodesic $\tilde g$ given by a half circle.}\label{fig:hyp-parallel}
	\end{figure}
\end{example}


\subsection{Action of $\mathbb P SL_2(\mathbb R)$ on $\mathbb H$}
The set of analytic bijections $\mathbb H\to\mathbb H$ coincides with $\mathbb P SL_2(\mathbb R)$ and acts by 
fractional linear transformations, namely  conformal 
isometries of $\mathbb H$ (see for example \cite{Ser}). In particular, fractional linear transformations map clines to clines and horocycles to horocycles. 

We will denote by  $\gamma$ a generic matrix in $SL_2(\mathbb R)$ and by $\gamma(z)$ the corresponding fractional linear transformation:
$$
\gamma=\left(\begin{array}{cc}
	a & b \\
	c& d
\end{array}\right),\quad \gamma(z):=\frac{a z+b}{c z+ d}, \quad a d - b c=1.
$$
Note that operations on matrices $\gamma$ such as multiplication, conjugation, etc. can be mapped to operations on $\gamma(z)$ as compositions. However, the action of $\gamma(z)$ is obviously non-linear.

\begin{example}
	Affine transformations
	$$
	\gamma(z)=z+b, \quad \gamma=\left(\begin{array}{cc}
		1 & b \\
		0& 1
	\end{array}\right)
	$$
	and inversions
	$$
	\gamma(z)=\frac{1}{z}, \quad \gamma=\left(\begin{array}{cc}
		0 & 1 \\
		1& 0
	\end{array}\right),
	$$
	are examples of fractional linear transformations.
\end{example}

\begin{example}\label{ex:parallel1}
	\noindent Since any geodesic $\tilde g$ can be mapped to a vertical geodesic $g$ by an element of $\gamma\in\mathbb P SL_2(\mathbb R)$, then if $l_0$ is hyperbolically parallel to $g$, then $\gamma^{-1}(l_0)$ is 
	hyperbolically parallel to $\tilde g$. Note that since $l_0$ is a Euclidean line, then
	$\gamma^{-1}(l_0)$ must be a cline. In particular if $\tilde g$ is a half-circle geodesic, then  $\gamma^{-1}(l_0)$ is the portion in $\mathbb H$ of a Euclidean circle through the points of intersection of $\tilde g$  with the real axis, see right hand side of \Cref{fig:hyp-parallel}.
\end{example}

\begin{definition}
	Given $\gamma\in SL_2(\mathbb R)$, the corresponding element in 
	$\mathbb P SL_2(\mathbb R)$ is called 
	\begin{equation*}
		\begin{array}{cc}
			\text{hyperbolic} & \text{if }|\text{Tr}(\gamma)|   
			>2, \\
			\text{parabolic} & \text{if }|\text{Tr}(\gamma)|  =2,\\
			\text{elliptic} & \text{if }|\text{Tr}(\gamma)|  <2. \\
		\end{array}
	\end{equation*}
\end{definition}

\begin{lemma}\label{lm:hyp-par-el}
	Any $\gamma\in \mathbb PSL_2(\mathbb R)$ admits at most two fixed points. In particular
	\begin{align*}
		\gamma \text{ hyperbolic }\Rightarrow         \text{ two fixed points in } \partial\mathbb H,\\
		\gamma \text{ parabolic }\Rightarrow         \text{ one fixed point in } \partial\mathbb H,\\
		\gamma \text{ elliptic }\Rightarrow         \text{ one fixed point in } \mathbb H
	\end{align*}
\end{lemma}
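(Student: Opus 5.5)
We reduce the fixed point equation to a quadratic and read off the three cases from its discriminant. Let $\gamma$ be represented by $\begin{pmatrix} a&b\\ c&d\end{pmatrix}\in SL_2(\mathbb R)$, taken up to an overall sign; this sign does not change $|\mathrm{Tr}(\gamma)|$ and so does not affect the trichotomy. A point $z\in\mathbb P^1$ is fixed by $\gamma(z)=\frac{az+b}{cz+d}$ exactly when the vector $(z:1)$, or $(1:0)$ for $z=\infty$, is an eigenvector of the matrix. Equivalently, for finite $z$ we need
\[
c z^2+(d-a)z-b=0 .
\]
The key step is the discriminant of this quadratic. Using $ad-bc=1$ we get
\[
\Delta=(d-a)^2+4bc=(a+d)^2-4=\mathrm{Tr}(\gamma)^2-4 .
\]
This is where the three cases of the definition enter.

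First suppose $c\neq0$. The quadratic is genuinely of degree two, so there are at most two fixed points, and $\infty$ is not fixed because $\gamma(\infty)=a/c$. If $|\mathrm{Tr}(\gamma)|>2$, then $\Delta>0$ and there are two distinct real roots, i.e. two fixed points on $\partial\mathbb H$. If $|\mathrm{Tr}(\gamma)|=2$, there is one double real root, so exactly one fixed point on $\partial\mathbb H$. If $|\mathrm{Tr}(\gamma)|<2$, the roots form a complex conjugate pair with nonzero imaginary part. Exactly one of them lies in $\mathbb H$ and the other lies in the lower half plane, so $\gamma$ has one fixed point in $\mathbb H$.

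Now suppose $c=0$. Then $\infty$ is fixed and $ad=1$, so the equation reduces to the linear equation $(d-a)z=b$. If $a\neq d$, this gives one finite real fixed point $b/(d-a)$ besides $\infty$, for two fixed points on $\partial\mathbb H$. In that case $a\neq\pm1$, so $|\mathrm{Tr}(\gamma)|=|a+1/a|>2$ and $\gamma$ is hyperbolic, as required. If $a=d$, then $a=\pm1$ and $|\mathrm{Tr}(\gamma)|=2$. If moreover $b\neq0$, the only fixed point is $\infty$, matching the parabolic case. If $b=0$ as well, $\gamma$ is the identity in $\mathbb P SL_2(\mathbb R)$, which fixes everything; it must be excluded in the statement ("any non-identity $\gamma$"). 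There is no elliptic case when $c=0$, since then $|a+1/a|\ge2$.

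We expect the main obstacle to be this bookkeeping at $\infty$ together with the degenerate identity element, rather than any conceptual difficulty. The one genuine geometric input is the elliptic case: there one must check that the complex root lies in $\mathbb H$ itself and not merely off the real line. This follows because the roots $\frac{a-d\pm i\sqrt{4-\mathrm{Tr}(\gamma)^2}}{2c}$ are conjugate, so exactly one of them has positive imaginary part.
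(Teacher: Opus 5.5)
Your proof is correct and follows essentially the same route as the paper. The paper also splits into the cases $c=0$ and $c\neq 0$: it reads off the fixed points $b/(d-a)$ and $\infty$ in the first case, and the roots $\frac{a-d\pm\sqrt{(a+d)^2-4}}{2c}$ in the second, sorting them by the sign of $(a+d)^2-4$. Your explicit discriminant computation, and your observation that the identity must be excluded (it has $|\mathrm{Tr}|=2$ but fixes every point), make precise something the paper leaves implicit when it says that a parabolic element with $c=0$ is a translation $z\mapsto z+b$ fixing only $\infty$.
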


\begin{proof}
	Given an element $ \gamma(z)=\frac{a z+b}{c z+ d}$, observe that if $c=0$, $\gamma$ cannot be elliptic because we need $a d - b c=1$. If it is parabolic, then it is a translation, $\gamma(z)=z+b$, the only fixed point being $\infty$. If it is hyperbolic, then it admits the following fixed points:
	$$
	x_1=\frac{b}{d-a},\quad x_2=\infty.
	$$
	Let's now assume $c\neq 0$, then imposing that $x_1,x_2$ are fixed points, we obtain 
	$$
	x_{1} = \frac{a-d-\sqrt{(a+d)^2-4}}{2 c},\quad
	x_{2} = \frac{a-d+\sqrt{(a+d)^2-4}}{2 c}.
	$$
	For parabolic elements, $x_1=x_2\in\mathbb R$, for hyperbolic elements $x_1\neq x_2$ and both are real, while for elliptic elements, $x_1=\bar x_2$, so that only one of these is in $\mathbb H$.
\end{proof}

\subsubsection{Action of elliptic elements on $\mathbb H$}
Any elliptic element $\gamma_e$ acts by hyperbolic rotation in a neighborhood around its fixed point in $\mathbb H$. To see this, denote the fixed point by $z_0$. For any other point $P\in \mathbb H$, let $\rho$ be the hyperbolic distance between $P$ and $z_0$. Then $\gamma_e(P)$ must lie on the hyperbolic circle of center $z_0$ and radius $\rho$ as $\gamma_e$ is an isometry. Moreover, because $\gamma_e$ is also conformal, the angle between any two points on the same circle centered at $z_0$ must be preserved.

\begin{example}
	Consider the following
	$$
	\gamma_e(z)=\frac{z+\sqrt{3}}{1-\sqrt{3}z}
	$$
	then its fixed point in $\mathbb H$ is $i$. 
	The action of $\gamma_e$ is given in \Cref{fig:elliptic}.
	\begin{figure}[!htb]
		\centering
		\includegraphics[width=.5\textwidth]{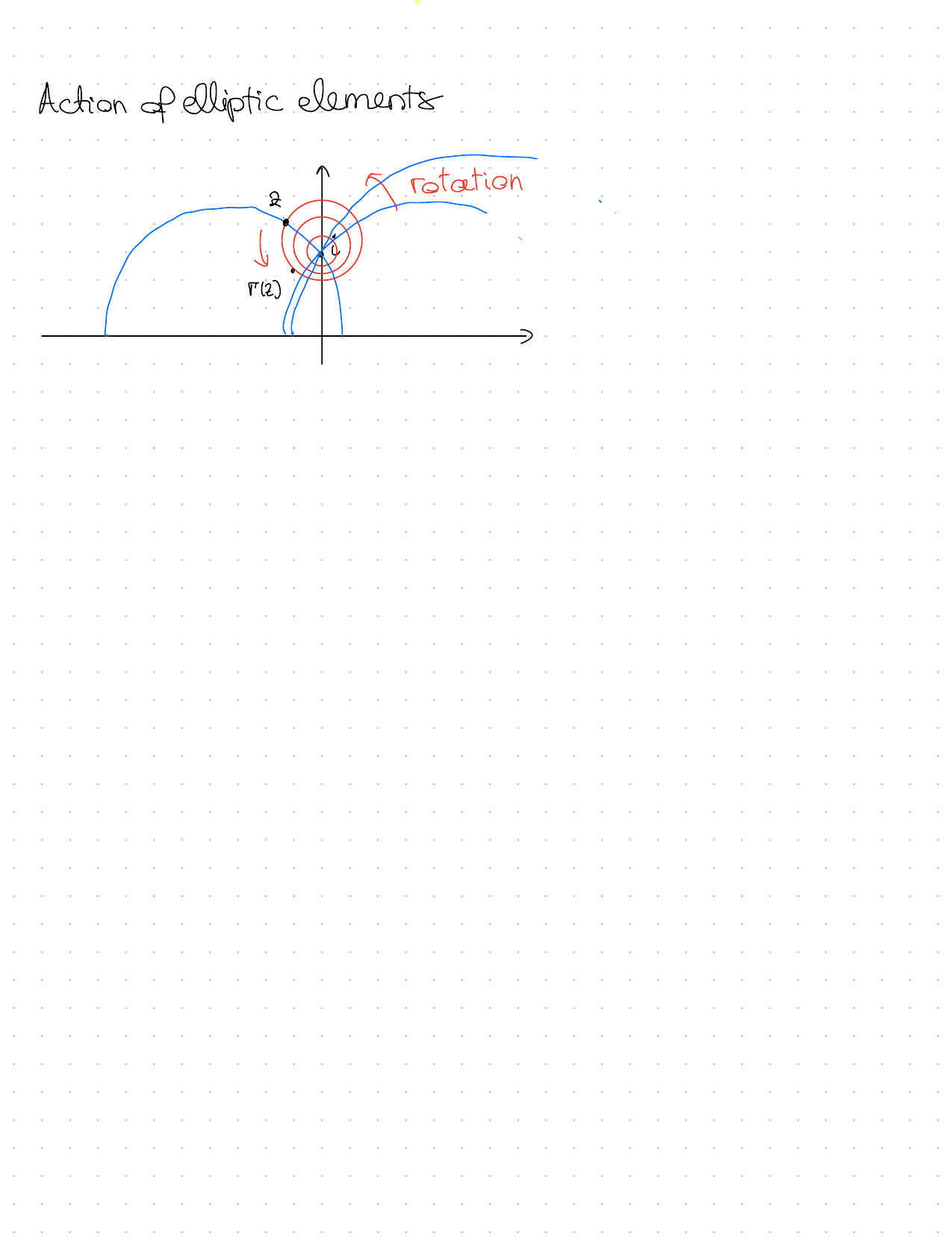}
		\caption{The action of the elliptic element $\gamma_e$.}\label{fig:elliptic}
	\end{figure}
\end{example}

\subsubsection{Action of parabolic elements}\label{suseA:para}

For parabolic elements, any horocycle at the fixed point is invariant. Indeed, we can always apply a transformation in $\mathbb PSL(2,\mathbb R)$ to map a parabolic element to one of the form $\gamma_p(z)= z+r$, where $r\in\mathbb R$. Horocycles based at the fixed point $\infty$ are horizontal lines which are simply translated under the action of $\gamma_p$. 
This shows that parabolic elements generically act either as translations or by horolations, i.e. rotations along the horocycles at the fixed points. 

\begin{lemma}\label{lm:hor-par}
  Let $x \in \mathbb{R}$, then there exists a one-parameter subgroup $U_x \subset {PSL}_2(\mathbb{R})$
 of parabolic elements (together with the identity)
 \[
    U_x = \left\{
        P_t :=
        \begin{pmatrix} 1 - tx & tx^2 \\ -t & 1 + tx \end{pmatrix}
        : t \in \mathbb{R}
    \right\},
\]
each of which fixes $x$
and preserves every horocycle tangent to $\mathbb{R}$ at $x$. 
\end{lemma}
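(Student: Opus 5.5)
The plan is to reduce to the case $x=\infty$ by conjugation and then check the claims by direct computation. First I will verify the algebraic facts about $P_t$ itself. A direct computation gives $\det P_t=(1-tx)(1+tx)+t^2x^2=1$ and $\operatorname{Tr}(P_t)=2$, so every $P_t$ with $t\neq 0$ is parabolic in the sense of the trace classification, and $P_0=\mathbb I$. The fixed-point equation $\frac{(1-tx)z+tx^2}{-tz+1+tx}=z$ simplifies to $t(z-x)^2=0$. Hence for $t\neq0$ the point $x$ is the unique fixed point, in agreement with Lemma \ref{lm:hyp-par-el}.

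For the subgroup property, I will write $P_t=\mathbb I+tN$ with
$$
N=\begin{pmatrix} -x & x^2\\ -1 & x\end{pmatrix},\qquad N^2=0 .
$$
Then $P_tP_s=\mathbb I+(t+s)N=P_{t+s}$, so $t\mapsto P_t$ is a homomorphism from $(\mathbb R,+)$, and $U_x$ is a one-parameter subgroup. Because $P_t\neq -P_s$ for $t\neq s$, it injects into $\mathbb PSL_2(\mathbb R)$.

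For the horocycles, I will conjugate by $g(z)=-1/(z-x)$, i.e. $g=\begin{pmatrix}0&-1\\1&-x\end{pmatrix}\in SL_2(\mathbb R)$, which sends $x$ to $\infty$. The key step is to compute $gP_tg^{-1}$. I expect this to be the upper triangular unipotent matrix $\begin{pmatrix}1&t\\0&1\end{pmatrix}$, up to the sign convention on $t$; equivalently, $gNg^{-1}$ should be a multiple of $E_{12}$. The identity $g(P_t(z))=g(z)+t$ confirms this and can be checked directly: $P_t(z)-x=\frac{z-x}{1-t(z-x)}$, so $-1/(P_t(z)-x)=-1/(z-x)+t$.

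Finally, fractional linear transformations map horocycles based at $x$ bijectively onto horocycles based at $g(x)=\infty$, which are horizontal lines $\operatorname{Im} w=c$ (see Subsection \ref{suseA:para}). These lines are invariant under the translation $w\mapsto w+t$. Pulling back by $g$, each horocycle tangent to $\mathbb R$ at $x$ is preserved by $P_t$.

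The only real obstacle is this conjugation bookkeeping, namely getting the sign and direction right. The identity $P_t(z)-x=\frac{z-x}{1-t(z-x)}$ settles it cleanly, so I will present the argument through that identity.
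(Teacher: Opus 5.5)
Your argument is correct and is essentially the paper's. The paper states the lemma without a separate proof; the paragraph just before it conjugates a parabolic element to a translation $z\mapsto z+r$ fixing $\infty$ and notes that horizontal lines, the horocycles at $\infty$, are preserved. Your conjugation by $g(z)=-1/(z-x)$ does exactly this, giving $gP_tg^{-1}=\begin{pmatrix}1&t\\0&1\end{pmatrix}$ with no sign ambiguity. Your further checks ($\det P_t=1$, $\operatorname{Tr}P_t=2$, $P_t=\mathbb I+tN$ with $N^2=0$, and the fixed-point equation $t(z-x)^2=0$) supply the subgroup and fixed-point details that the paper leaves implicit.
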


\subsubsection{Action of hyperbolic elements on $\mathbb H$}
Since fractional linear transformations map clines to clines, any Euclidean circle through two fixed points of a hyperbolic element $\gamma$ is preserved.
This gives  qualitative information about  $\gamma(z)$; we take the unique cline containing $z$ and the two fixed points, then $\gamma(z)$ belongs to the same cline, as in \Cref{fig:clines}.
\begin{figure}[!htb]
	\centering
	\includegraphics[width=.5\textwidth]{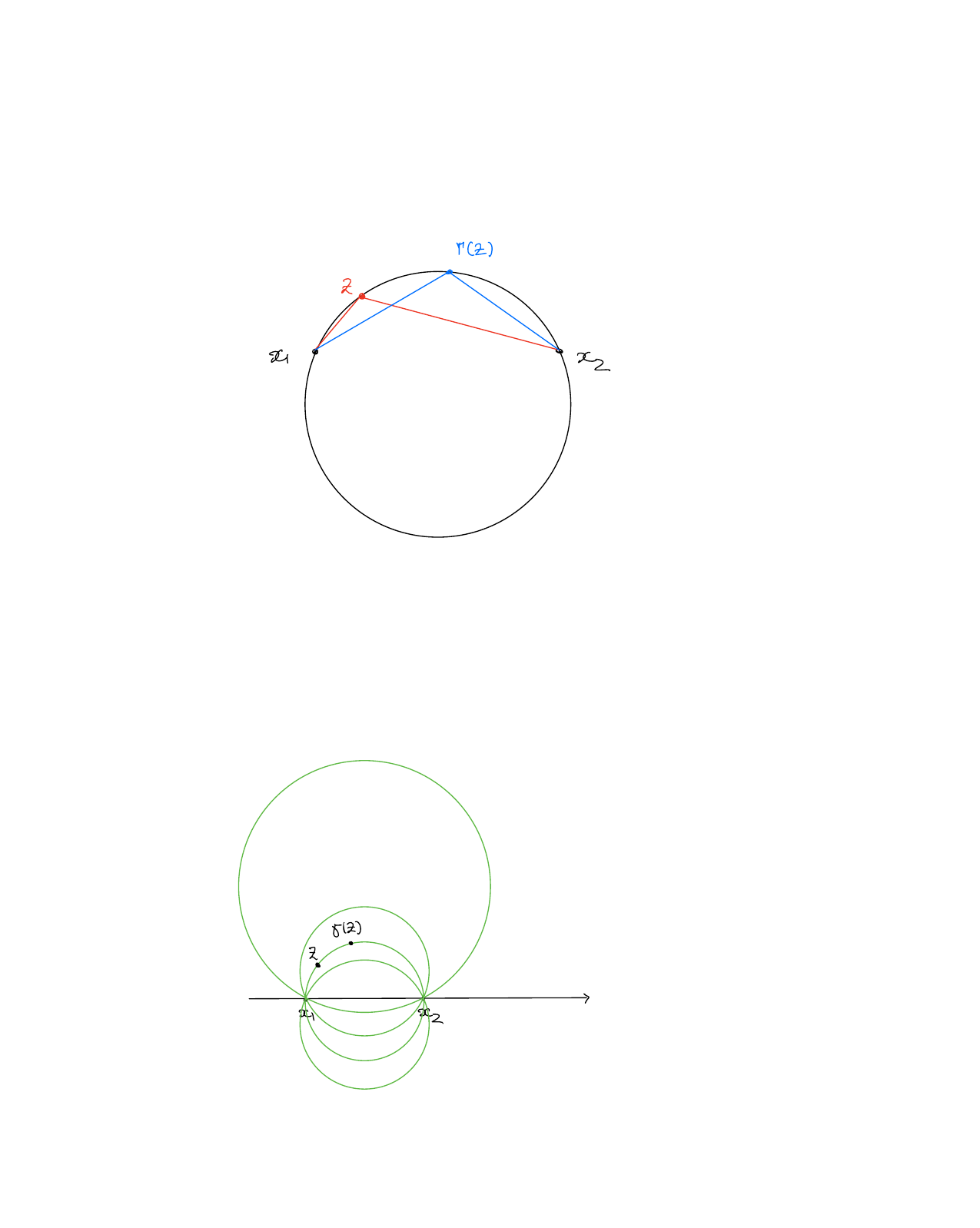}
	\caption{Various clines through $x_1$ and $x_2$, fixed points under $\gamma$.}\label{fig:clines}
\end{figure}

In particular, this implies that the geodesic connecting $x_1$ and $x_2$ is invariant under the action of $\gamma$. This geodesic is called the \textit{invariant axis}.

Given a hyperbolic element $\gamma(z)=\frac{a z+b}{c z+ d}$ with $c\neq 0$, and fixed points $x_1, x_2$, the cross ratio
\begin{equation}\label{eq:conf-rel}
	\textrm{cr}(\gamma(z),z;x_1,x_2):= \frac{\gamma(z)-x_1}{\gamma(z)-x_2}\frac{z-x_2}{z-x_1},
\end{equation}
is real, does not depend on $z$ and coincides with the ratio of the eigenvalues of $\gamma$:
\begin{equation}\label{eq:rr}
	\textrm{cr}(\gamma(z),z;x_1,x_2)= \frac{a+d+\sqrt{(a+d)^2-4}}{a+d-\sqrt{(a+d)^2-4}}=
	\frac{c x_2+d}{c x_1+d}.
\end{equation}
For $c=0$, one of the fixed points of $\gamma$ is $\infty$. In that case, 
\begin{equation}\label{eq:conf-rel1}
	\textrm{cr}(\gamma(z),z;x_1,\infty):= \frac{\gamma(z)-x_1}{z-x_1}=a^2.
\end{equation}

The action of hyperbolic elements on geodesics is very simple: geodesics are mapped to geodesics. The intersection points of a given geodesic with the absolute are mapped to the intersection points of the image geodesic with the absolute.

Let us see how hyperbolic elements act on horocycles. By the action of $\mathbb PSL_2(\mathbb R)$ we may bring any hyperbolic element to the form  $\gamma(z)= a z +b$ and 
assume that the horocycle is tangent to the real axis at the point $b/(1-a)$. Take the intersection of the horocycle with the vertical geodesic originating at $b/(1-a)$. This will be a point of the form 
$b/(1-a)+ i \alpha$. Then $\gamma$ will map $b/(1-a)$ to itself and 
$b/(1-a)+ i \alpha$ to $b/(1-a)+ i a \alpha$. Hence, if $a>1$, the horocycle dilates, and if $a<1$ it contracts.


\subsection{Hyperbolic polygons}
A polygon in $\overline{\mathbb H}$ is a closed  region whose boundary is given by a finite set of geodesic segments called edges. Let $k$ denote the number of edges and denote by $\alpha_1,\dots,\alpha_k$ the internal angles, then the Gauss–Bonnet formula gives us the area of the polygon as
\begin{equation}\label{eq:GB}
	(k-2)\pi -\sum_{j=1}^k \alpha_j.
\end{equation}
A polygon is called ideal if all of its vertices belong to the absolute $\partial\mathbb H$. In this case, all internal angles $\alpha_j$ are $0$.

\subsection{Fuchsian groups and their fundamental domains}

Discrete subgroups of $\mathbb PSL(2,\mathbb R)$ are called Fuchsian. In $\mathbb PSL(2,\mathbb R)$, any discrete group $G$ acts properly discontinuously, namely for any $K\subset \mathbb H$ compact,
the set
$$
\{g\in G| g(\dot K) \cap \dot K\neq\emptyset \},
$$
where $\dot K$ denotes the interior of $K$, is finite. 

Note that a Fuchsian group acts freely on $\mathbb H$ iff it does not contain elliptic elements. 

\begin{definition}\label{def:fun-d}
	If $G$ is a Fuchsian group, a subset $R\subset\mathbb H$ is a fundamental domain for $G$ if
	\begin{enumerate}
		\item $\gamma(R)\cap R = \emptyset\, \forall \gamma\in G\setminus\{id\}$,
		\item $\forall z\in\mathbb H, \, \exists \gamma\in G$ such that $\gamma(z)\in R$. 
	\end{enumerate}
\end{definition}

\begin{example}\label{ex:gamma1}
	Consider the following hyperbolic element
	$$
	\gamma_1(z):= \frac{z}{3}-2,
	$$
	it has fixed points $ q^{(1)}_1=-3, \, q^{(1)}_2=\infty$, and the invariant axis is the vertical line connecting $q^{(1)}_1$ to $\infty$, see \Cref{fig:gamma1}.
	\begin{figure}[!htb]
		\centering
		\includegraphics[width=.7\textwidth]{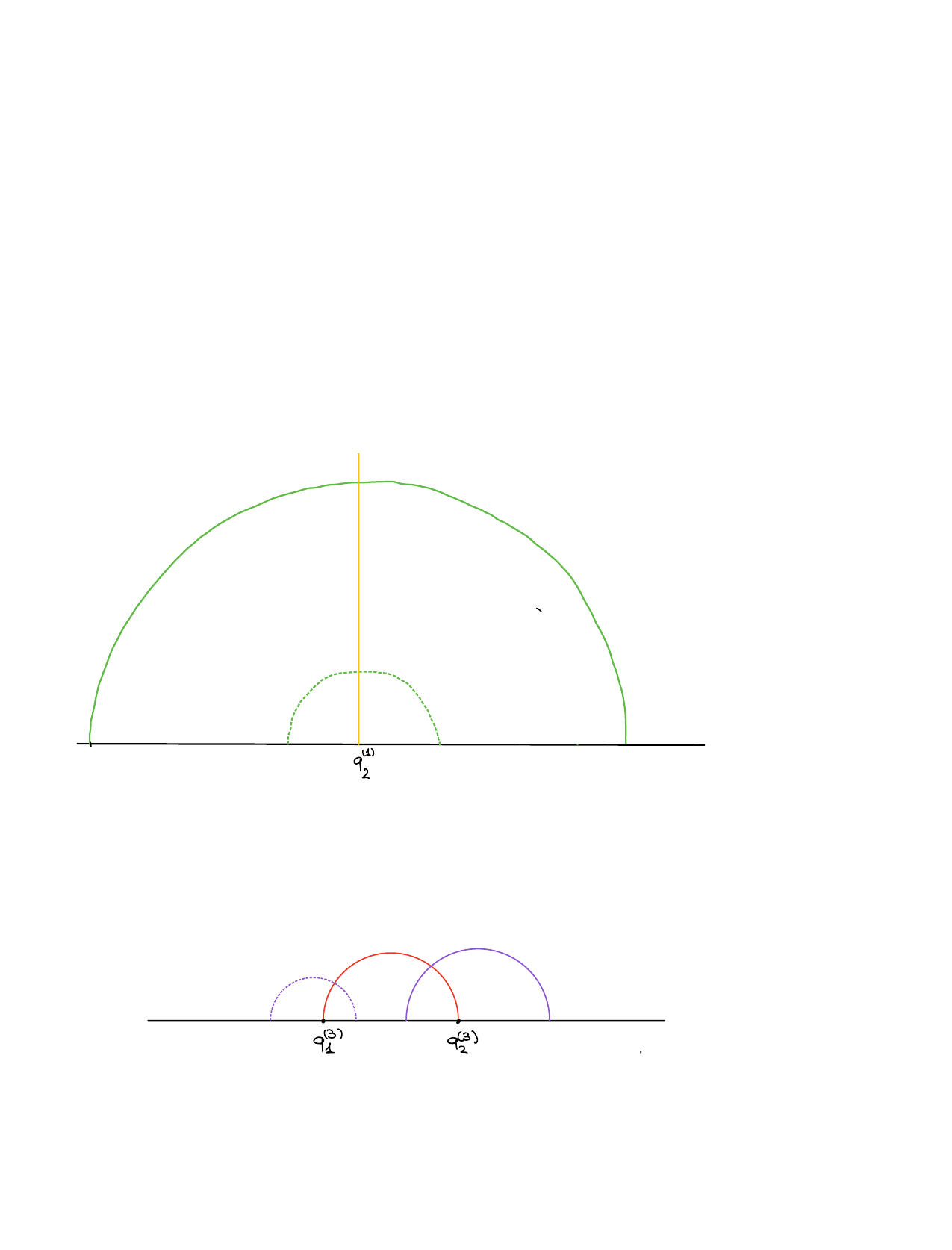}
		\caption{The action of $\gamma_1$.}\label{fig:gamma1}
	\end{figure}
	To pick a fundamental domain, we select any geodesic orthogonal to the invariant axis, for example the geodesic $g_1$ centered at $q^{(1)}_1$ with radius $\sqrt{14}$. To see how this is mapped by $\gamma_1$, we calculate the intersection points of $g_1$ with the absolute and calculate their image under $\gamma_1$. We obtain 
	$\gamma_1(g_1)=g_2$, where $g_2$ is a geodesic centered at $0$ with radius $\sqrt{14}/3$. So $\gamma_1$ maps all points under $g_1$ to points in the strip between $g_1$ and $g_2$, moreover, it maps the points in this strip to points under the geodesic $g_2$. 
	Therefore the fundamental domain is the strip contained between the two green geodesics (dashed and solid).
	Observe that points that are on the left (right) of the invariant axis will remain on the left (right) under the action of $\gamma_1$.
\end{example}

\begin{example}\label{ex:gamma2}
	Consider the following hyperbolic element
	$$
	\gamma_3(z):=-\frac{ z+2}{3 z+4},
	$$
	then its fixed points are $q^{(3)}_1=-1,\, q^{(3)}_2=-\frac{2}{3}$.
	We pick a geodesic that is orthogonal to the invariant axis, for example the purple one in \Cref{fig-gamma3}. Then consider its image under $\gamma_3$, which is given by the dashed purple geodesic. 
	\begin{figure}[!htb]
		\centering
		\includegraphics[width=.9\textwidth]{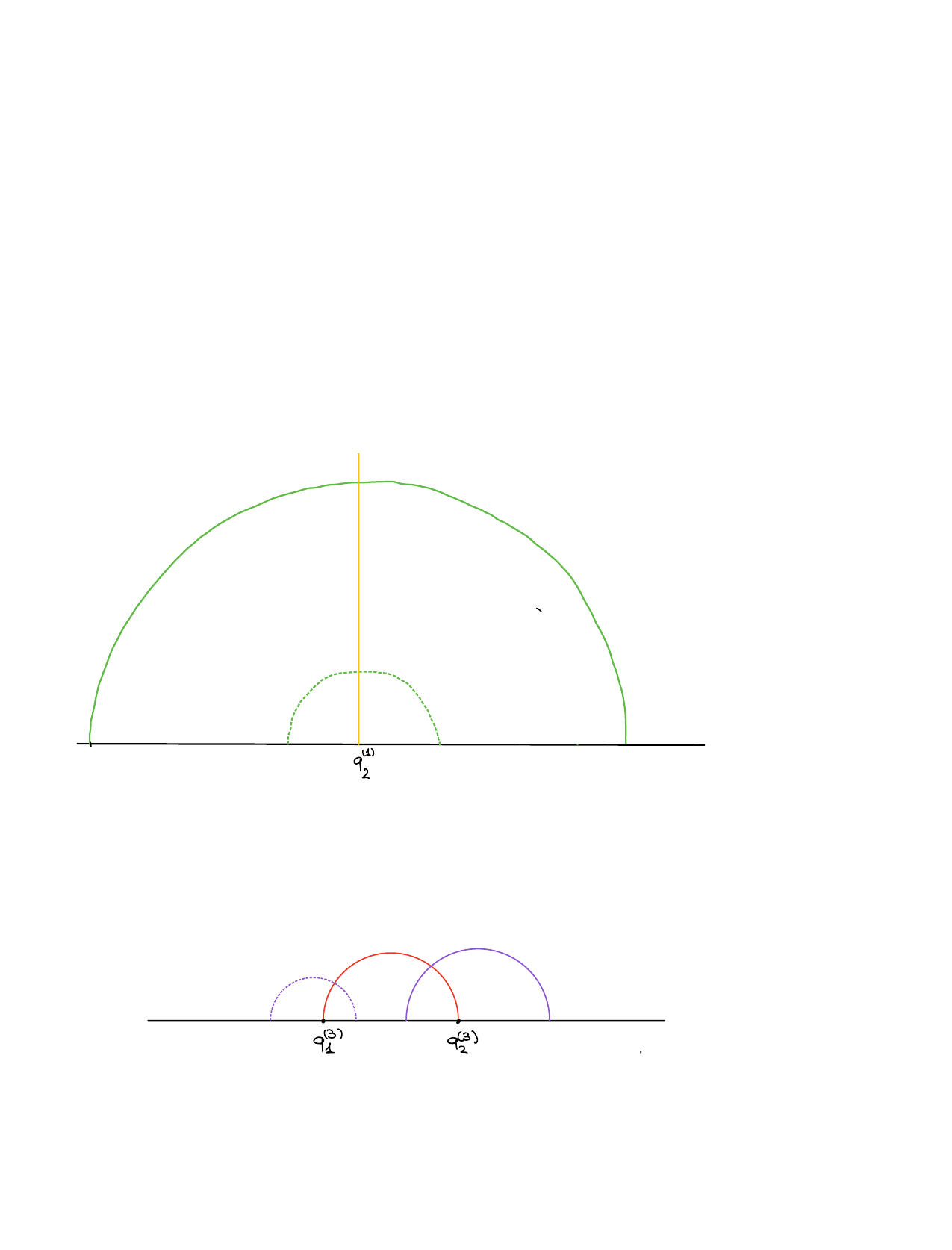}
		\caption{The action of $\gamma_3$.}\label{fig-gamma3}
	\end{figure}
	The region under the solid purple geodesic is mapped to the region above the dashed purple one. A fundamental domain for the hyperbolic element $\gamma_3(z)$ is the portion of the upper half plane above the solid and dashed purple geodesics in \Cref{fig-gamma3}.
\end{example}

\end{document}